\newtheorem{theorem}{Theorem}[section]    
\newtheorem{lemma}[theorem]{Lemma}          
\newtheorem{proposition}[theorem]{Proposition}
\newtheorem{corollary}[theorem]{Corollary} 
\theoremstyle{definition}
\newtheorem{remark}[theorem]{Remark}
\newtheorem{example}[theorem]{Example}    
\numberwithin{equation}{section}
\newcommand{\F}{\mathcal F_{ob} }
\newcommand{\Int}{\textrm{Int}}
\newcommand{\mF}{\mathcal{F}}
\newcommand{\R}{\mathbb{R}}
\newcommand{\Z}{\mathbb{Z}}
\newcommand{\sgn}{{\tt sgn}}
\title[Self-linking number and braid index]{On a relation between the self-linking number and the braid index of closed braids in open books}
\author{Tetsuya Ito}
\address{Research Institute for Mathematical Sciences, Kyoto university, Kyoto, 606-8502, Japan}
\email{tetitoh@kurims.kyoto-u.ac.jp}
\urladdr{http://www.kurims.kyoto-u.ac.jp/~tetitoh/}
\subjclass[2010]{Primary~57M27, Secondary~53D35,57R17}
\keywords{self-linking number, closed braid, open book foliation, Jones-Kawamuro conjecture}
\date{\today} 
\begin{document}

\begin{abstract}
We prove a generalization of the Jones-Kawamuro conjecture that relates the self-linking number and the braid index of closed braids, for planar open books with certain additional conditions and modifications. We show that our result is optimal in some sense by giving several counter examples for naive generalizations of the Jones-Kawamuro conjecture.
\end{abstract}

\maketitle

\section{Introduction}

In a seminal paper \cite{j}, V. Jones observed formulae that relate the HOMFLY polynomial to the Alexander polynomial and the algebraic linking number (exponent sum) for closed 3- and 4-braids \cite[(8.4) and (8.10)] {j}. This leads him to write {\it ``Formulae (8.4) and (8.10) lend some weight to the possibility that the exponent sum in a minimal braid representation is a knot invariant''.}

This question, whether the algebraic linking number yields a topological knot invariant when a knot is represented as a closed braid of the minimal braid index, is later called \emph{Jones' conjecture}. In \cite{k1} K. Kawamuro proposed a generalization of Jones' conjecture which we call the \emph{Jones-Kawamuro conjecture}:
if two closed braids $\widehat{\alpha}$ and $\widehat{\beta}$ represent the same oriented link $L$, the inequality
\begin{equation}
\label{eqn:JKconj}
 |w(\widehat{\alpha})-w(\widehat{\beta})| \leq n(\widehat{\alpha}) + n(\widehat{\beta}) -2 b(L)
\end{equation}
holds. Here $w$ and $n$ denotes the algebraic linking number and the braid index of a closed braid, and $b(L)$ is the minimal braid index of $L$, the minimum number of strands needed to represent $L$ as a closed braid. Recently, the Jones-Kawamuro conjecture (\ref{eqn:JKconj}) was solved affirmatively by Dynnikov-Prasolov \cite{dp} and LaFountain-Menasco \cite{lm}, by different but related methods.

By Bennequin's formula $sl(\widehat{\alpha})= w(\widehat{\alpha})-n(\widehat{\alpha})$ of the self-linking number of a closed braid \cite{be}, the inequality (\ref{eqn:JKconj}) implies
\begin{equation}
\label{eqn:JKconj2} 
|sl(\widehat{\alpha})-sl(\widehat{\beta})| \leq 2 (\max \{n(\widehat{\alpha}),n(\widehat{\beta})\} -b(L)). 
\end{equation}
Thus, in a point of view of contact geometry, the Jones-Kawamuro conjecture can be understood as an interaction between the self-linking number and the braid indices. In particular, Jones' conjecture states a surprising phenomenon that the self-linking number, the most fundamental \emph{transverse} knot invariant, yields a \emph{topological} knot invariant when it attains the minimal braid index.

In this paper we prove a generalization of the Jones-Kawamuro conjecture for planar open books, under some additional assumptions and conditions. Our main theorem includes the original Jones-Kawamuro conjecture as its special case, and provides an optimal generalization of the Jones-Kawamuro conjecture for general open books and closed braids, in some sense.

To state our main theorem, we first set up notations.
Let $(S,\phi)$ be an open book decomposition of a contact 3-manifold $(M,\xi)=(M_{(S,\phi)},\xi_{(S,\phi)})$ with respect to the Giroux correspondence \cite{gi}, and let $B$ be the binding. 
An oriented link $L$ in $M-B$ is a \emph{closed braid} (with respect to $(S,\phi)$), if $L$ is positively transverse to each page. The number of the intersections with $L$ and a page $S$ is denoted by $n(L)$ and called the \emph{braid index} of $L$.

By cutting $M$ along the page $S_{0}$, $L$ gives rise to an element $\alpha$ of $B_{n(L)}(S)$, the $n(L)$-strand braid group of the surface $S$. We say that $L$ is a \emph{closure} of $\alpha$, and denote by $L=\widehat{\alpha}$. Throughout the paper, we will fix a page $S_0$ and always see a closed braid as the closure of a braid.  

A closed braid is regarded as a transverse link in the contact 3-manifold $(M,\xi)$. For a null-homologous transverse link $L$ with Seifert surface $\Sigma$, we denote the self-linking number of $L$ with respect to $[\Sigma] \in H_{2}(M,L)$ by $sl(L,[\Sigma])$. To make notation simpler, we will omit to write $[\Sigma]$.

Apparently, the Jones-Kawamuro conjecture, even for original Jones' conjecture, fails for general open books and closed braids. Here is the simplest counter example.

\begin{example}
\label{exam:counter}
Let $(A,T_{A}^{-1})$ be an annulus open book with negative twist monodromy.
As we have seen in \cite[Example 2.20]{ik1-1}, there is a closed 1-braid $\widehat{\alpha}$ which is a transverse push-off of the boundary of an overtwisted disc (which we call a \emph{tranverse overtwisted disc}), so $sl(\widehat{\alpha})=1$.
On the other hand, the meridian of a connected component of the binding is a closed 1-braid $\widehat{\beta}$ with $sl(\widehat{\beta}) = -1$.
(See Example \ref{example:a} for further discussion).
\end{example}

Since this counter example comes from an overtwisted disc,
one may first hope that an open book supporting a \emph{tight} contact structure satisfies the inequality (\ref{eqn:JKconj2}).
However, as the next example due to Baykur, Etnyre, Van Horn-Morris and Kawamuro, shows this is not true, even for an open book decomposition of the standard contact $S^{3}$.

\begin{example}
\label{exam:counterBEHK}
Let $(A,T_{A})$ be an annulus open book with positive twist monodromy, and $\rho \in B_{1}(A) \cong \pi_{1}(A) \cong \Z$ be a generator of the 1-strand group of an  annulus that turns $A$ once in counter clockwise direction. The closed 1-braid $\widehat{\rho^{2}}$ is an unknot with $sl(\widehat{\rho^{2}}) = -3$. (See Example \ref{example:movie} for how to see this).
\end{example}

In fact, as we will discuss in Section \ref{sec:counterexamples}, almost all open books have closed braids violating the inequality (\ref{eqn:JKconj2}). Thus, to get a reasonable generalization of the Jones-Kawamuro conjecture, we need to add some assumptions and modify the statement.

The first assumption and modification we adopt is a topological one concerning closed braids. We concentrate our attention for the case that a knot can across only one particular component of the binding. Let us fix a connected component $C$ of the binding $B$, which we call the \emph{distinguished binding component}.
We say two links $L_1$ and $L_2$ in $M_{(S,\phi)}-B$ are \emph {$C$-topologically isotopic} if they are topologically isotopic in $M-(B-C) =(M-B)\cup C$.
We define the \emph{minimal $C$-braid index} of $L$ by
\[ b_{C}(L)= \min\{n(\widehat{\beta})\: | \: \widehat{\beta} \textrm{ is } C\textrm{-topologically isotopic to }L \}.\]
As we will see in Corollary \ref{cor:Markov}, two closed braids are $C$-topologically isotopic if and only if two closed braids are moved to the other by applying a sequence of braid isotopy and (de)stabilizations along the distinguished binding component $C$.

The second and the third assumptions we add concern the property of an open book. We consider the conditions
\begin{description}
\item[Planar] The page $S$ is planar.
\item[FDTC] The fractional Dehn twist coefficient (FDTC) along the distinguished binding $C$ satisfies $|c(\phi,C)| > 1$.
\end{description}

Here it is interesting to compare these two conditions with \cite[Corollary 1.2]{ik4} that states a planar open book $(S,\phi)$ with $c(\phi,C)>1$ for all $C \subset \partial S$ supports a tight contact structure.

Now our generalization of the Jones-Kawamuro conjecture is stated as follows:

\begin{theorem}[Generalization of the Jones-Kawamuro conjecture]
\label{theorem:main}
Let $(S,\phi)$ be an open book satisfying {\bf [Planar]} and {\bf [FDTC]} and $L \subset M_{(S,\phi)} -B$ be a null-homologous oriented link. If two closed braids $\widehat{\alpha}$ and $\widehat{\beta}$ are $C$-topologically isotopic to $L$, then the inequality 
\begin{equation}
\label{eqn:themain}
|sl(\widehat{\alpha})-sl(\widehat{\beta})| \leq 2( \max\{ n(\widehat{\alpha}),  n(\widehat{\beta}) \} - b_{C}(L))
\end{equation}
holds. 
\end{theorem}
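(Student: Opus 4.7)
The plan is to emulate the braid-foliation argument of LaFountain--Menasco \cite{lm} in the setting of planar open books, using the open book foliation machinery of \cite{ik1-1}. First, I would fix a minimal $C$-braid representative $\widehat{\gamma}$ with $n(\widehat{\gamma}) = b_C(L)$ and, using Corollary \ref{cor:Markov}, construct embedded annular cobordisms $A_\alpha, A_\beta \subset M - (B-C)$ with $\partial A_\alpha = \widehat{\alpha} \cup (-\widehat{\gamma})$ and $\partial A_\beta = \widehat{\beta} \cup (-\widehat{\gamma})$, each transverse to $C$. After putting each annulus in generic position with respect to the open book and equipping it with an open book foliation, standard simplifications (elimination of $c$-circles, bypass-like moves, tile exchanges) reduce the foliation to one built from a small list of singular tiles; the \textbf{[Planar]} hypothesis controls the tile combinatorics and essentially reduces the problem to a classical braid-foliation analysis, with extra information recording intersections with $C$.

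The central step is to show that this normalized foliation on $A_\alpha$ always admits either a \emph{change-of-foliation move} (an exchange move in braid-theoretic terms, preserving both $n$ and $sl$) or a destabilization along $C$, unless $\widehat{\alpha} = \widehat{\gamma}$. This is the open-book analogue of the ``Markov theorem without positive stabilization'' used by LaFountain--Menasco, and it is where the \textbf{[FDTC]} hypothesis $|c(\phi,C)|>1$ enters essentially: when the fractional Dehn twist coefficient along $C$ exceeds $1$ in absolute value, the $b$-arcs of the foliation incident to $C$ cannot form the configurations that would otherwise force the insertion of a positive stabilization. The counter-examples of Section \ref{sec:counterexamples} (including Examples \ref{exam:counter} and \ref{exam:counterBEHK}) show that the bound $|c(\phi,C)|>1$ is genuinely necessary, so it must appear as a sharp inequality in the combinatorial estimates controlling the tile types meeting $C$.

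Given such a reducing sequence from $\widehat{\alpha}$ to $\widehat{\gamma}$, of which exactly $n(\widehat{\alpha}) - b_C(L)$ moves are $C$-destabilizations, one tracks the exponent sum $w$: each destabilization changes $w$ by $\pm 1$, so $|w(\widehat{\alpha})-w(\widehat{\gamma})| \leq n(\widehat{\alpha}) - b_C(L)$, and analogously $|w(\widehat{\beta})-w(\widehat{\gamma})| \leq n(\widehat{\beta}) - b_C(L)$. Summing yields the $w$-version $|w(\widehat{\alpha})-w(\widehat{\beta})| \leq n(\widehat{\alpha})+n(\widehat{\beta})-2b_C(L)$, from which the stated $sl$-inequality (\ref{eqn:themain}) with $\max$ on the right-hand side follows by the Bennequin formula $sl = w - n$, exactly as in the derivation of (\ref{eqn:JKconj2}) from (\ref{eqn:JKconj}).

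The principal obstacle I foresee is the second step: converting the bound $|c(\phi,C)|>1$ into the combinatorial statement that the normalized open book foliation always admits either an exchange move or a $C$-destabilization. This requires quantifying how the monodromy twist near $C$ interacts with the $b$-arcs of the foliation incident to $C$, and in particular ruling out the ``bad'' local models that appear precisely in the counter-examples when $|c(\phi,C)|$ is small. The planar assumption keeps the global combinatorics tractable, but the interplay with the FDTC constraint is the delicate core of the argument.
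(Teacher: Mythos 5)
Your toolkit (open book foliations on cobounding annuli, exchange moves and $C$-destabilizations, \textbf{[Planar]} controlling the tile combinatorics, \textbf{[FDTC]} excluding the strongly essential configurations) is the right one, but the architecture of your reduction has two genuine gaps. First, embedded cobounding annuli between $\widehat{\alpha}$ and a minimal representative $\widehat{\gamma}$ are not available for free: $C$-topologically isotopic closed braids need not cobound embedded annuli, and Theorem \ref{theorem:stablecobound} produces such annuli only after \emph{positively} stabilizing one braid and \emph{negatively} stabilizing the other along $C$; these preliminary stabilizations change both $n$ and $sl$ and must be folded into the final estimate, as the paper does explicitly. Second, and more seriously, your central step demands a Markov-theorem-without-stabilization relative to $C$: that the foliation always admits a reducing move ``unless $\widehat{\alpha}=\widehat{\gamma}$,'' i.e., that one can monotonically destabilize down to braid index $b_C(L)$. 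Neither this paper nor LaFountain--Menasco proves or needs anything that strong. Theorem \ref{theorem:commondestab} only shows that the \emph{two} boundary braids of the annuli can be brought, by exchange moves and $C$-destabilizations, to braids $\widehat{\alpha_0},\widehat{\beta_0}$ with \emph{equal} braid index and \emph{equal} self-linking number; this common index is not claimed to be $b_C(L)$, which enters only through the trivial bound $n(\widehat{\alpha_0})\geq b_C(L)$. The terminal case is not a minimal braid but a rigid ``alternating tiling'' of the annulus (plus the c-circle case handled by Lemmas \ref{lemma:essentialc} and \ref{lemma:degac}), which is resolved by stabilizing both sides the same number of times with the same sign. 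Your proposal has no mechanism for the situation where no reducing move exists yet the braid index still exceeds $b_C(L)$.

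The final bookkeeping is also not transferable as stated: you route the estimate through the exponent sum $w$ and the Bennequin formula $sl = w - n$, which are specific to the disk open book. In a general open book there is no well-defined writhe of a closed braid, so the inequality must be tracked directly in terms of $sl$, using Propositions \ref{prop:slform} and \ref{prop:sldiff} and the rule that a positive (de)stabilization preserves $sl$ while a negative one shifts it by $2$. This is exactly how the paper converts ``equal $n$ and equal $sl$ after destabilization'' into the inequality (\ref{eqn:themain}) by contradiction.
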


\begin{remark}
For the case of the open book $(D^{2},\mathsf{Id})$, 
according to a convention $c(\mathsf{Id}_{D^{2}},\partial D^{2})=\infty$ explained in \cite{ik2} we may regard the open book $(D^{2},\mathsf{Id})$ satisfies {\bf [FDTC]}. In this case being $C$($=\partial D^{2}$)-topologically isotopic is equivalent to being topologically isotopic, so Theorem \ref{theorem:main} contains the Jones-Kawamuro conjecture (\ref{eqn:JKconj2}) as its special case. 
\end{remark}

Although the assumptions we add seem too restrictive at first glance, as we will see in Section \ref{sec:counterexamples}, Theorem \ref{theorem:main} is optimal in the sense that we cannot drop any assumptions from Theorem \ref{theorem:main}. 
We will present examples of closed braids $\widehat{\alpha}$ and $\widehat{\beta}$ in an open book $(S,\phi)$ violating the inequality (\ref{eqn:themain}), satisfying:
\begin{enumerate}
\item[(a)] $S$ is planar, $\widehat{\alpha}$ and $\widehat{\beta}$ are $C$-topologically isotopic, but $|c(\phi,C)|=1$ (Example \ref{example:a}).
\item[(b)] $S$ is planar, $|c(\phi,C)|>1$, and $\widehat{\alpha}$ and $\widehat{\beta}$ are topologically isotopic but are not $C$-topologically isotopic (Example \ref{example:b}).
\item[(c)] $\widehat{\alpha}$ and $\widehat{\beta}$ are $C$-topologically isotopic, and $|c(\phi,C)| >1$, but $S$ is not planar (Example \ref{example:c}).
\end{enumerate}

Our proof is inspired by LaFountain-Menasco's proof of the Jones-Kawamuro conjecture \cite{lm}, based on the braid foliation machinery developed by Birman and Menasco (see \cite{bf} for a basics of braid foliation). Among other things, foliation change and exchange move introduced in \cite{bm4,bm5}, and various observations and techniques developed in proving Markov Theorem Without Stabilization (MTWS) \cite{BM1,BM2} and usual Markov theorem \cite{BM0} play crucial roles. In our proof, we use an open book foliation machinery developed in \cite{ik1-1,ik2,ik3,ik4} which is a generalization of the braid foliation.

In Section \ref{sec:obf}, we review the open book foliation machinery, for pairwise disjoint annuli cobounded by two closed braids. We also summarize various operations on open book foliation which will be used in later.

In Section \ref{sec:top}, we prove that after suitable stabilizations of particular signs, topologically isotopic closed braids always cobound pairwise disjoint, embedded annuli. It should be emphasized that results in Section \ref{sec:top} hold for all open books and closed braids. As a corollary, we prove a slightly stronger version of the Markov theorem for closed braids in general open books in Corollary \ref{cor:Markov}, which is interesting in its own right.

In Section \ref{sec:proof} we prove Theorem \ref{theorem:main}. This is the point where we need to use assumptions {\bf [Planar]} and {\bf [FDTC]}, and the notion of $C$-topologically isotopic plays crucial roles.

In Section \ref{sec:c-circles}, we prove two Lemmas concerning the property of cobounding annuli with c-circles, which are used in the proof of Theorem \ref{theorem:main}. Existence of such cobounding annuli is a new feature of general open book foliation, which did not appear in braid foliation settings. 

In Section \ref{sec:counterexamples} we give various counter examples of the Jones-Kawamuro conjecture (\ref{eqn:themain}) for general open books to explain how our result is best-possible in a certain sense. In particular, in Proposition \ref{prop:counter}, we show that counter examples for a naive generalization of the  inequality (\ref{eqn:JKconj2}) are quite ubiquitous. This justifies our modification (\ref{eqn:themain}), a notion of $C$-topologically isotopic and the minimal $C$-braid index.

\section{Open book foliation machinery}
\label{sec:obf}

In this section we review open book foliation machinery which will be used in the proof of Theorem \ref{theorem:main}. For details, see \cite{ik1-1,ik2,ik3}. 

\subsection{Open book foliation for cobounding annuli}

Let $\widehat{\alpha}$ and $\widehat{\beta}$ be closed braids in $M_{(S,\phi)}$. Let $A$ be pairwise disjoint embedded annuli such that $\partial A =  \widehat{\alpha} \cup (-\widehat{\beta})$. We call such $A$ \emph{cobounding annuli between $\widehat{\alpha}$ and $\widehat{\beta}$}, and write $\widehat{\alpha} \sim_{A} \widehat{\beta}$.

In this section we review open book foliation machinery for cobounding annuli. Note that connected components $-\widehat{\beta}$ of $\partial A$ is \emph{negatively} transverse to pages. This gives rise to some new features in open book foliation, which we will briefly discuss.

Let us consider the the singular foliation $\F(A)$ on $A$ which is induced by intersections with pages
\[ \F(A)=\left\{ A \cap S_t \ | \ t \in [0, 1] \right\}. \]
We say that $A$ admits an open book foliation if $\F(A)$ satisfies the following conditions. 
\begin{description}
\item[($\mF$ i)] 
The binding $B$ pierces $A$ transversely in finitely many points. 
Moreover, for each $p \in B \cap A$ there exists a disc neighborhood $N_{p} \subset \Int(A)$ of $p$ on which the foliation $\F(N_p)$ is radial with the node $p$, see Figure~\ref{fig:sign}-(i). We call $p$ an {\em elliptic} point. 

\item[($\mF$ ii)] 
The leaves of $\F(A)$ are transverse to $\partial A$. 

\item[($\mF$ iii)] 
All but finitely many pages $S_{t}$ intersect $A$ transversely.
Each exceptional page is tangent to $A$ at a single point.
In particular, $\F(A)$ has no saddle-saddle connections.

\item[($\mF$ iv)] 
All the tangencies of $A$ and fibers are of saddle type, see Figure~\ref{fig:sign}-(ii). 
We call them {\em hyperbolic} points.

\end{description}

By isotopy fixing $\partial A$, $A$ can be put so that it admits an open book foliation (see \cite[Theorem 2.5]{ik1-1}). 
  
A leaf of $\F(A)$, a connected component of $A \cap S_t$ is {\it regular} if it does not contain a tangency point and is {\it singular} otherwise. We will often say that a hyperbolic point $h$ \emph{is around an elliptic point $v$}, if $v$ is an end point of the singular leaf that contains $h$.

The regular leaves are classified into the following four types:
\begin{enumerate}
\item[a-arc]: An arc where one of its endpoints lies on $B$ and the other lies on $\partial A$.
\item[b-arc]: An arc whose endpoints both lie on $B$.
\item[s-arc]: An arc whose endpoints both lie on $\partial A$.
\item[c-circle]: A simple closed curve.
\end{enumerate} 

By orientation reasons, an a-arc connects a positive elliptic point and a point of $\widehat{\alpha}$, or a negative elliptic point and a point of $\widehat{\beta}$.
Similarly, an s-arc connects a point of $\widehat{\alpha}$ and a point of $\widehat{\beta}$. A $b$-arc may connect different components of the binding.

\begin{figure}[htbp]
\begin{center}
\includegraphics*[bb=120 514 507 712,width=120mm]{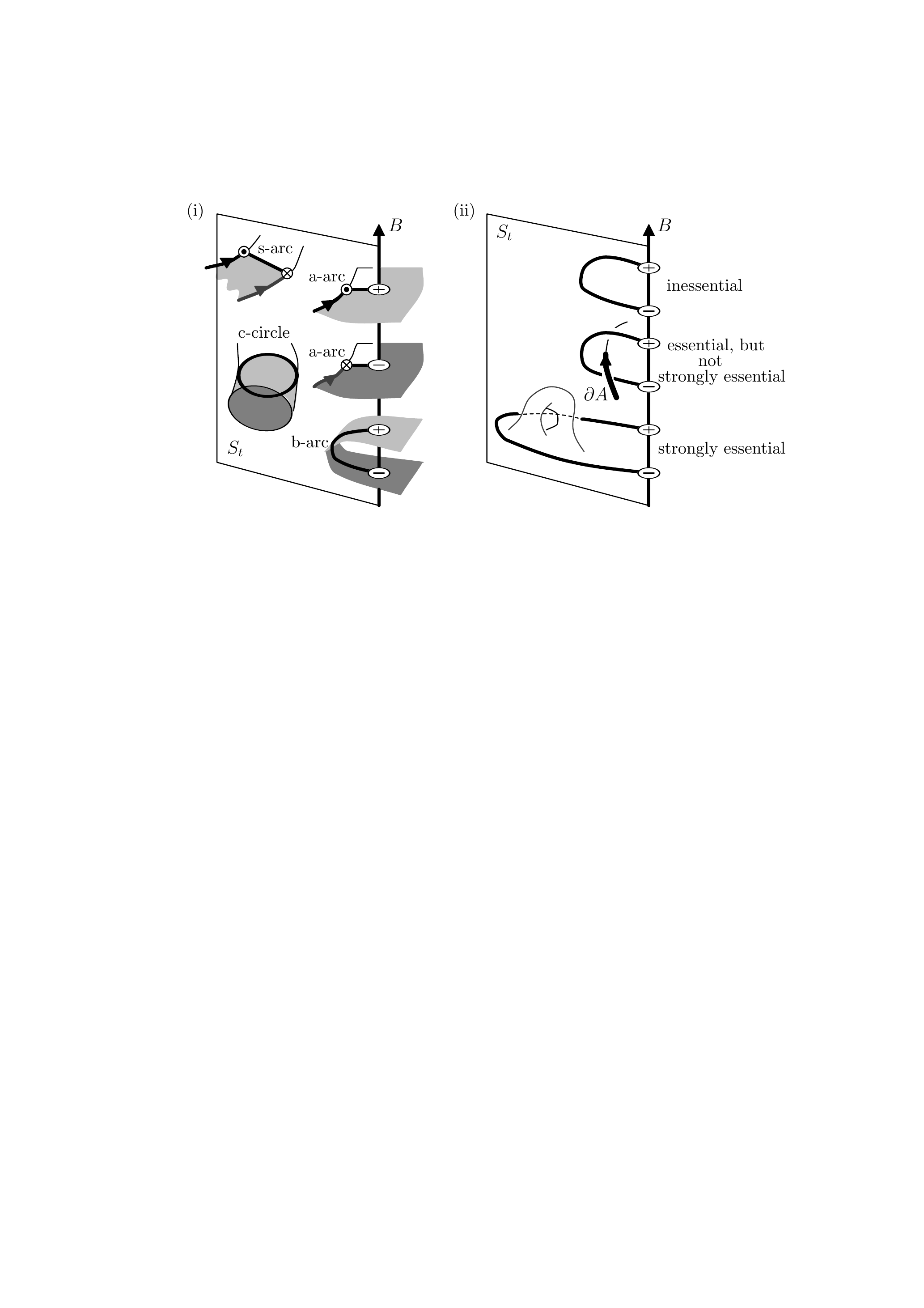}
\caption{(i) Regular leaves of open book foliation. (ii) essential and strongly essential b-arcs}
\label{fig:leaves}
\end{center}
\end{figure}

An elliptic point $p$ is {\em positive} (resp. {\em negative}) if the binding $B$ is positively (resp. negatively) transverse to $A$ at $p$.
The hyperbolic point $q$ is {\em positive} (resp. {\em negative}) if the positive normal direction $\vec{n}_{A}$ of $A$ at $q$ agrees (resp. disagrees) with the direction of the fibration. We denote the sign of a singular point $v$ by $\sgn(v)$.
See Figure \ref{fig:sign}.

\begin{figure}[htbp]
\begin{center}
\includegraphics*[bb= 163 551 442 715]{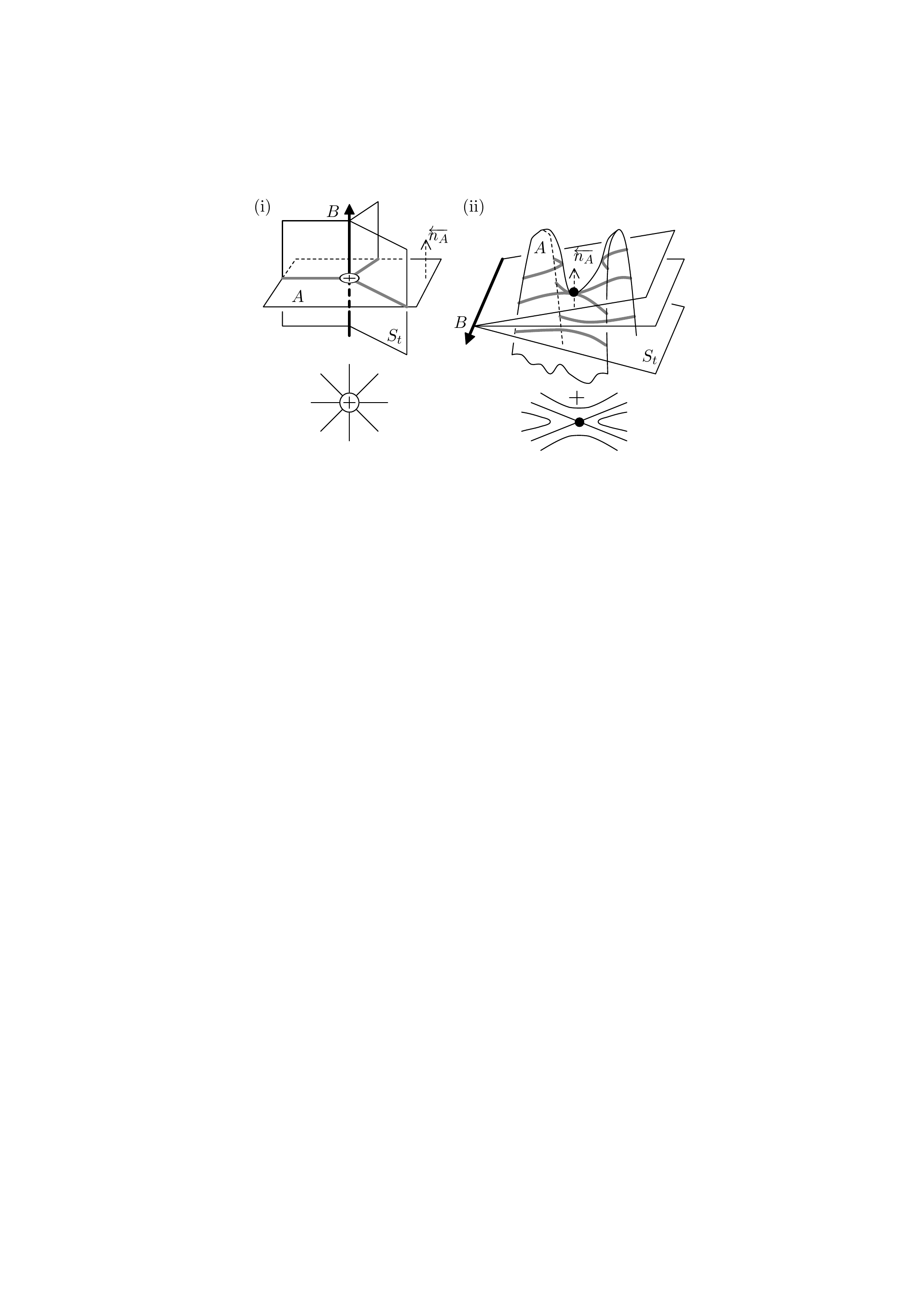}
\caption{Singular points and its signs for open book foliation: If the positive normal direction (illusrated by dotted arrow) of $A$ is opposite, we have a singular point with minus sign.}
\label{fig:sign}
\end{center}
\end{figure}

According to the types of nearby regular leaves, hyperbolic points are classified into nine types: Type $aa$, $ab$, $bb$, $ac$, $bc$, $cc$, $as$, $abs$, and $cs$. In the case of annuli, $ss$-singularity does not occur.
Each hyperbolic point has a canonical neighborhood as depicted in Figure ~\ref{fig:region}, which we call a {\em region}. We denote by $\sgn(R)$ the sign of the hyperbolic point contained in the region $R$. 
 
\begin{figure}[htbp]
\begin{center}
\includegraphics*[bb=166 502 439 708]{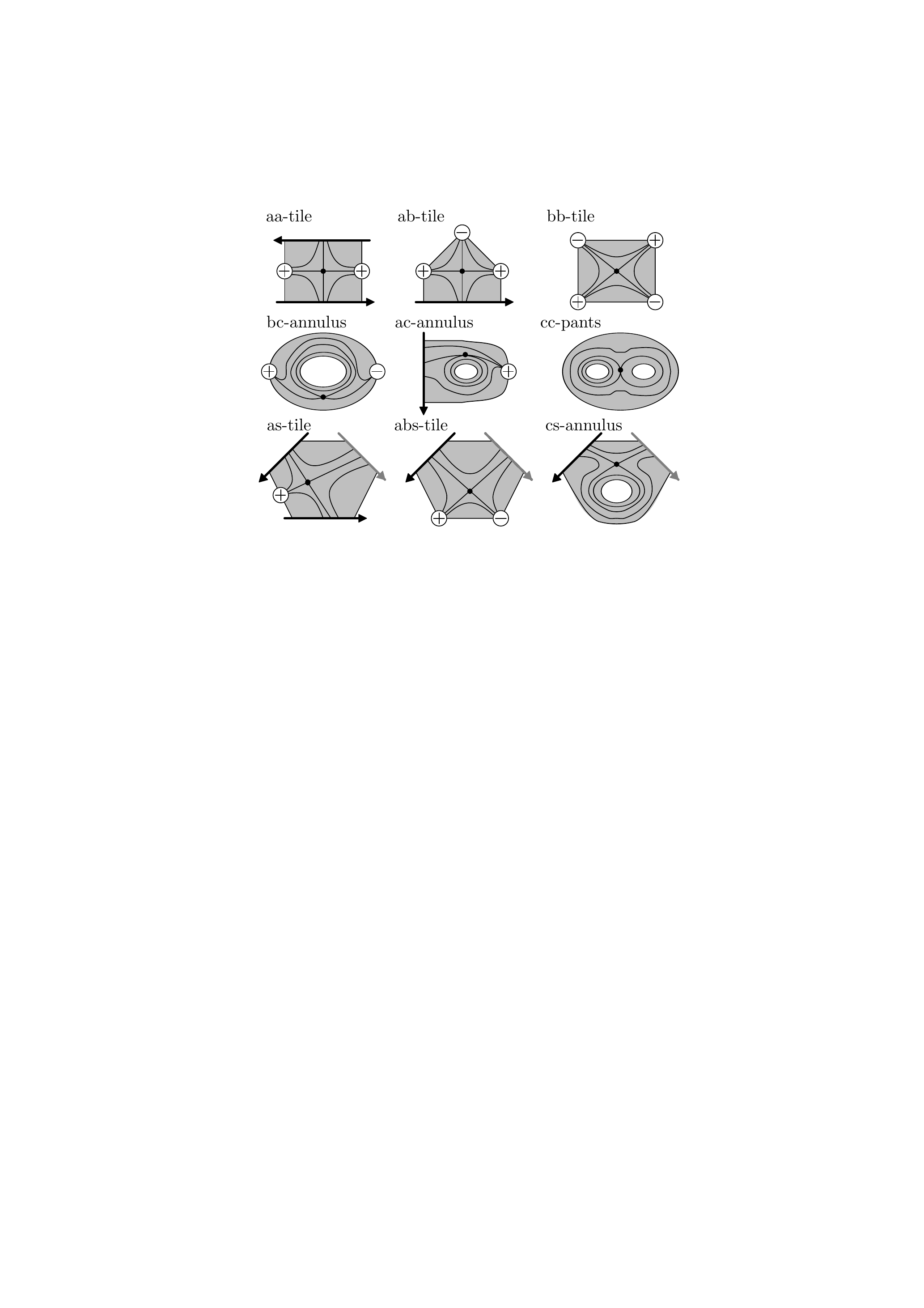}
\caption{Nine types of regions.}
\label{fig:region}
\end{center}
\end{figure}

If $\F(A)$ contains at least one hyperbolic point, then we can decompose $A$ as a union of regions whose interiors are disjoint \cite[Proposition 3.11]{ik1-1}. We call such a decomposition a \emph{region decomposition}.
In the region decomposition, some boundaries of a region $R$ can be identified. In such case, we say that $R$ is \emph{degenerated} (see Figure \ref{fig:degenerated}). Some degenerated region cannot exist, because around an elliptic point, all leaves must sit on distinct pages by {\bf ($\mathcal{F}$ i)}. 

\begin{figure}[htbp]
\begin{center}
\includegraphics*[bb=149 577 451 735,width=100mm]{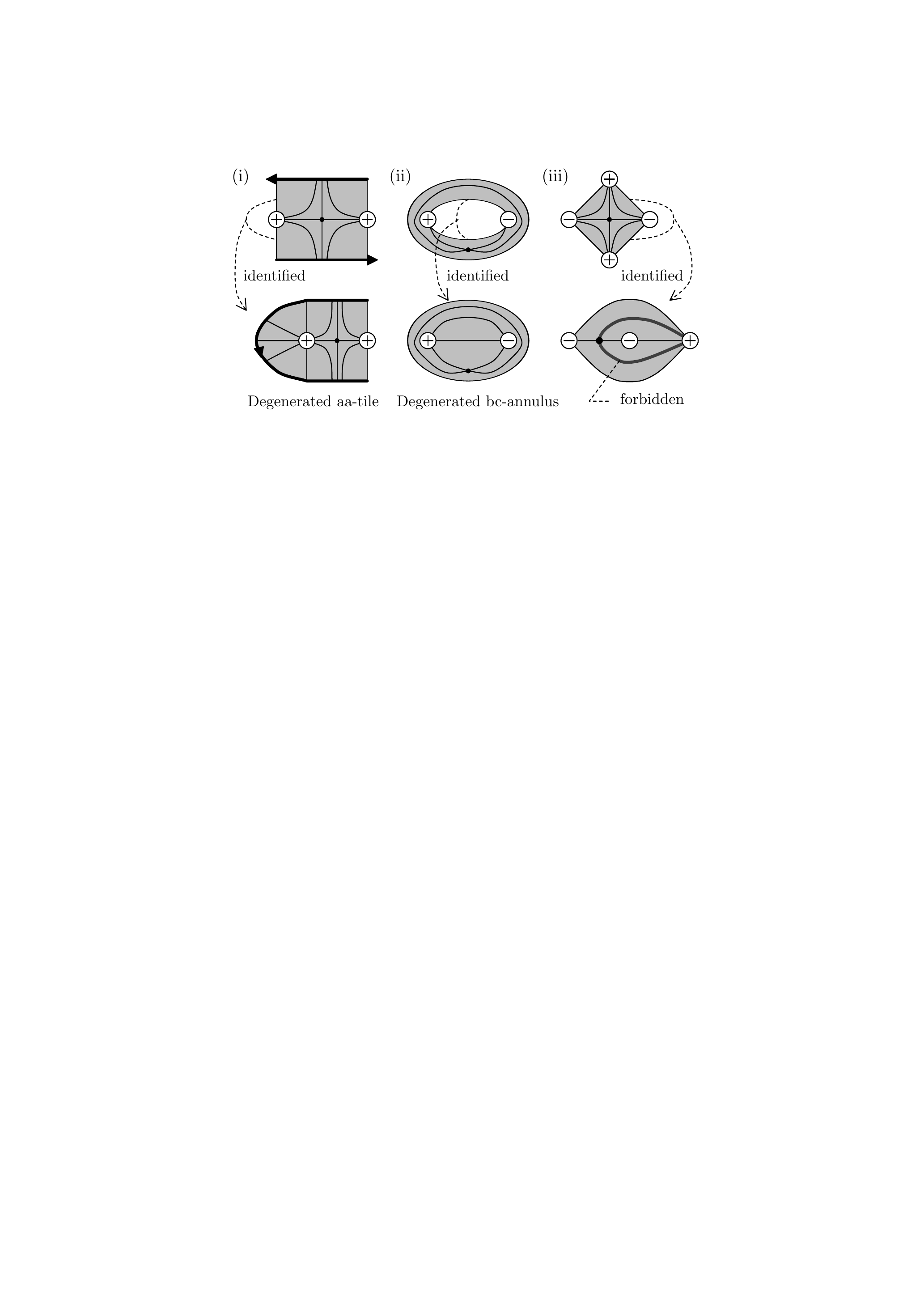}
\caption{Degenerated regions:  (iii) illustrates a forbidden degenenerated region. To see this is impossible, look at the leaf illustrated in bold-line. }
\label{fig:degenerated}
\end{center}
\end{figure}

A topological property of b-arc plays an important role. We say a b-arc $b \subset S_{t}$ is 
\begin{itemize}
\item {\em essential} if $b$ is not boundary-parallel in $S_{t} \setminus(S_{t} \cap \partial A)$.
\item {\em strongly essential} if $b$ is not boundary-parallel in $S_t$.
\item {\em separating} if $b$ separates the page $S_t$ into two components. 
\end{itemize} 
See Figure \ref{fig:leaves} (ii).

The conditions `{\em boundary parallel in $S_t$}' and `{\em non}-strongly essential' are equivalent. In this paper we prefer to use the former. Also note that a non-separating b-arc is always strongly essential. 
Finally we say that an elliptic point $v$ is {\em strongly essential} if every $b$-arc that ends at $v$ is strongly essential. 

For an element $\phi$ of the mapping class group of surface with boundary $S$ and   a connected component $C$ of $\partial S$, a rational number $c(\phi,C)$, called the \emph{fractional Dehn twist coefficients} (\emph{FDTC}, in short), is defined \cite{hkm1}. This number measures to what extent $\phi$ twists the boundary $C$, and plays an important role in contact geometry.

A key property of strongly essential elliptic point is that one can estimate the FDTC of the monodromy from such an elliptic point.

\begin{lemma}\cite[Lemma 5.1]{ik2}
\label{lemma:FDTC}
Let $v$ be an elliptic point of $\F(A)$ lying on a binding component $C \subset \partial S$. Assume that $v$ is strongly essential and there are no a-arc or s-arc around $v$. Let $p$ (resp. $n$) be the number of positive (resp. negative) hyperbolic points that lies around $v$. Then
\begin{enumerate}
\item If $\sgn(v)= +1$ then $-n \leq c(\phi,C) \leq p.$
\item If $\sgn(v)= -1$ then $-p \leq c(\phi,C) \leq n.$
\end{enumerate} 
\end{lemma}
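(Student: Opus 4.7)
The plan is to relate $c(\phi,C)$ to the combinatorics of the b-arcs ending at $v$ by tracking the unique b-arc in each page as $t$ varies through one full cycle, and comparing the accumulated ``winding'' around $C$ to the rotation number induced by the monodromy.

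First I would use the assumption that no a-arcs or s-arcs sit around $v$, together with the radial foliation structure in a disc neighborhood $N_v$ of $v$, to show that for each $t\in S^{1}$ there is a unique b-arc $b_t\subset S_t$ ending at $v$. The cyclic order of these b-arcs agrees with the angular order around $v$, which is increasing in $t$ when $\sgn(v)=+1$ and decreasing when $\sgn(v)=-1$. Between two consecutive hyperbolic points around $v$ the b-arc varies by ambient isotopy inside the page, and at each hyperbolic point the topology changes by an elementary move that pushes the free endpoint of $b_t$ to an adjacent elliptic point.

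Next I would identify the pages $S_t$ for $t\in[0,1)$ with a fixed page $S$ via a trivialization of the fibration, and track the lifted free endpoint $\widetilde{w}(t)$ of $b_t$ in the universal cover of a collar neighborhood of $C$ in $S$. The closing-up condition imposed by the monodromy---namely that $b_{1^-}$ is identified with $\phi(b_{0^+})$---guarantees that the net displacement of $\widetilde{w}$ over one full $t$-cycle equals $c(\phi,C)$, up to a sign determined by $\sgn(v)$.

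The key step is a local computation at each hyperbolic point around $v$. A case analysis over the region types compatible with our no-a-arc/no-s-arc hypothesis (essentially $bb$ and $bc$, together with their degenerated forms) shows that crossing a positive hyperbolic point contributes a non-negative shift of at most one full twist to $\widetilde{w}$, and crossing a negative one contributes a non-positive shift of at least $-1$ full twist. Strong essentiality of $v$ is precisely the hypothesis that keeps each shift within a single fundamental domain, by ruling out the collapse of a b-arc onto the boundary during the move. Summing over the $p$ positive and $n$ negative hyperbolic points around $v$ then produces a net displacement lying in $[-n,p]$, which yields $-n\le c(\phi,C)\le p$ in the case $\sgn(v)=+1$. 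For $\sgn(v)=-1$ the cyclic $t$-parameterization is reversed, which interchanges the roles of $p$ and $n$ and gives the desired second pair of inequalities. The principal obstacle is this local computation: verifying rigorously that each hyperbolic point shifts $\widetilde{w}$ by at most one unit with sign equal to $\sgn(h)$ requires careful bookkeeping of every region type around $v$ and of how the resulting b-arc modifications lift to the universal cover of a collar of $C$, and the strong essentiality hypothesis is exactly what rules out the degenerate configurations in which the bound could fail.
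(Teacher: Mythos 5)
First, a caveat on the comparison: the paper does not prove this lemma at all --- it is quoted from \cite[Lemma 5.1]{ik2} --- so what follows measures your sketch against the argument given there. Your overall strategy is essentially the one used in \cite{ik2}: since no a- or s-arcs meet $v$, each regular page $S_t$ contains exactly one leaf ending at $v$, a b-arc $b_t$, ordered around $v$ by $t$ according to $\sgn(v)$; strong essentiality makes each $\mathcal{P}(b_t)$ an essential arc of $S$ based at $v$, hence an element of the Honda--Kazez--Mati\'c linearly ordered set of essential arcs; the family $\{b_t\}$ interpolates between $b_0$ and its image under the monodromy with one jump per hyperbolic point around $v$; and the real content is the local estimate that a jump at a hyperbolic point of sign $\varepsilon$ moves the arc by less than one full $C$-twist in the direction determined by $\varepsilon$ (this rests on the fact that consecutive leaves of the embedded surface are disjoint, and disjoint essential arcs with the same initial point differ by less than a full boundary twist). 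You have correctly identified that this per-singularity computation is where the work lies, and you leave it unproved.

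The genuine gap is in your concluding step. The fractional Dehn twist coefficient is a \emph{translation number}, not the displacement of a single arc: from $\tilde\phi(x)-x\in[-n,p]$ for one point $x$ of the lifted order one can only conclude $-n-1<c(\phi,C)<p+1$, which is strictly weaker than the claim. Your assertion that ``the net displacement of $\widetilde{w}$ over one full $t$-cycle equals $c(\phi,C)$'' is false in general, and the inequality does not follow from a displacement bound alone. The bridge used in \cite{ik2} is the right-veering characterization of the FDTC: the chain of single-step bounds composes (because $T_C$ preserves the HKM order) to give $T_C^{-n}(b_0)\preceq \phi(b_0)\preceq T_C^{\,p}(b_0)$, whence $T_C^{-p}\phi$ sends the essential arc $b_0$ weakly to the left and so cannot have positive FDTC, giving $c(\phi,C)\leq p$; the lower bound is symmetric. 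A smaller inaccuracy: tracking ``the free endpoint of $b_t$'' in a collar of $C$ is not well posed, since the other endpoint of a b-arc is an elliptic point lying on the binding, possibly on a different component; what is compared is the isotopy class of the whole arc in the ordered set of essential arcs based at $v$. Strong essentiality is needed precisely so that each $b_t$ defines such a class (a boundary-parallel b-arc has no well-defined position in this order), not to ``keep each shift within a fundamental domain.''
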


We recall the following observation. 

\begin{proposition}\cite[Proposition 2.6]{ik1-1}
\label{prop:withoutc}
If cobounding annuli $A$ admit an open book foliation, then by ambient isotopy  fixing $\partial A$ we can put $A$ so that $\F(A)$ have no c-circles. Moreover, if the original cobounding annuli $A$ do not intersect with a component $C'$ of the binding $B$, then $\F(A)$ can be chosen so that no elliptic point of $\F(A)$ lie on $C'$.
\end{proposition}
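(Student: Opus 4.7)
The plan is to first put $A$ in open book foliation position using the existence result \cite[Theorem 2.5]{ik1-1}, and then eliminate c-circles one at a time by an ambient isotopy supported in a small 3-ball, inducting on the total number of c-circles of $\F(A)$. All supporting balls will be taken in a fixed neighborhood of $A$, so the ``moreover'' clause—non-intersection with a distinguished binding component $C'$—will be maintained provided the initial Theorem 2.5 isotopy is itself chosen with support disjoint from a tubular neighborhood of $C'$ (which is possible since the original $A$ misses $C'$).

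First I would pick an innermost c-circle $c$, meaning a c-circle bounding a subdisc $D\subset A$ whose interior contains no c-circle. The leaf $c$ lies on some page $S_t$. The key step is to analyze $\F(A)|_D$: a collar of $c$ inside $D$ cannot be foliated by c-circles (by innermostness), so leaves adjacent to $c$ inside $D$ must be non-closed, hence must hit either the binding or $\partial A$. Consequently $D$ must contain at least one elliptic point or one $\partial A$-endpoint, and by the classification into the nine region types together with finiteness of the singular set, one can pass to a still smaller innermost disc $D_0\subset D$ whose foliation is controlled enough that $c_0:=\partial D_0$ can be swept off the page $S_t$.

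The elimination move itself is a standard push through a thin 3-ball: the disc $D_0\subset A$ together with a parallel copy on a slightly later page cobound an embedded ball in the open book neighborhood of $S_t$, across which $D_0$ is isotoped. The number of c-circles then strictly decreases, and iterating produces $\F(A)$ with no c-circles. When $c$ is inessential in $S_t$ the ball is built from a disc in $S_t$ capping $c$; when $c$ is essential in $S_t$ the same ball is constructed in a neighborhood of an elliptic point inside $D$, using the local model of $A$ near the binding. The ``moreover'' clause then follows because each of these supporting balls sits in an arbitrarily small neighborhood of a subdisc of $A$, and in particular can be taken disjoint from any binding component $C'$ the original $A$ did not meet, so no elliptic point on $C'$ is ever created.

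The main obstacle is exactly the case in which $c$ is essential in $S_t$: there is no subdisc of $S_t$ to cap $c$ off with, so the most naive push-across-a-bubble argument does not apply. Handling this case requires showing that the innermost disc $D$ must then contain a suitably positioned elliptic point around which an alternative local elimination is available, and keeping track of what this does to neighboring regions so that no new c-circles are introduced. This bookkeeping, rather than any deeper topological input, is the technical heart of the argument.
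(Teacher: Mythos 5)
The paper does not actually prove this statement; it is imported verbatim from \cite[Proposition 2.6]{ik1-1}. Judged on its own terms, your argument has a gap at the very first step: you choose an ``innermost c-circle, meaning a c-circle bounding a subdisc $D\subset A$ whose interior contains no c-circle.'' In cobounding annuli such a circle need not exist. A simple closed curve in an annulus either bounds a disc there or is parallel to the core, and in the situations this paper actually encounters every c-circle is core-parallel: Lemma \ref{lemma:essentialc} rules out null-homotopic c-circles under the hypotheses of Theorem \ref{theorem:commondestab}, and Figure \ref{fig:thinannuli} and Lemma \ref{lemma:degac} exhibit cobounding annuli whose entire foliation is two degenerated ac-annuli glued along a single core-parallel c-circle. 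For such $A$ your induction never starts, yet the proposition still asserts that $\F(A)$ can be made c-circle free.

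Second, the case you yourself flag as the crux --- $c$ essential in the page $S_t$ --- is deferred rather than handled, and the mechanism you propose (an isotopy supported in a thin ball near a subdisc of $A$, strictly decreasing the number of c-circles with no other effect) is incompatible with what must actually happen. A c-circle essential in $S_t$ or in $A$ is removed only by sweeping $A$ across a subsurface of the page; this converts the family of c-circles into b-arcs and creates new elliptic points on whatever binding components that subsurface meets. This is precisely why the paper remarks, immediately after the proposition, that killing c-circles produces ``a lot of boundary-parallel b-arcs,'' and why Remark \ref{rem:c-circle} notes that such c-circles cannot be eliminated without increasing the number of singular points. It also undercuts your justification of the ``moreover'' clause: since the move genuinely creates elliptic points, one must show the subsurface being swept across can be chosen to avoid $C'$ (using that the original $A$ is disjoint from $C'$), not merely that some supporting ball is small. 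As it stands, the technical heart of the proof is acknowledged but not supplied.
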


We remark that when we put $A$ so that $\F(A)$ has no c-circles, in exchange, $\F(A)$ may have a lot of boundary-parallel b-arcs.

Finally, we remind the relation between the open book foliation and the self-linking number.

\begin{proposition}\cite[Proposition 3.2]{ik1-1}
\label{prop:slform}
Let $\Sigma$ be a Seifert surface of a closed braid $\widehat{\alpha}$, admitting an open book foliation. Then the self-linking number is given by
\[ sl(\widehat{\alpha},[\Sigma]) = -(e_{+}-e_{-})+(h_{+}-h_{-}),\]
where $e_{\pm}$ and $h_{\pm}$ the number of positive/negative hyperbolic points of the open book foliation of $\Sigma$.
\end{proposition}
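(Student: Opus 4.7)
The plan is to realize the self-linking number as a signed count of zeros of a generic section of $\xi|_\Sigma$ and then identify those zeros with the singular points of the open book foliation $\F(\Sigma)$.

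First, by Giroux's theorem applied to $(M,\xi)=(M_{(S,\phi)},\xi_{(S,\phi)})$, I can isotope $\xi$ into a standard Thurston--Winkelnkemper form adapted to the open book, so that $\xi$ is positively transverse to the binding $B$ and uniformly close to the page tangent planes away from $B$. A $C^{0}$-small isotopy of $\Sigma$ fixing $\partial\Sigma=\widehat{\alpha}$ then arranges the characteristic foliation $\xi\cap T\Sigma$ on $\Sigma$ to coincide, as a singular foliation, with $\F(\Sigma)$: elliptic points of $\F(\Sigma)$ correspond to source/sink singularities of the characteristic foliation, hyperbolic points correspond to saddle singularities, and the sign of each singular point of $\F(\Sigma)$ is determined by the same local geometric data.

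Next, by the standard contact-geometric definition, $sl(\widehat{\alpha},[\Sigma])$ equals the relative Euler number of the $2$-plane bundle $\xi|_\Sigma$ with respect to the contact framing along $\widehat{\alpha}$. Concretely, if $X$ is any section of $\xi|_\Sigma$ whose restriction to $\widehat{\alpha}$ induces the contact framing and which is transverse to the zero section, then $sl(\widehat{\alpha},[\Sigma])$ equals the signed count of zeros of $X$. I would take $X$ to be a section whose image under the bundle map $\xi|_\Sigma \to T\Sigma$ (an isomorphism away from the characteristic singular locus) is tangent to $\F(\Sigma)$; near $\widehat{\alpha}$ one checks $X$ indeed realizes the contact framing. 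The zero set of $X$ is then exactly the union of the elliptic and hyperbolic points of $\F(\Sigma)$, and a local model calculation at each type yields: a positive (resp.\ negative) elliptic point contributes $-1$ (resp.\ $+1$), and a positive (resp.\ negative) hyperbolic point contributes $+1$ (resp.\ $-1$). Summing over all singular points gives
\[ sl(\widehat{\alpha},[\Sigma]) = -(e_{+}-e_{-})+(h_{+}-h_{-}). \]

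The main obstacle is the local sign analysis, particularly at the elliptic points. Since $X$ is a section of a $2$-plane bundle rather than a tangent vector field on $\Sigma$, the sign of a zero is not simply the Poincar\'{e}--Hopf index of the corresponding characteristic-foliation singularity; rather, it depends on how the contact planes rotate about the binding as one circles the elliptic point, picking up a sign from $\sgn(v)$, which records the transverse direction of $B$ through $\Sigma$. Verifying that the net contribution at a positive elliptic point is $-1$ (and $+1$ at a negative one) requires carefully tracking the interaction between the page orientation, the orientation of $\Sigma$, and the co-orientation of $\xi$. The hyperbolic case is more direct: the contribution is $\pm 1$ according to whether the positive normal of $\Sigma$ at the saddle agrees or disagrees with the fibration direction, which is precisely the definition of $\sgn$ for hyperbolic points in Figure~\ref{fig:sign}.
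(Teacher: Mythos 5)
Your proposal is correct and follows essentially the same route as the proof in the cited source \cite{ik1-1}: one identifies $\F(\Sigma)$ with the characteristic foliation of the supported contact structure (the structural stability of open book foliations), interprets $sl(\widehat{\alpha},[\Sigma])$ as the relative Euler number of $\xi|_{\Sigma}$ with respect to the contact framing, and counts the signed zeros of a section directing the foliation, with the local contributions $-1,+1,+1,-1$ at positive elliptic, negative elliptic, positive hyperbolic, negative hyperbolic points exactly as you state. The present paper only quotes the result, so there is nothing further to compare; the sign bookkeeping you flag as the main obstacle is indeed the only delicate point, and it is handled by the same orientation analysis in \cite{ik1-1}.
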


\subsection{Movie presentation}

A movie presentation is a method to visualize an open book foliation of a surface $F$. See \cite[Section 2.1.5]{ik1-1} for details.

Let $F$ be an oriented surface embedded in $M_{(S,\phi)}$ so that it admits an open book foliation $\F(F)$.
We identify $\overline{ M_{(S,\phi)}-S_{0} }$ with $S\times[0,1]/\sim_{\partial}$, where $\sim_{\partial}$ is an equivalence relation given by $(x,t) \sim_{\partial} (x,s)$ for $x \in \partial S$ and $s,t \in [0,1]$.
Let $\mathcal{P}: \overline{M_{(S,\phi)}-S_{0}} \cong S\times[0,1]/\sim_{\partial} \rightarrow S$ be the projection given by $\mathcal{P}(x,t)=x$. We use $\mathcal{P}$ to fix the way of identification of the page $S_t$ with abstract surface $S$. In particular, when we draw the slice $(S_{t},S_{t}\cap F)$, we will actually  draw $\mathcal{P}(S_{t},S_{t}\cap F)$.

First we review a notion of describing arc for a hyperbolic point. By definition, a hyperbolic point $h$ is a saddle tangency of a singular page $S_{t^{*}}$ and $F$.
Let $N(h) \subset F$ be a saddle-shaped neighborhood of $h$. We put $F$ so that in the interval 
$[t^{*}-\varepsilon,t^{*}+\varepsilon]$ for a small $\varepsilon>0$, $F-N(h)$ is just a product. That is, the complement $F-N(h)$ is identified with $(S_{t^{*}} \cap (F-N(h)) \times [t^{*}-\varepsilon,t^{*}+\varepsilon]$.

The embedding of $N(h)$ is understood as follows: For $t \in [t^{*}-\varepsilon,t^{*})$, as $t$ increases two leaves $l_1(t)$ and $l_2(t)$ in $S_t$ approach along a properly embedded arc $\gamma \subset S_t$ joining $l_{1}$ and $l_{2}$, and at $t=t^{*}$ these two leaves collide to form a hyperbolic point. For $t \in (t^{*},t^{*}+\varepsilon]$, the configuration of leaves are changed (See Figure  \ref{fig:desarc}). Thus, the saddle $h$ is determined, up to isotopy, by an arc $\gamma \in S_{t^{*}-\varepsilon}$, which illustrates how two leaves $l_1(t)$ and $l_{2}(t)$ collide. We call $\gamma$ the \emph{describing arc} of the hyperbolic point $h$.

The describing arc also determines the sign of $h$: $\sgn(h)$ is positive (resp. negative) if and only if the positive normals $\vec n_F$ of $F$ pointing out of (resp. into) its describing arc.

\begin{figure}[htbp]
\begin{center}
\includegraphics*[bb=176 582 424 729,width=80mm]{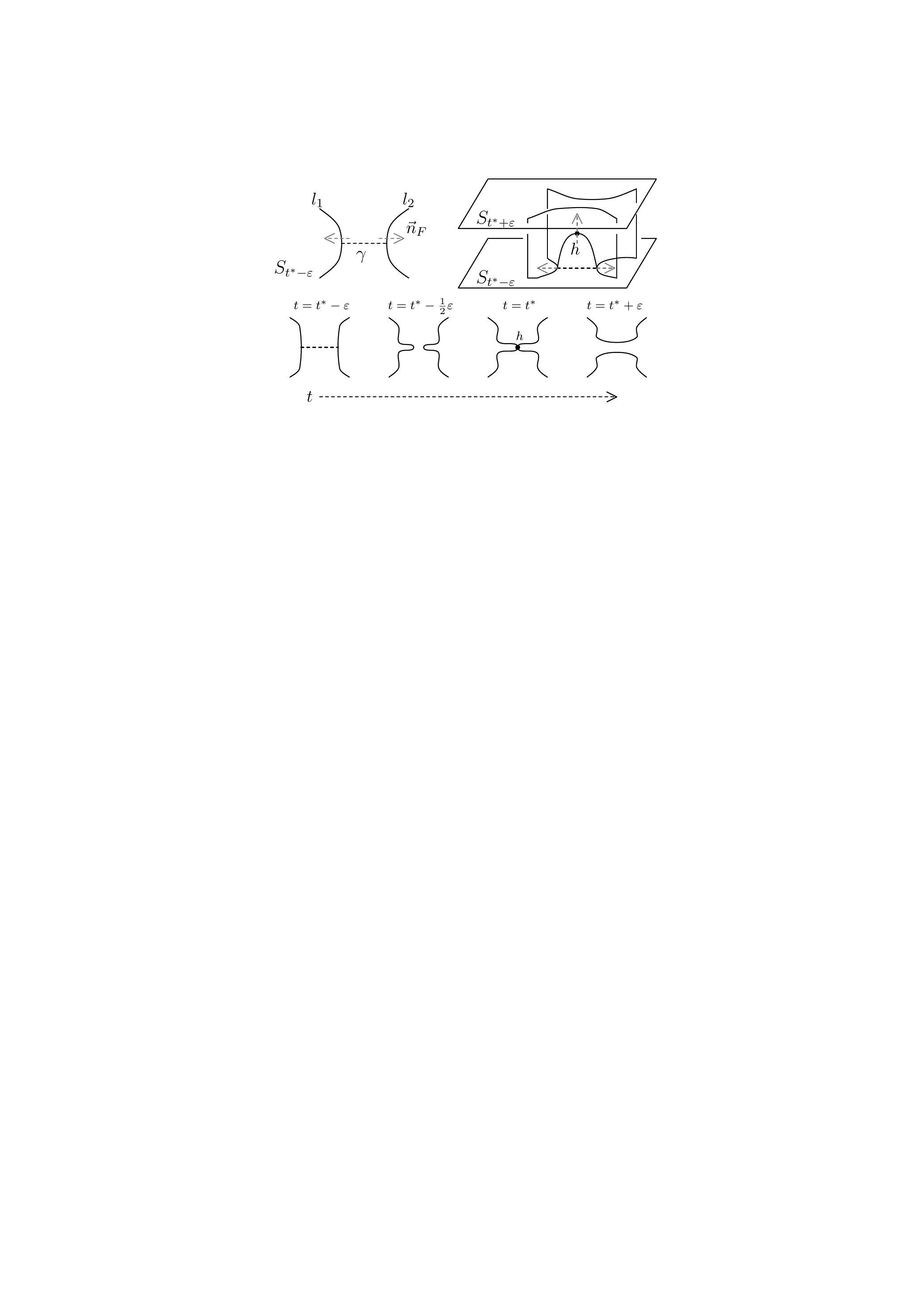}
\caption{Describing arc of hyperbolic point (for the case $\sgn = +$). We indicate the positive normal $\vec n_F$ by dotted gray arrows. We will illustrate the describing arc by dotted line.}
\label{fig:desarc}
\end{center}
\end{figure}

Take $0=s_{0}<s_{1}<\cdots < s_{k} =1$ so that $S_{s_{i}}$ is a regular page and that in each interval $(s_{i},s_{i+1})$ there exists exactly one hyperbolic point $h_i$. 
The sequence of slices $( S_{s_{i}}, S_{s_{i}} \cap F )$ with a describing arc of the hyperbolic point $h_i$ is called a \emph{movie presentation} of $F$. A movie presentation completely determines how the surface $F$ is embedded in $M_{(S,\phi)}$ and its open book foliation.
For convenience, to make it easier to chase how the surface and the braid move, we  often add redundant slices $(S_t,S_t \cap F)$ in the movie presentation.
  
\begin{example}[Movie for Example \ref{exam:counterBEHK}]
 \label{example:movie}

Here we give a movie presentation of the disc $D$ bounding the unknot $\widehat{\rho^{2}}$, in the open book $(A,T_{A})$ in Example \ref{exam:counterBEHK}. 

\begin{figure}[htbp]
\begin{center}
\includegraphics*[bb= 142 449 452 735,width=100mm]{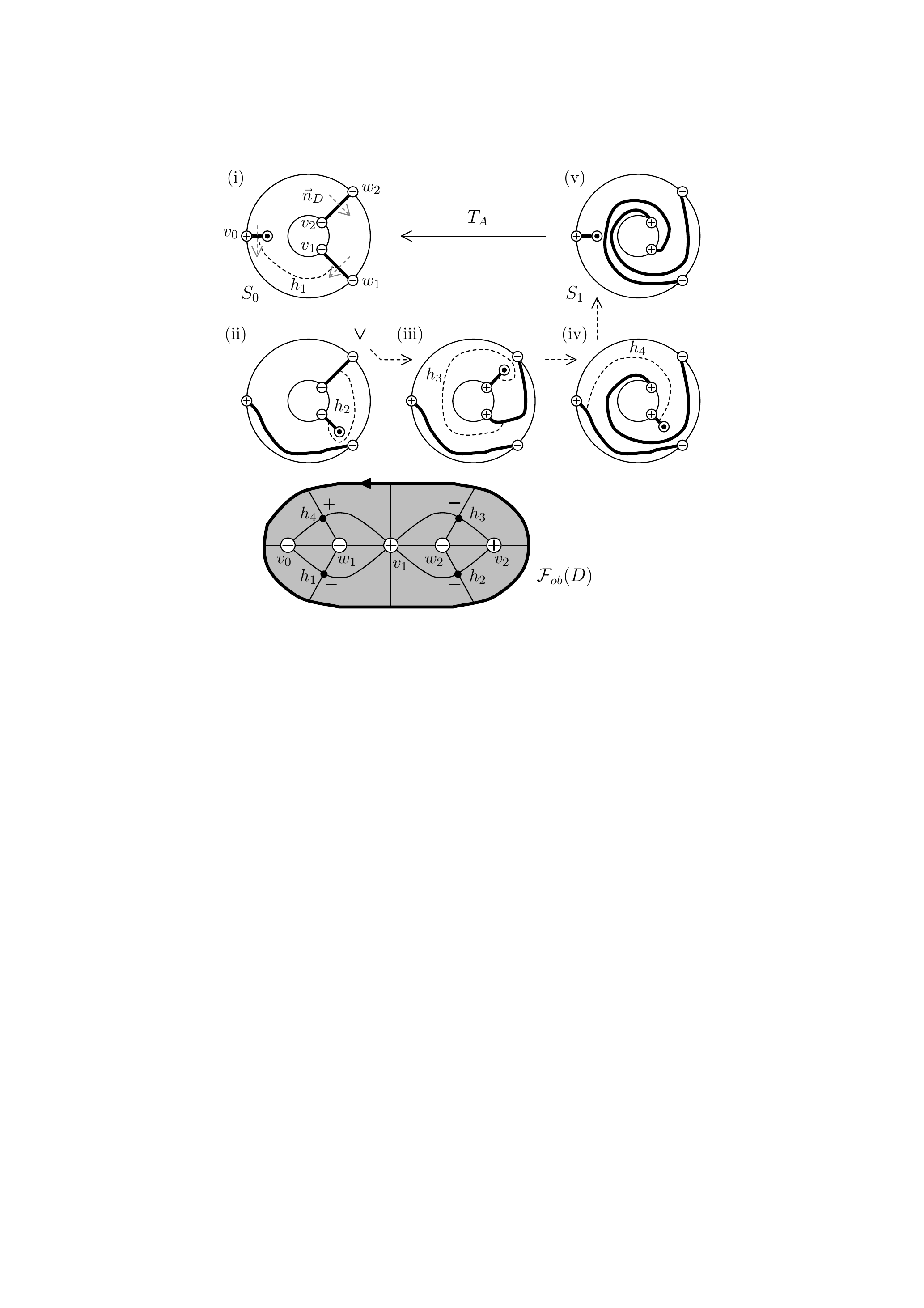}
\caption{Movie presentation of the disc $D$ bounding an unknot $\widehat{\rho^{2}}$ in the open book $(A,T_{A})$ (Example \ref{exam:counterBEHK})}
\label{fig:movieBEHK}
\end{center}
\end{figure}

\begin{enumerate}
\item[(i)] At $t=0$, we have one a-arc and two b-arcs. T positive normal $\vec n_{D}$ of $D$ is indicated by the gray, dotted arrow. As $t$ increases, the a-arc from $v_0$ and the b-arc connecting $v_1$ and $w_1$ forms a hyperbolic point $h_1$, whose describing arc is indicated by the dotted line. By the positive normal $\vec n_{D}$, the sign of $h_{1}$ is negative.
\item[(ii)] After passing the hyperbolic point $h_1$, we get an a-arc from $v_1$ and b-arc connecting $v_0$ and $w_1$. As $t$ increases, we then have a negative hyperbolic point $h_2$, indicated by the dotted line.
\item[(iii)] After passing the hyperbolic point $h_2$, we get an a-arc from $v_2$ and b-arc connecting $v_1$ and $w_2$. As $t$ increases, we then have a negative hyperbolic point $h_3$, indicated by the dotted line.
\item[(iv)] After passing the hyperbolic point $h_3$, we get an a-arc from $v_1$ and b-arc connecting $v_2$ and $w_2$. As $t$ increases, we then have a positive hyperbolic point $h_4$, indicated by the dotted line.
\item[(v)] After passing the hyperbolic point $h_4$, and at $t=1$ we have one a-arc  and two b-arcs. During the passage (i) -- (v), the boundary of a-arc winds twice in the annulus $A$. Finally, the slice at $t=1$ is mapped to the first slice (i) by the monodromy $T_{A}$ to give an embedded disc in $M_{(A,T_{A})}$.
\end{enumerate}

See Figure \ref{fig:movieBEHK}. From this movie presentation, we conclude $\F(D)$ is depicted as Figure \ref{fig:movieBEHK} so by Proposition \ref{prop:slform} we confirm that $sl(\widehat{\rho^{2}})=-3$, as asserted in Example \ref{exam:counterBEHK}.
\end{example}

\subsection{Review of operations on open book foliation}
\label{sec:operation}

In \cite{ik3}, we developed operations that modify the open book foliation. Such operations allow us to simplify the open book foliations and to put surfaces and closed braids in better positions.

These operations are realized by certain ambient isotopy which will often change the braid isotopy class and a position of surface dramatically, but when we just look at the open book foliation, they are local in the following sense:
For each operation there is a certain subset $U$ of $A$ such that the operation changes $\F(A)$ and the pattern of a region decomposition inside $U$, but it preserves $\F(A)$ outside of $U$.

Before describing operations on open book foliation, first we make it clear the meaning of stabilizations of closed braids. Let $C$ be a connected component of the binding $B$, and let $\mu_{C}$ be the meridian of $C$. We say a closed braid $\widehat{\alpha}$ is a positive (resp. negative) \emph{stabilization} of a closed braid $\widehat{\beta}$ along $C$, if $\widehat{\alpha}$ is obtained by connecting $\mu_C$ and $\widehat{\beta}$ along a positively (resp. negatively) twisted band.
Here a positively (resp. negatively) twisted band is a rectangle whose open book foliation has unique hyperbolic point with positive (resp. negative) sign. See Figure \ref{fig:stabilization}.

A positive stabilization preserves the transverse link types whereas a negative stabilization does not. If $\widehat{\alpha}$ is a negative stabilization of a closed braid $\widehat{\beta}$, $sl(\widehat{\alpha}) = sl(\widehat{\beta})-2$.

\begin{figure}[htbp]
\begin{center}
\includegraphics*[bb=163 632 434 733,width=90mm]{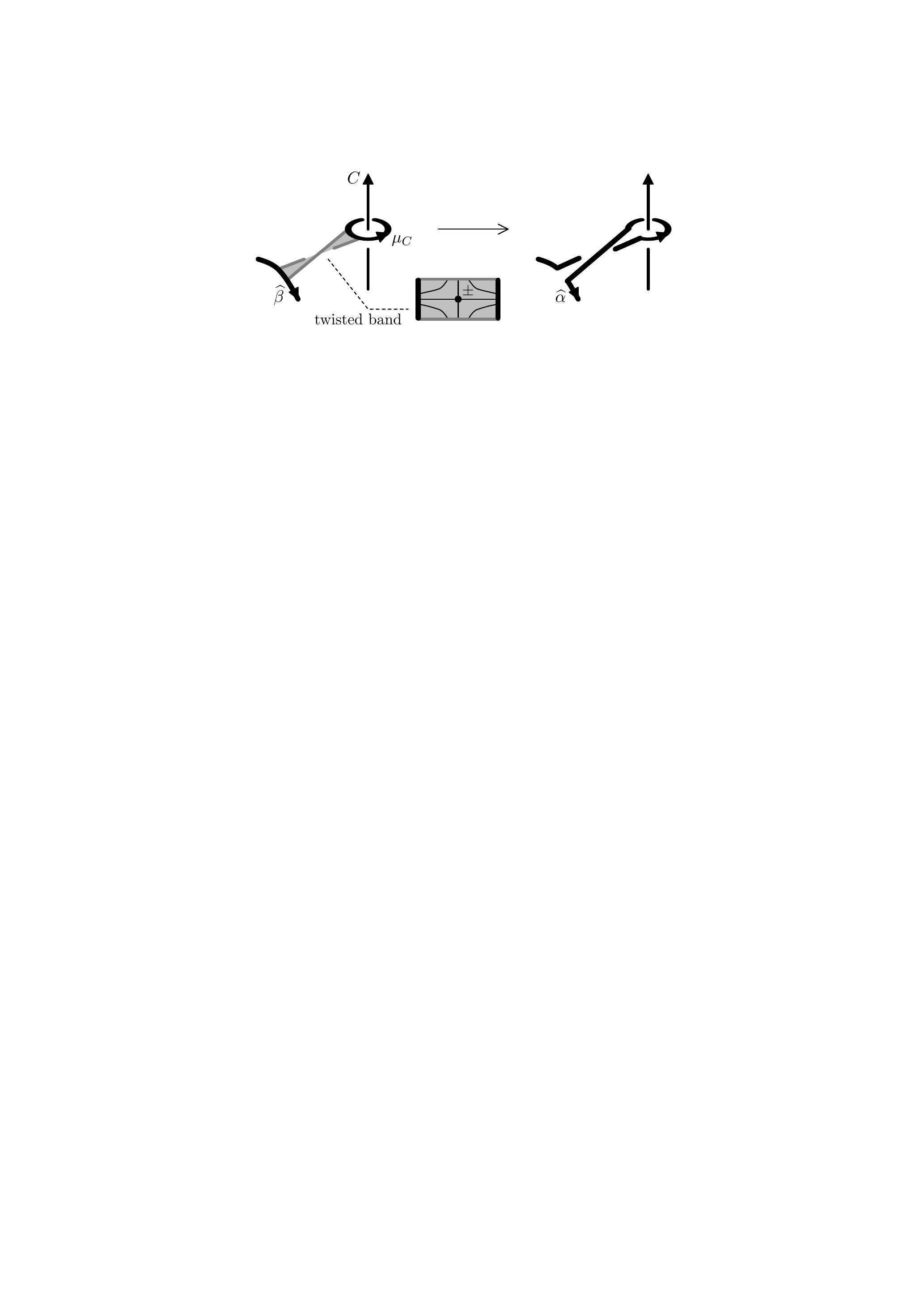}
\caption{Stabilization of closed braid}
\label{fig:stabilization}
\end{center}
\end{figure}

Now we summarize operations on open book foliation in somewhat casual way.
In Figure \ref{fig:operation} we illustrate five operations on open book foliations. The reader can understand these figures as a rule of changing the open book foliation, preserving the topological link types (or, the braid isotopy classes, or the transverse knot types) of $\partial A$. For detailed discussions and more precise statements, see \cite{ik3}.\\

\begin{figure}[htbp]
\begin{center}
\includegraphics*[bb=114 380 479 739,width=120mm]{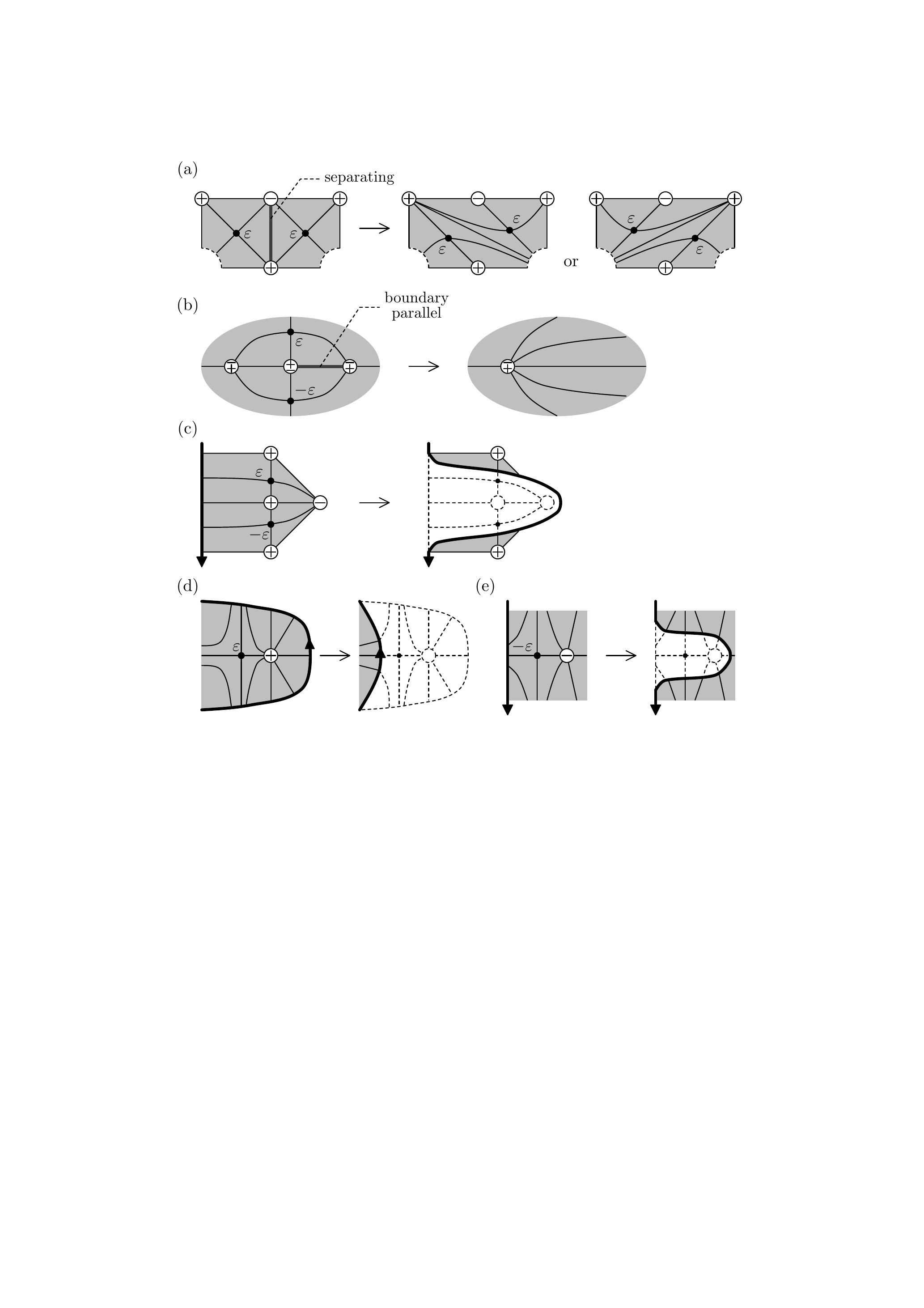}
\caption{Operations on open book foliations:
(a): b-arc foliation change, (b): interior exchange move, (c):boundary-shrinking exchange move, (d): destabilization (of sign $\varepsilon$), and (e): stabilization (of sign $\varepsilon$)}
\label{fig:operation}
\end{center}
\end{figure}

{\bf (a): b-arc foliation change}\\

The b-arc foliation change is an operation which changes the pattern of a region decomposition, designed to reduce the number of hyperbolic points around certain elliptic points. This operation preserves the braid isotopy class of $\partial A$.\\

Here is a precise setting. Assume that two ab- or bb- tiles $R_{1}$ and $R_{2}$ of the \emph{same} sign are adjacent at exactly one \emph{separating} b-arc $b$. Let $v_{\pm}$ be the positive and negative elliptic points which are the endpoints of $b$.
Then by ambient isotopy preserving the binding, one can change $R_{1} \cup R_{2}$ as a union of two new regions $R'_{1} \cup R'_{2}$ so that the number of hyperbolic points around $v_{\pm}$ decreases by one.\\

{\bf(b): Interior exchange move}\\

An interior exchange move, which was simply called an \emph{exchange move} in \cite{ik3}, is an operation that removes four singular points. This operation may change the braid isotopy class of $\partial A$, but preserves the transverse link types. \\

Assume that there exists an elliptic point $v$ contained in exactly two ab- or bb-tiles $R_{1}$ and $R_{2}$ of the \emph{opposite} signs, and that at least one of the common b-arc boundary $b$ of $R_1$ and $R_{2}$ is \emph{boundary parallel}. Then by ambient isotopy preserving the \emph{transverse link type} of $\partial A$ one can remove two hyperbolic points in $R_{1} \cup R_{2}$ and elliptic points which are the endpoints of $b$.\\

{\bf (c): Boundary-shrinking exchange move}\\

A boundary-shrinking exchange move is similar to the interior exchange move. Like interior exchange move, this operation may change the braid isotopy class of $\partial A$, but it preserves the transverse link type. A critical difference is that for a boundary-shrinking exchange move we do not require the common b-arc to be boundary-parallel. (This is the reason why we distinguish two exchange moves in a context of open book foliation.)\\ 

Assume that there exists an elliptic point $v$ contained in exactly two ab-tiles $R_{1}$ and $R_{2}$ of the \emph{opposite} signs. Then by ambient isotopy preserving the \emph{transverse link type} of $\partial A$, one can remove two regions $R_{1} \cup R_{2}$.\\

{\bf (d): Destabilization along a degenerated aa- or as- tile}\\

Let $R$ be a degenerated aa- or as-tile of sign $\varepsilon$, and $v$ is the positive elliptic point in $R$ which lies on a component $C$ of the binding $B$.
Then one can apply a destabilization of sign $\varepsilon$ along $C$ to remove the region $R$. In particular, the transverse link type of $\partial A$ is preserved if $\varepsilon = +$. \\

{\bf (e): Stabilization along an ab- or abs- tile }\\

Let $R$ be an ab- or abs- tile $R$ of sign $-\varepsilon$, and $v$ is the negative elliptic point in $R$ which lies on a component $C$ of the binding $B$. Then by applying stabilization of sign $\varepsilon$ along $C$, we can remove the region $R$. In particular, the transverse link type of $\partial A$ is preserved if $\varepsilon = -$. \\

Since a boundary shrinking exchange move is not discussed in \cite{ik3}, we give a concise explanation. The boundary shrinking exchange move is as a composite of the stabilization along an ab-tile {\bf (e)} and the destabilization along a degenerated aa-tile {\bf (d)}, as shown in Figure \ref{fig:bexmove}. 
The condition $\sgn (R_{1}) \neq \sgn(R_{2})$ guarantees that we are able to choose the signs of stabilizations and destabilizations are positive so the boundary shrinking exchange move preserves the transverse link type.
 
\begin{figure}[htbp]
\begin{center}
\includegraphics*[bb=125 645 471 729, width=120mm]{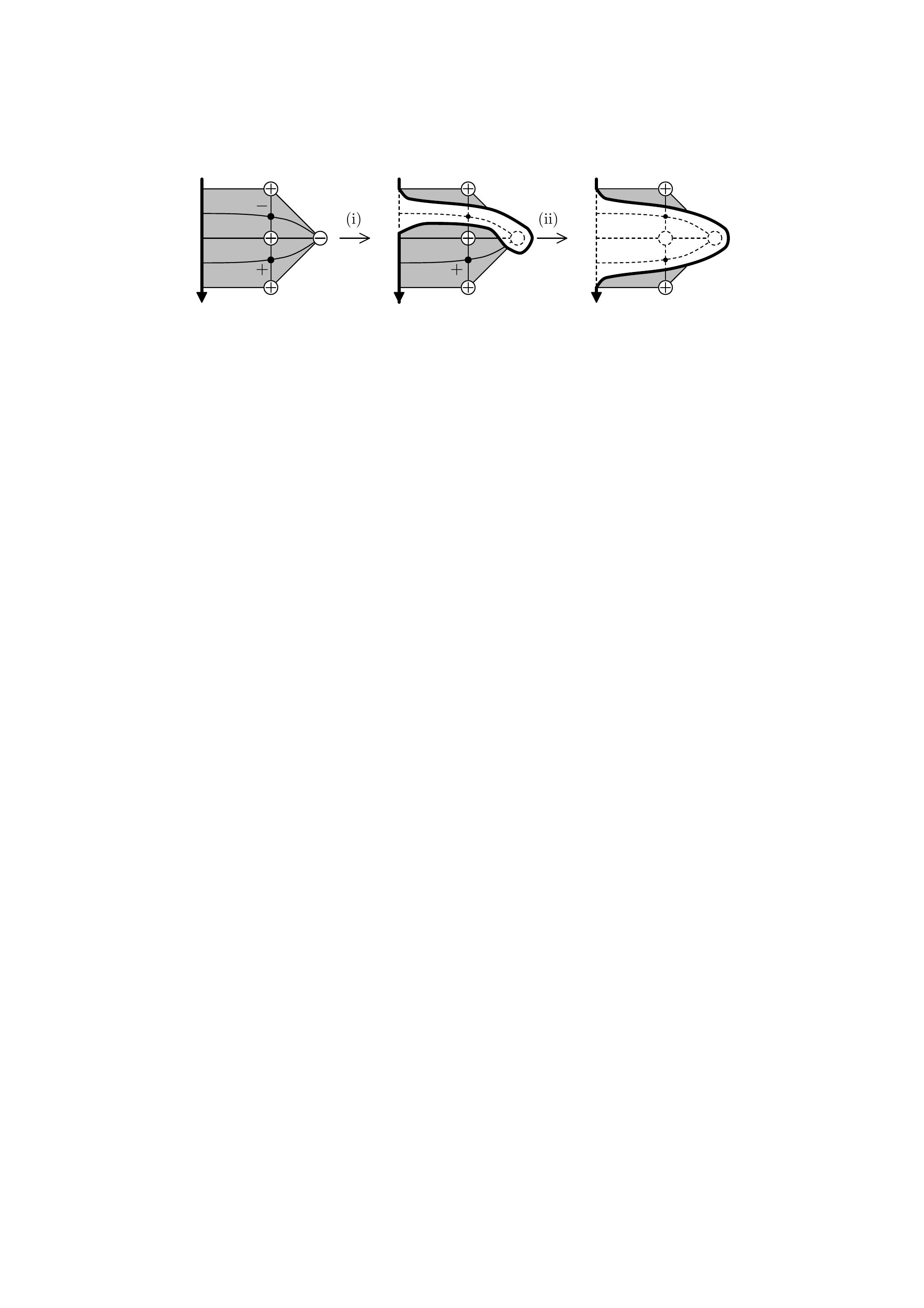}
\caption{Boundary-shrinking exchange move is realized by positive stabilization (i) and positive destabilization (ii).}
\label{fig:bexmove}
\end{center}
\end{figure}

In a 3-dimensional picture, boundary shrinking exchange move can be understood as a move sliding the braid along a part of surface $R_1 \cup R_2$ which forms a ``pocket''. See Figure \ref{fig:bex}.

\begin{figure}[htbp]
\begin{center}
\includegraphics*[bb=107 609 495 732, width=130mm]{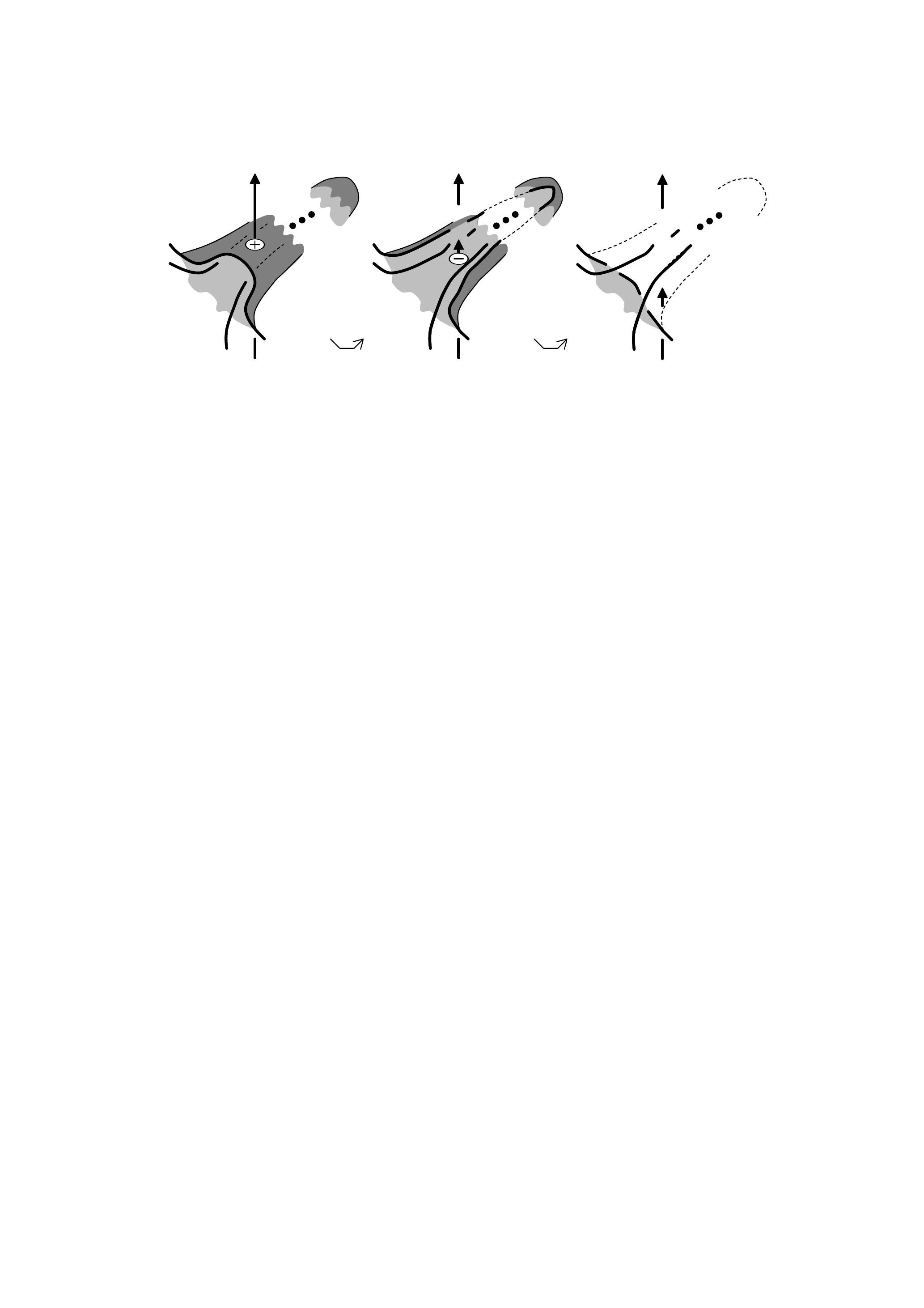}
\caption{Isotopy realizing a boundary shrinking exchange move}
\label{fig:bex}
\end{center}
\end{figure}

In the rest of the paper, we will often simply call an \emph{exchange move} to mean both an interior exchange move and a boundary-shrinking exchange move, if their differences are not important. Also, we say an exchange move is \emph{along $C$} if two elliptic points which will be removed by the move lie on $C$. In such case, the original closed braid and the resulting closed braid after exchange move are $C$-topologically isotopic.

\section{Topologically isotopic closed braids stably cobound annuli}
\label{sec:top}

In this section, we prove a generalization of \cite[Proposition 1.1]{lm}, which asserts that every topologically isotopic closed braids $\widehat{\alpha}$ and $\widehat{\beta}$ in $S^{3}$ cobound embedded annuli, after positively stabilizing $\widehat{\alpha}$ and negatively stabilizing $\widehat{\beta}$. 
This results can be generalized to arbitrary open books and closed braids, without any additional assumptions.

\begin{theorem}
\label{theorem:stablecobound}
If two closed braids $\widehat{\alpha}$ and $\widehat{\beta}$ in an open book $(S,\phi)$ are topologically isotopic, then there exist closed braids $\widehat{\alpha_{+}}$ and $\widehat{\beta_{-}}$ which are positive stabilizations of $\widehat{\alpha}$ and negative stabilizations of $\widehat{\beta}$ respectively, such that $\widehat{\alpha_{+}}$ and $\widehat{\beta_{-}}$ cobound pairwise disjoint embedded annuli $A$.

Moreover, if $\widehat{\alpha}$ and $\widehat{\beta}$ are $C$-topologically isotopic for a distinguished binding component $C$, then 
all stabilizations are stabilizations along $C$, and the cobounding annuli $A$ can be chosen so that they do not intersect with the rest of the binding components, $B-C$.
\end{theorem}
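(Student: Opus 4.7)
The strategy parallels LaFountain--Menasco's proof of \cite[Proposition 1.1]{lm}, with the braid foliation of $S^{3}$ replaced by the open book foliation for a general $(S,\phi)$. First I would start from a topological ambient isotopy $\{h_{t}\}_{t \in [0,1]}$ of $M$ with $h_{0} = \mathrm{id}$ and $h_{1}(\widehat{\alpha}) = \widehat{\beta}$, which under the $C$-topological isotopy assumption can be taken to be supported in $M-(B-C)$. Its track
\[
A^{*} = \{h_{t}(x)\ |\ x \in \widehat{\alpha},\ t \in [0,1]\} \subset M
\]
is an immersed annulus (or union of annuli) cobounded by $\widehat{\alpha}$ and $-\widehat{\beta}$. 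A small generic perturbation of $h_{t}$ then puts $A^{*}$ into general position with respect to the open book: it meets $B$ transversely in finitely many interior points, has finitely many Morse-type tangencies with pages, and has a transverse one-dimensional self-intersection locus.

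For all but finitely many $t$ the intermediate link $h_{t}(\widehat{\alpha})$ is a closed braid, and the finitely many exceptional values correspond to three generic events: (i) a saddle tangency of $A^{*}$ with a page, (ii) $h_{t}(\widehat{\alpha})$ crossing a binding point, or (iii) a birth/death of a self-intersection of $A^{*}$. The core of the proof is to absorb each of these events into a Markov-type modification of the boundary. A page-tangency event (i) is realized by a braid isotopy and requires no stabilization; a binding-crossing event (ii) is absorbed by inserting a single stabilization at the corresponding boundary component; and a self-intersection event (iii) is resolved by a local surgery on $A^{*}$ paired with one stabilization on each of $\widehat{\alpha}$ and $\widehat{\beta}$. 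Carrying this out at every critical $t$ produces closed braids $\widehat{\alpha_{+}}$ and $\widehat{\beta_{-}}$, obtained from $\widehat{\alpha}$ and $\widehat{\beta}$ by the inserted stabilizations, together with an embedded annulus $A$ cobounding them.

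The main obstacle, and the step I expect to require the most care, is the sign bookkeeping. Since $\widehat{\alpha}$ is positively transverse to pages at $t=0$ while $-\widehat{\beta}$ is negatively transverse at $t=1$, the positive normal $\vec n_{A^{*}}$ points in opposite time directions at the two boundary components; this orientation asymmetry should force every inserted stabilization on the $\widehat{\alpha}$ side to be \emph{positive} and every one on the $\widehat{\beta}$ side to be \emph{negative}, mirroring the sign analysis carried out in \cite{lm}. Because every inserted stabilization happens at a point where $h_{t}(\widehat{\alpha})$ meets the binding, the $C$-topological isotopy assumption, which keeps $h_{t}(\widehat{\alpha})$ disjoint from $B-C$, forces all stabilizations to be along $C$ and lets $A$ be arranged disjoint from $B-C$; this gives the ``moreover'' part. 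A clean way to verify the sign claim is to read it off the open book foliation $\mF(A)$ produced at the end via \cite[Theorem 2.5]{ik1-1}: every newly created tile at a binding crossing is an $ab$- or $abs$-tile along $C$ at an elliptic point of the sign forced by transversality, and the stabilization operation (e) of Section \ref{sec:operation} has precisely the compatible sign on each side.
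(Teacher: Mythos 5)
Your high-level goal is right, but there are two genuine gaps, and both sit exactly where the real work of the theorem lies. First, the track $A^{*}$ of an ambient isotopy is only an \emph{immersed} annulus, and your proposed fix for event (iii) --- ``a local surgery on $A^{*}$ paired with one stabilization on each of $\widehat{\alpha}$ and $\widehat{\beta}$'' --- is not justified: surgery along a double curve of an immersed surface changes its homeomorphism type and gives no control over the boundary, so there is no reason the result is again an embedded annulus cobounded by stabilizations of the original braids. The paper avoids immersed surfaces entirely: it subdivides the isotopy into a chain $\widehat{\alpha}\sim_{A}\widehat{\alpha_1}\sim_{A_1}\cdots\sim_{A_k}\widehat{\beta}$ of \emph{embedded} cobounding annuli between intermediate closed braids (each intermediate link is braided by an Alexander trick that extends the annuli), normalized so that each $A_i$ meets $\widehat{\alpha}$ in at most one point, and then proves the theorem by induction on the length of this chain, collapsing the annulus $A_1$ at each step.

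Second, and more seriously, your sign claim is asserted rather than proved, and the asserted mechanism is wrong. The orientation asymmetry between $\widehat{\alpha}$ and $-\widehat{\beta}$ does \emph{not} force a binding-crossing event to have the right sign: an ab- or aa-tile of $\mF(A)$ can carry a hyperbolic point of either sign, so a priori stabilizations (and destabilizations) of both signs occur on both sides; if transversality alone forced the signs, most of the paper's proof would be unnecessary. What the paper actually does is: (i) stabilize the \emph{intermediate} braid $\widehat{\alpha_1}$ with whatever signs are needed to kill all negative elliptic points of $\F(A_1)$; (ii) destabilize away the components of $A_1$ not meeting $\widehat{\alpha}$; (iii) use exchange moves to reorder the signs of the hyperbolic points along the remaining linear string of aa/as-tiles, compiling the negative ones toward $\widehat{\alpha_2}$ and the positive ones toward $\widehat{\alpha}$; (iv) remove the negative hyperbolic points by \emph{negative} stabilizations of the intermediate braid $\widehat{\alpha_2}$ (only at the last inductive stage do such stabilizations land on $\widehat{\beta}$ itself); and (v) remove the positive ones by microflypes, in which a positive destabilization of the intermediate braid at the degenerate aa-tile containing the point $A_1\cap\widehat{\alpha}$ induces a \emph{positive} stabilization of $\widehat{\alpha}$. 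The exchange moves and the microflype are the actual sign-sorting mechanism, and your proposal has no substitute for them. (Your reading of the ``moreover'' clause --- that $C$-topological isotopy keeps everything off $B-C$, hence all elliptic points and all stabilizations are along $C$ --- is correct and matches the paper.)
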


\begin{proof}

First we take a sequence of closed braids and cobounding annuli 
\begin{equation}
\label{eqn:seqproof}
\widehat{\alpha} \sim_{A} \widehat{\alpha_1} \sim_{A_1} \widehat{\alpha_2} \sim_{A_2} \cdots  \sim_{A_{k-1}} \widehat{\alpha_k} \sim_{A_k} \widehat{\beta}
\end{equation}
so that the property 
\begin{enumerate}
\item[($\ast$)] $\widehat{\alpha}$ intersects the $i$-th cobounding annuli $A_i$ with at most one point for each $i\geq 1$
\end{enumerate}
holds.

Such a sequence of cobounding annuli and closed braids are obtained as follows:
Since $\widehat{\alpha}$ and $\widehat{\beta}$ are topologically isotopic, there exists a sequence of links which may not be closed braids,
\begin{equation}
\label{eqn:1stseq} 
\widehat{\alpha} = L_0 \rightarrow L_1 \rightarrow \cdots \rightarrow L_{k-1} \rightarrow L_{k} = \widehat{\beta} 
\end{equation}
such that $L_i \cup (-L_{i+1})$ cobound pairwise disjoint embedded annuli $A'_i$ in $M_{(S,\phi)}$.
By subdividing the sequence (\ref{eqn:1stseq}), we may assume that each $A'_{i}$ intersects $\widehat{\alpha}=L_{0}$ with at most one point.

We inductively modify the sequence (\ref{eqn:1stseq}) to produce the desired sequence of closed braids and cobounding annuli. First we put $\widehat{\alpha_0}= \widehat{\alpha}$ and $A''_0 =A'_0$.

Assume that we have obtained a sequence of links, closed braids and cobounding annuli 
\[ \widehat{\alpha} = \widehat{\alpha_0} \sim_{A} \widehat{\alpha_1} \sim_{A_1} \cdots \sim_{A_{i-1}} \widehat{\alpha_{i-1}} \rightarrow L_{i} \rightarrow L_{i+1} \rightarrow \cdots \rightarrow L_k\]
so that the property ($\ast$) holds and that $\widehat{\alpha_{i-1}} \cup (-L_{i})$ cobound annuli $A''_{i}$ that intersect $\widehat{\alpha}$ with at most one point.

We apply Alexander's trick to $L_{i}$ to get a closed braid $\widehat{\alpha_{i}}$ as follows. With no loss of generality, we may assume that $L_{i}$ is transverse to pages except finitely many points. Assume that some portion $\gamma$ of $L_{i}$ is negatively transverse to pages. 
Then we take a disc $\Delta$ with properties 
\begin{enumerate}
\item The boundary $\partial \Delta$ is a closed 1-braid that is decomposed as union of two arcs, $\partial \Delta = (-\gamma) \cup \gamma'$ and $\Delta \cap L_i = \gamma$.
\item The disc $\Delta$ is positively transverse to the binding at one point. Moreover, the intersection $\Delta \cap B$ lies on  the distinguished binding component $C$, if necessary. 
\item The disc $\Delta$ is disjoint from $\widehat{\alpha} \cup \widehat{\alpha_{i-1}} \cup L_{i+1}$.
\item An interior of the regular neighborhood $N(\gamma)$ of $\gamma$ in the disc $\Delta$ is disjoint from both $A''_{i}$ and $A'_{i+1}$.
\end{enumerate}

Then we replace the link $L_i$ with a new link $(L_i -\gamma)\cup \gamma'$. This removes the negatively transverse portion $\gamma$ of $L_{i}$.
Moreover, by property (3) and (4) above, by attaching $\Delta$ to $A''_i$ or $A'_{i+1}$, we extend the cobounding annuli. Here, if $\Delta$ intersects with the cobounding annuli $A'= A''_i$ or $A'_{i+1}$, we push $A'$ along $\Delta$ to make them disjoint from $\Delta$, as we illustrate in Figure \ref{fig:Atrick} (By (3), we may assume that other types of intersections does not appear). Since $\Delta$ is chosen to be disjoint from $\widehat{\alpha}$, this modification does not produce new intersections with $\widehat{\alpha}$. In particular, the resulting cobounding annuli preserves the property that they intersect $\widehat{\alpha}$ with at most one point.
  
After applying this operation (which we call Alexander's trick) finitely many times, we modify $L_i$ so that it is a closed braid $\widehat{\alpha_i}$, and obtain the cobounding annuli $A_i$ between $\widehat{\alpha_{i-1}}$ and $\widehat{\alpha_i}$, and the cobounding annuli $A''_{i+1}$ between $\widehat{\alpha_i}$ and $L_{i+1}$ as desired. 
Note that if $\widehat{\alpha}$ and $\widehat{\beta}$ are $C$-topologically isotopic, then one can choose the cobounding annuli $A'_i$ so that they are disjoint from $B-C$. Hence we can take all cobounding annuli $A_i$ so that they are disjoint from $B-C$.

\begin{figure}[htbp]
\begin{center}
\includegraphics*[bb=153 569 452 708,width=90mm]{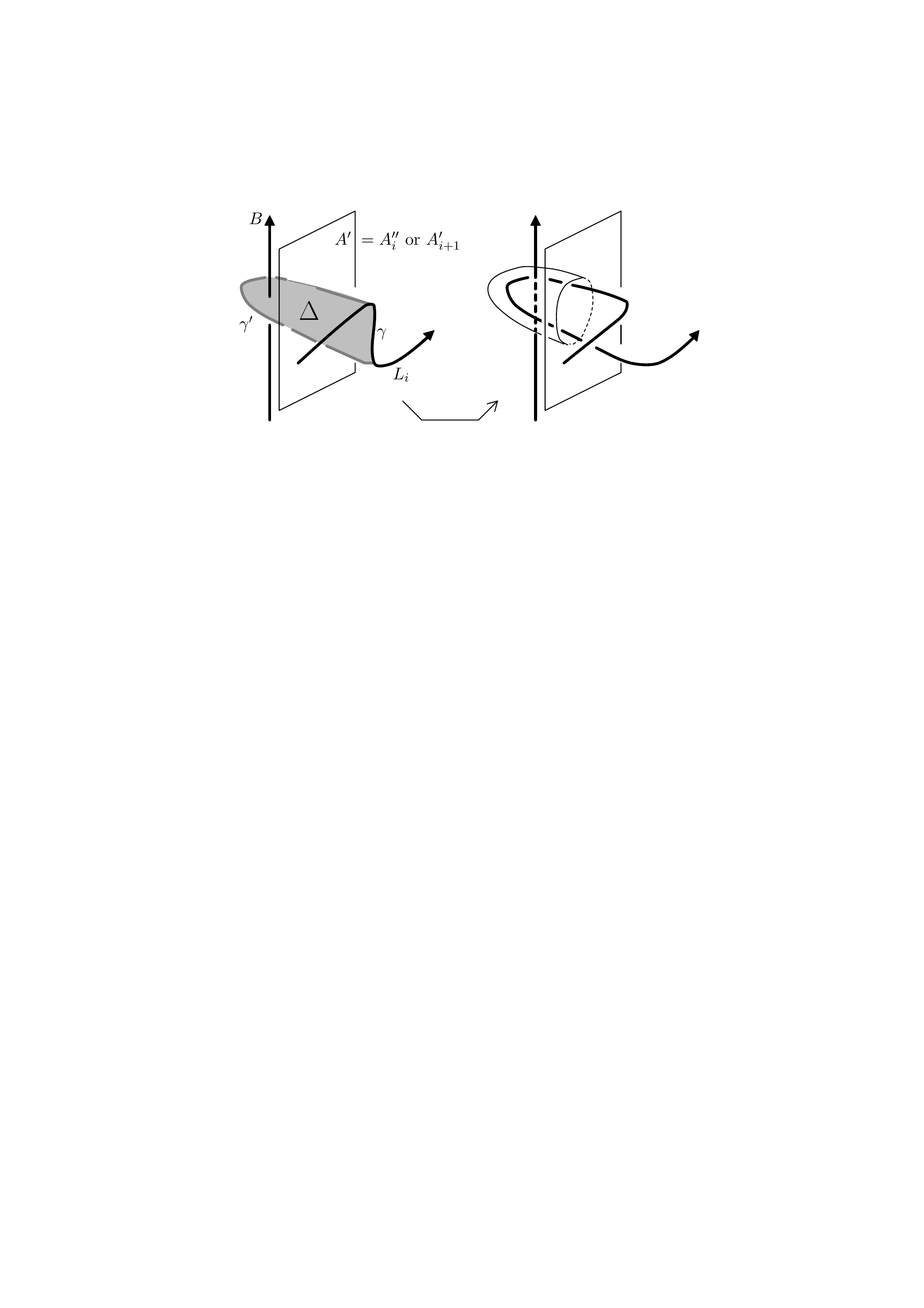}
\caption{Alexander's trick:}
\label{fig:Atrick}
\end{center}
\end{figure}

Now we use a sequence (\ref{eqn:seqproof}) to prove the theorem.
We show that by shrinking the first cobounding annuli $A_1$ appropriately, we obtain a new sequence of closed braids and cobounding annuli with shorter length
\begin{equation}
\label{eqn:newseq}
\widehat{\alpha_+} \sim_{A^*} \widehat{\alpha_{2-}} \sim_{A^*_2}\widehat{\alpha_3} \sim_{A_3}  \cdots \sim_{A_{k-1}} \widehat{\alpha_k} \sim_{A_k} \widehat{\beta}
\end{equation}
where $\widehat{\alpha_+}$ and $\widehat{\alpha_{2-}}$ are the positive and negative stabilizations of $\widehat{\alpha}$ and $\widehat{\alpha_{2}}$ respectively, and the new cobounding annuli $A^{*}, A_2^{*},A_{3},\ldots$ satisfy the property corresponding to ($\ast$),
\begin{enumerate}
\item[($\ast$)] $\widehat{\alpha_{+}}$ intersects the cobounding annuli $A^{*}$, $A_{2}^{*},A_{3},\ldots$ with at most one point.
\end{enumerate}
 Once this is done, an induction on the length of the sequence (\ref{eqn:seqproof}) of cobounding annuli proves the theorem.

In the rest of the proof, we give a construction of shorter sequence (\ref{eqn:newseq}). By Proposition \ref{prop:withoutc}, we can put $A_1$ so that it admits an open book foliation without c-circles. We modify and shrink the annuli $A_1$ in the following five steps. See Figure \ref{fig:summary} for an overview of our construction.\\

\begin{figure}[htbp]
\begin{center}
\includegraphics*[bb= 128 484 527 713,width=140mm]{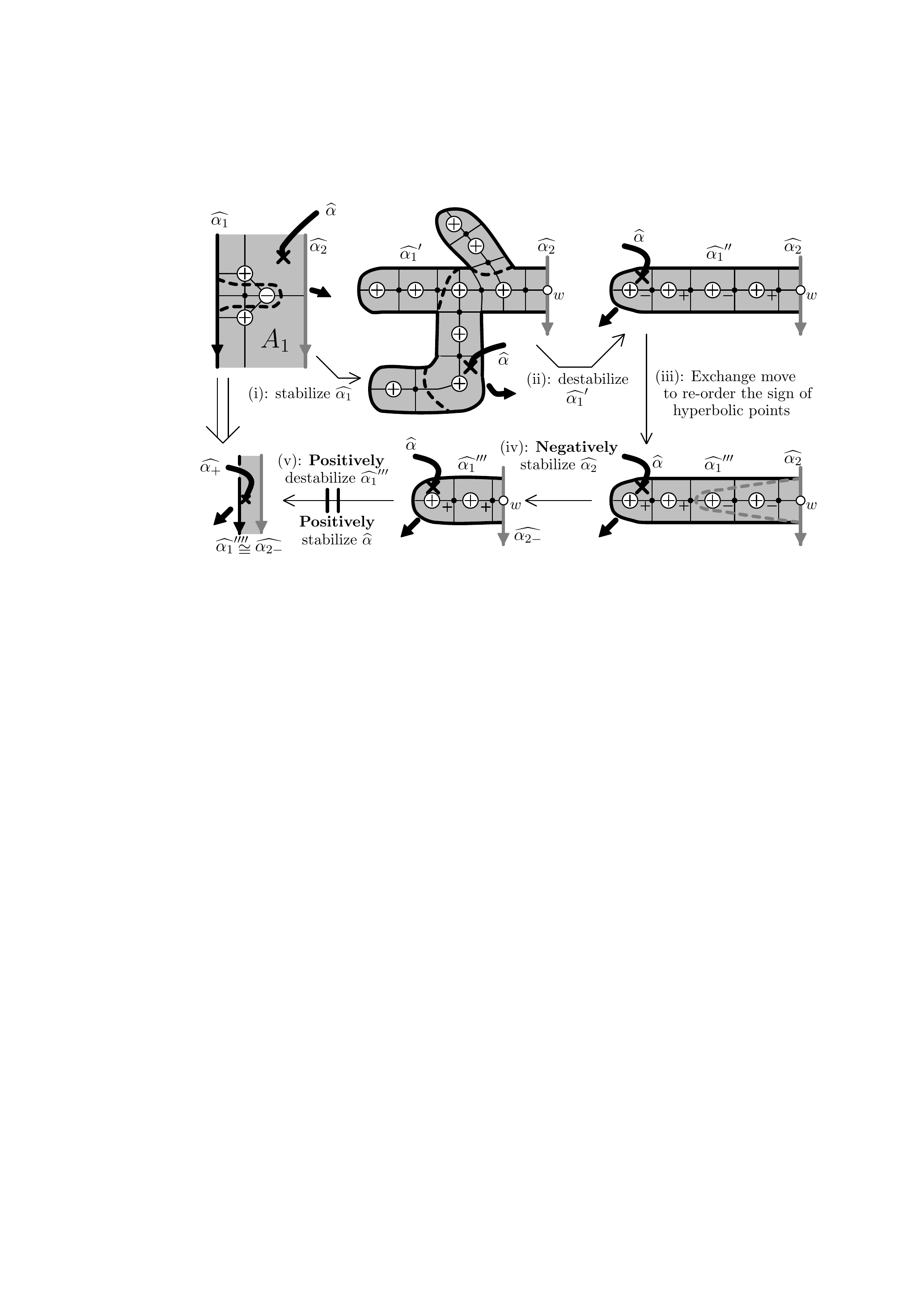}
\caption{Summary: how to get new sequence of cobounding annuli and closed braids $\widehat{\alpha_+}$ and $\widehat{\alpha_{2-}}$.}
\label{fig:summary}
\end{center}
\end{figure}

{\bf (i): Removing negative elliptic points (stabilizations for $\widehat{\alpha_1}$) }\\

In the first two steps, we do not care about the sign of hyperbolic points.
We stabilize $\widehat{\alpha_1}$ along ab- or abs- tiles of $A_1$ (see Section \ref{sec:operation} {\bf (d)}) to remove all negative elliptic points from $\F(A_1)$. We denote the resulting closed braid by $\widehat{\alpha_1}'$.
The cobounding annuli $A$ yield the cobounding annuli $A'$ between $\widehat{\alpha}$ and $\widehat{\alpha_1}'$.
\\

{\bf (ii) Removing degenerated aa-tiles (destabilizations for $\widehat{\alpha_1}'$)}\\

After the step (i), the region decomposition of $\F(A_1)$ consists of only aa- and as-tiles and $A_1$ is a union of discs $D_1,\ldots,D_m$ and strips foliated by s-arcs. We may assume that the intersection point $A_1 \cap \widehat{\alpha}$ lies on $D_1$. 

For each $D_i$, let us consider the tree $G_i$ whose vertices are elliptic points and a point $w$ on $\widehat{\alpha_2}$ which is the end point of a singular leaf, an  whose edges are singular leaves connecting vertices. 

Except $w$, a vertex of valence one in the tree $G_i$ is nothing but an elliptic point contained in a degenerated aa- or as-tile $R$. 
If $R$ does not intersect with $\widehat{\alpha}$, by destabilizing $\widehat{\alpha_1}'$ along $R$ (see Section \ref{sec:operation} {\bf (e)}), we remove $R$ to simplify the disc $D_i$, without affecting $\widehat{\alpha}$.

Since $D_i$ does not intersect with $\widehat{\alpha}$ for $i>1$, by destabilizations we eventually remove $D_i$.
For the disc $D_1$, we destabilize $\widehat{\alpha_1}$ until the unique intersection point $D_1 \cap \widehat{\alpha}$ obstructs.
Let us write the resulting closed braid by $\widehat{\alpha_1}''$. Again, the cobouding annuli $A'$ give the cobounding annuli $A''$ between $\widehat{\alpha}$ and $\widehat{\alpha_1}''$.
\\

{\bf (iii): Re-ordering the sign of hyperbolic points (exchange moves for $\widehat{\alpha_1}''$)}\\

From now on, we carefully look at the sign of hyperbolic points.
After the step (ii), $\F(A_1)$ is a union of a strip foliated by s-arcs and the disc $D_1$. The graph $G_1$ is a linear graph and $D_1$ is a linear string of as- and aa-tiles. We re-order the sign of hyperbolic points in $D_1$ as follows.

Let us consider the situation that there is a positive elliptic point $v$ such that $v$ is contained in two aa-tiles with opposite signs (see Figure \ref{fig:swap}). Let $C$ be the connected component of $B$ on which $v$ lies, and let $N(v) \cong D^{2} \times [-1,1] \subset D^{2}\times C$ be the regular neighborhood of $v$ in $M$. By suitable ambient isotopy, we put two aa-tiles so that their hyperbolic points are contained in $N(v)$. In a ball $N(v)$, we apply the classical exchange move to exchange the over and under strands (this notion make sense, by considering the projection $N(v) \cong D^{2} \times [-1,1] \to D^{2}$). As a consequence, the sign of the two hyperbolic points are swapped.

\begin{figure}[htbp]
\begin{center}
\includegraphics*[bb= 182 468 429 705,width=80mm]{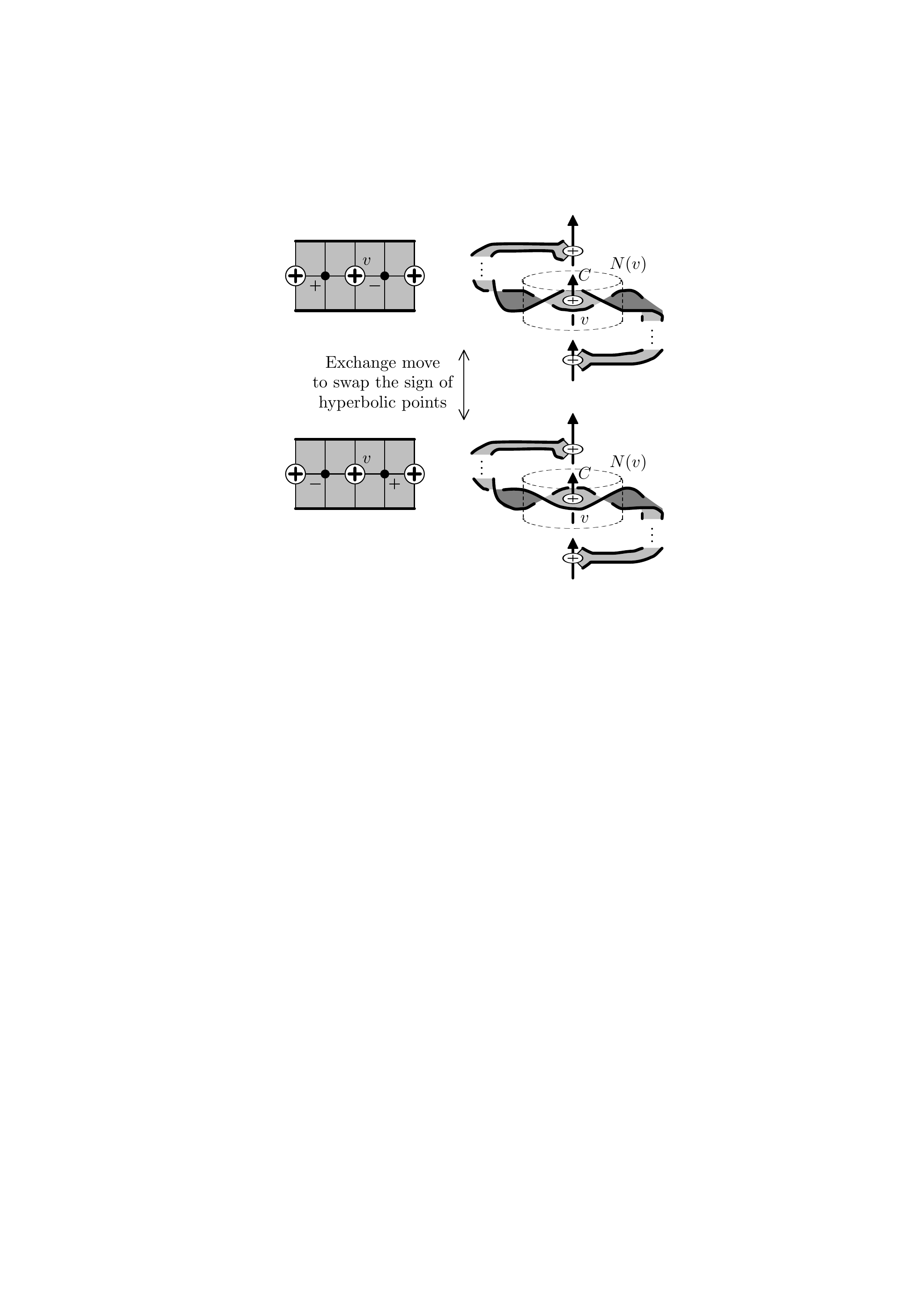}
\caption{Exchange move to swap the sign of adjacent two aa-tiles}
\label{fig:swap}
\end{center}
\end{figure}

Thus, by applying exchange moves for $\widehat{\alpha}''$, we can arrange the sign of hyperbolic points in $D_1$ so that the negative hyperbolic points are compiled to the end $w$ and the positive hyperbolic points are compiled to the other end. We denote the resulting closed braid by $\widehat{\alpha_1}'''$. 
The cobounding annuli $A''$ produce the cobounding annuli $A'''$ between $\widehat{\alpha}$ and $\widehat{\alpha_1}'''$.
\\

{\bf (iv) Removing negative hyperbolic points (negative stabilizations for $\widehat{\alpha_2}$)}\\

After the step (iii), the sign of the as-tile is negative unless the sign of all hyperbolic points in $D_1$ are positive. We negatively stabilize $\widehat{\alpha_2}$ along a negative as-tiles, until the sign of as-tile become positive (see Section \ref{sec:operation} {\bf (d)}). As a consequence, we remove all negative hyperbolic points from $\F(A)$ and we get a new closed braid $\widehat{\alpha_{2-}}$, a negative stabilizations of $\widehat{\alpha_2}$.

The cobounding annuli $A_2$ between $\widehat{\alpha_2}$ and $\widehat{\alpha_3}$ produce the cobouning annuli $A^{*}_2$ between $\widehat{\alpha_{2-}}$ and $\widehat{\alpha_3}$.
 In the construction of $A^{*}_2$, new intersection with $\widehat{\alpha}$ is never created, so $\widehat{\alpha}$ intersects the new cobounding annuli $A^{*}_2$ with at most one point.\\

{\bf (v) Removing positive hyperbolic points (positive stabilizations for $\widehat{\alpha}$)}\\

After the step (iv), all hyperbolic points in $D_1$ are positive.
In the last step, we positively destabilize $\widehat{\alpha_1}$ to shrink the rest of the cobounding annuli $A_1$. Since the degenerated aa-tile in $D_1$ intersects $\widehat{\alpha}$, the destabilization along the degenerated aa-tile causes a change of the closed braid $\widehat{\alpha}$. The change of  $\widehat{\alpha}$ induced by the destabilization is understood as follows. 

Let $v$ be the positive elliptic point in the degenerated aa-tile $R$ and $C$ be the connected point on which $v$ lies. As in the step (iv), let $N(v) \cong D^{2} \times [-1,1] \subset D^{2}\times C$ be the regular neighborhood of $v$ in $M_{(S,\phi)}$. We may assume that $R$ is contained in $N(v)$, and that the cobounding annuli $A_3,\ldots$ does not intersect with $N(v)$, to guarantee that the change of $\widehat{\alpha}$ does not create new intersection points.

As is noted in \cite{lm}, for a link $\widehat{\alpha} \cup \widehat{\alpha_1}'''$,  positive destabilization $\widehat{\alpha_1}'''$ induces a move which is called the  microflype, the simplest flype move in braid foliation theory. (see \cite[Section 2.3, Section 5.3]{BM1}). 
A positive destabilization $\widehat{\alpha_1}'''$ leads to a positive stabilization of $\widehat{\alpha}$. This isotopy of link $\widehat{\alpha} \cup \widehat{\alpha_1}'''$ is supported in $N(v)$.

\begin{figure}[htbp]
\begin{center}
\includegraphics*[bb= 179 593 440 707,width=90mm]{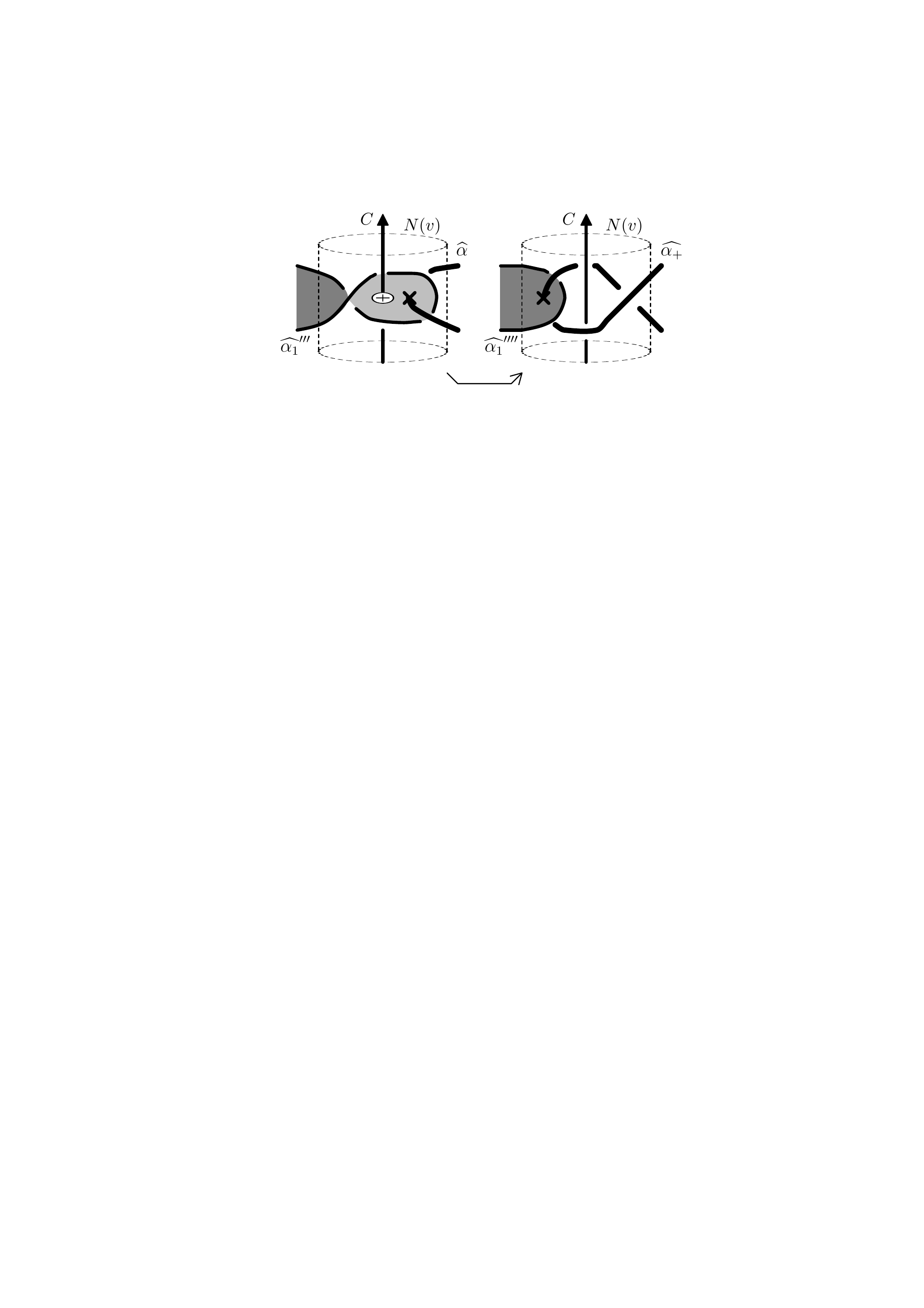}
\caption{Microflype: a positive destabilization of $\widehat{\alpha_1}'''$ induces a positive stabilization of $\widehat{\alpha}$.}
\label{fig:microflype}
\end{center}
\end{figure}

Therefore after applying microflypes which induces positive destabilizations for $ \widehat{\alpha_1}'''$ and positive stabilizations for $\widehat{\alpha}$, we eventually remove all singular points, so $A_1$ is now foliated by s-arcs.
Let us call the resulting closed braids $\widehat{\alpha_1}''''$ and  $\widehat{\alpha_+}'$, respectively. The cobounding annuli $A'''$ give the cobounding annuli $A''''$ between $\widehat{\alpha_+}'$ and $\widehat{\alpha_1}''''$.

Let $N(A_1)$ be a regular neighborhood of $A_1$ in $M_{(S,\phi)}$. 
We put each cobounding annuli $A^{*}_2$, $A_{3},\ldots$ so that the intersection with $\widehat{\alpha_+}'$ does not lie in $N(A_1)$.

Since $A_1$ is foliated by s-arcs, there is an ambient isotopy $\Phi_t: M_{(S,\phi)} \rightarrow M_{(S,\phi)}$ such that:
\begin{itemize}
\item $\Phi_t$ preserves each page of open book.
\item $\Phi_0= \textsf{id}$ and $\Phi_1(\widehat{\alpha''''}) = \widehat{\alpha_{2-}}$.
\item $\Phi_t = \textsf{id}$ outside $N(A_1)$.
\end{itemize}

Let $\widehat{\alpha_{+}} = \Phi_1(\widehat{\alpha_{+}}')$ and $A^{*} = \Phi_{1}(A'''')$. Then $A^{*}$ is a cobounding annuli between $\widehat{\alpha_{+}}$ and $\widehat{\alpha_{2-}}$. Moreover, $\widehat{\alpha_{+}}$ can intersect with each  cobounding annuli $A^{*}_2$, $A_{3},\ldots$ at most one point.
This completes the construction of new sequence (\ref{eqn:newseq}).

\end{proof}

We noticed that Theorem \ref{theorem:stablecobound} shows the Markov theorem for general open book, in a slightly stronger form than stated in \cite{sk}:

\begin{corollary}[Markov Theorem for general open book]
\label{cor:Markov}
If two closed braids $\widehat{\alpha}$ and $\widehat{\beta}$ are topologically isotopic, then they admit a common stabilization:
namely, there exists a sequence of closed braids
\[ \widehat{\alpha} = \widehat{\alpha_0}\rightarrow \widehat{\alpha_1} \rightarrow \cdots \rightarrow \widehat{\alpha_{k}} \cong \widehat{\beta_l} \leftarrow \cdots \leftarrow \widehat{\beta_1}  \leftarrow \widehat{\beta_0}=\widehat{\beta} \]
such that $\widehat{\alpha_{i+1}}$ (resp. $\widehat{\beta_{j+1}}$) is obtained from $\widehat{\alpha_i}$ (resp. $\widehat{\beta_j}$) by a stabilization or a braid isotopy.

Moreover, if $\widehat{\alpha}$ and $\widehat{\beta}$ are $C$-topologically isotopic for some component of the binding $C$, then all stabilizations are chosen to be a stabilization along $C$.
\end{corollary}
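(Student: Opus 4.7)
The plan is to deduce the corollary from Theorem \ref{theorem:stablecobound} by simplifying the cobounding annuli via the operations of Section \ref{sec:operation}.

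First, apply Theorem \ref{theorem:stablecobound} to obtain $\widehat{\alpha_+}$ (a positive stabilization of $\widehat{\alpha}$) and $\widehat{\beta_-}$ (a negative stabilization of $\widehat{\beta}$) cobounding pairwise disjoint embedded annuli $A$; under the $C$-topologically isotopic hypothesis, all these stabilizations are along $C$ and $A$ is disjoint from $B - C$. This already realizes the initial segments of the desired forward sequences from $\widehat{\alpha}$ and from $\widehat{\beta}$, so it remains to produce a common stabilization of $\widehat{\alpha_+}$ and $\widehat{\beta_-}$. By Proposition \ref{prop:withoutc}, isotope $A$ rel $\partial A$ so that $\F(A)$ has no c-circles, and so that no elliptic point lies on $B-C$ in the $C$-topologically isotopic case.

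Next, reduce $\F(A)$ to a foliation consisting entirely of s-arcs by applying the operations of Section \ref{sec:operation} symmetrically to both boundaries of $A$, mirroring Steps (i)--(v) of the proof of Theorem \ref{theorem:stablecobound}: operation (e) stabilizes away ab- and abs-tiles at negative elliptic points on either boundary; operation (d) destabilizes degenerated aa- and as-tiles at positive elliptic points; exchange moves (realized as stabilize-then-destabilize compositions, cf.\ Figure \ref{fig:bexmove}) permute the local configuration; and b-arc foliation changes simplify the b-arc pattern. Once $\F(A)$ has no singular points, each slice $S_t \cap A$ is a disjoint union of s-arcs, and a page-preserving ambient isotopy supported near $A$ (as in the last paragraph of the proof of Theorem \ref{theorem:stablecobound}) provides a braid isotopy between the two resulting boundary braids.

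Finally, reorganize the resulting sequence of moves into forward sequences starting from each of $\widehat{\alpha_+}$ and $\widehat{\beta_-}$. Whenever a destabilization appears on, say, the $\widehat{\alpha_+}$ side, insert a compensating stabilization earlier so that the stabilize-destabilize pair collapses to a braid isotopy; this is the standard observation that the moves form of Markov's theorem implies its common-stabilization form. In the $C$-topologically isotopic case, every elliptic point manipulated lies on $C$, so every stabilization introduced is along $C$. The main obstacle is the foliation-simplification step itself: one must orchestrate the different operations to eliminate every singular point while handling b-arcs and bb-tiles carefully, but the strategy used in the proof of Theorem \ref{theorem:stablecobound} transfers here with the sole modification that we now stabilize and destabilize on both boundaries of $A$ rather than only on one.
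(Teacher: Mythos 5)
Your overall route---invoke Theorem \ref{theorem:stablecobound} to get cobounding annuli and then simplify $\F(A)$ until only s-arcs remain---is the same as the paper's, but you make the simplification step far harder than it needs to be, and in doing so you lean on an unjustified claim. The statement asks for a \emph{common stabilization}: every move going forward from $\widehat{\alpha}$ and from $\widehat{\beta}$ must be a stabilization or a braid isotopy. Your plan uses destabilizations, exchange moves and b-arc foliation changes (mirroring Steps (i)--(v)), and then appeals to ``the standard observation that the moves form of Markov's theorem implies its common-stabilization form'' to reshuffle stabilize/destabilize pairs. That reshuffling is not a formality: one has to show that a destabilization followed by a stabilization can be replaced by a stabilization followed by a destabilization, and here the stabilizations may have different signs and be along different binding components, and the exchange moves (which change the braid isotopy class) must themselves be decomposed into stabilize--destabilize pairs and fed into the same argument. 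None of this is carried out, and it is exactly the content one would need to prove.

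The point you are missing is that the detour is unnecessary because \emph{stabilizations alone suffice}. By Proposition \ref{prop:withoutc} assume $\F(A)$ has no c-circles. Operation {\bf (e)} applied to ab- or abs-tiles stabilizes $\widehat{\alpha}$ to remove negative elliptic points; dually, stabilizing $\widehat{\beta}$ removes positive elliptic points (cf.\ the bookkeeping in the proof of Proposition \ref{prop:sldiff}). Iterating on both boundaries kills all elliptic points, hence all a- and b-arcs, so $\F(A)$ consists of s-arcs and the two boundaries are braid isotopic. Since every move is a stabilization (along $C$ in the $C$-topologically isotopic case, as no elliptic point lies on $B-C$), the common-stabilization form is obtained directly, with no reorganization step. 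This is the paper's proof; I recommend you replace your Steps (i)--(v) mimicry with it.
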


\begin{proof}
By Theorem \ref{theorem:stablecobound}, after stabilizations, $\widehat{\alpha}$ and $\widehat{\beta}$ cobound annuli $A$. As we have seen in Step (i) of the proof of Theorem \ref{theorem:stablecobound}, by stabilizing $\widehat{\alpha}$, we may eliminate all negative elliptic points. Dually, by stabilizing  $\widehat{\beta}$ we eliminate all positive elliptic points, hence eventually $\F(A)$ consists of s-arcs, so two boundaries of $A$ are braid isotopic.  
\end{proof}

We point out how to read the difference of self-linking number from cobounding annuli (Compare with Proposition \ref{prop:slform}).

\begin{proposition}
\label{prop:sldiff}
Assume that two closed braids $\widehat{\alpha}$ and $\widehat{\beta}$ cobound annuli $A$ admitting an open book foliation. Then 
\[
 sl(\widehat{\alpha}) -sl(\widehat{\beta}) = -(e_{+}-e_{-})+(h_{+}-h_{-}).
\]
Here $e_{\pm},h_{\pm}$ denote the number of positive/negative ellipic and hyperbolic points in open book foliation $\F(A)$.
\end{proposition}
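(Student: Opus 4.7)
The plan is to reduce the formula to Proposition~\ref{prop:slform} by producing a Seifert surface for $\widehat{\alpha}$ which contains $A$ as a subsurface. First I would choose a Seifert surface $\Sigma_{\beta}$ of $\widehat{\beta}$, placed by generic isotopy to admit an open book foliation $\F(\Sigma_{\beta})$. Then, by a small further isotopy supported in a thin collar of $\widehat{\beta}$ that contains no singular points of $\F(\Sigma_{\beta})$, I would arrange that the leaves of $\F(\Sigma_{\beta})$ meeting $\widehat{\beta}$ do so at exactly the same points, and in exactly the same pages, as the leaves of $\F(A)$ meeting $\widehat{\beta}$ from the opposite side. Such an alignment is a genericity condition that does not affect the count of elliptic or hyperbolic points of $\F(\Sigma_{\beta})$.

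Next I would form $\Sigma_{\alpha} := \Sigma_{\beta} \cup_{\widehat{\beta}} A$, oriented so that $\partial \Sigma_{\alpha} = \widehat{\alpha}$, and round the corner along $\widehat{\beta}$ to obtain a smooth Seifert surface for $\widehat{\alpha}$. Because $\widehat{\beta}$ is transverse to every page and disjoint from the binding, it carries no elliptic or hyperbolic points of $\F(\Sigma_{\alpha})$, and the singular foliation $\F(\Sigma_{\alpha})$ is obtained by splicing the leaves of $\F(\Sigma_{\beta})$ and $\F(A)$ at their common endpoints on $\widehat{\beta}$. Consequently, no singular point is created or destroyed in the gluing, and
\[
e_{\pm}(\Sigma_{\alpha}) = e_{\pm}(\Sigma_{\beta}) + e_{\pm}(A), \qquad h_{\pm}(\Sigma_{\alpha}) = h_{\pm}(\Sigma_{\beta}) + h_{\pm}(A).
\]
Here the signs on the $A$-side are preserved because the orientation of $A$ as a subsurface of $\Sigma_{\alpha}$ (fixed by $\partial \Sigma_{\alpha} = \widehat{\alpha}$) agrees with its orientation as cobounding annuli (with $\partial A = \widehat{\alpha} \cup (-\widehat{\beta})$).

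Applying Proposition~\ref{prop:slform} to both $\Sigma_{\alpha}$ and $\Sigma_{\beta}$ and subtracting then yields
\[
sl(\widehat{\alpha}) - sl(\widehat{\beta}) = -(e_{+}(A) - e_{-}(A)) + (h_{+}(A) - h_{-}(A)),
\]
which is the claimed identity. The main technical step I expect is the corner-rounding and the verification that the resulting surface $\Sigma_{\alpha}$ genuinely satisfies the defining axioms of an open book foliation; this is a purely local issue near the smooth curve $\widehat{\beta}$, and the alignment arranged in the first step already makes the two foliations match across $\widehat{\beta}$, so a standard smoothing argument should suffice.
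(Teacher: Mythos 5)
Your route is genuinely different from the paper's: you try to deduce the formula from Proposition~\ref{prop:slform} by capping $A$ off with a Seifert surface of $\widehat{\beta}$, whereas the paper uses Corollary~\ref{cor:Markov} to kill all singular points of $\F(A)$ by stabilizing both boundary components until $A$ is foliated by s-arcs, and then solves a small linear system relating the numbers $a_{\pm}, b_{\pm}$ of stabilizations of each sign to $e_{\pm}, h_{\pm}$ (each stabilization of $\widehat{\alpha}$ removes one negative elliptic point and one hyperbolic point whose sign is determined by the sign of the stabilization, and dually for $\widehat{\beta}$), together with $sl(\widehat{\alpha})-sl(\widehat{\beta}) = 2(a_{-}-b_{-})$.

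The gap in your argument is embeddedness of $\Sigma_{\alpha}=\Sigma_{\beta}\cup_{\widehat{\beta}}A$, which you do not address: you identify the corner-rounding along $\widehat{\beta}$ as the main technical point, but that local issue is the harmless one. Globally, $\Sigma_{\beta}$ and $A$ will in general intersect in their interiors, and this is not a removable accident of general position. Indeed, whenever $lk(\widehat{\alpha},\widehat{\beta})\neq 0$ (e.g.\ when $\widehat{\alpha}$ is a transverse push-off of $\widehat{\beta}$ across $A$, where this linking number is essentially $sl(\widehat{\beta})$), every Seifert surface of $\widehat{\beta}$ meets $\widehat{\alpha}\subset\partial A$, so $\Sigma_{\beta}\cap A$ strictly contains $\widehat{\beta}$ and $\Sigma_{\alpha}$ is only an immersed $2$-chain, not an embedded Seifert surface. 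Proposition~\ref{prop:slform} is stated (and proved) for embedded surfaces admitting an open book foliation, so it cannot be applied to $\Sigma_{\alpha}$ as constructed; resolving the intersection circles and arcs by cut-and-paste would alter the foliation and the counts $e_{\pm},h_{\pm}$ in a way you would then have to control. The paper's stabilization argument avoids this entirely by never leaving the annulus $A$.
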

\begin{proof}
By Corollary \ref{cor:Markov}, stabilizations of $\widehat{\alpha}$ and $\widehat{\beta}$ remove all singular points on $A$, and gives rise to a common stabilization, say $\widehat{\gamma}$.
Let $a_{\pm}$ (resp. $b_{\pm}$) be the number of positive and negative stabilizations to get $\widehat{\gamma}$ from $\widehat{\alpha}$ (resp. $\widehat{\beta}$). Since the positive stabilization preserves the self-linking number whereas the negative stabilization decreases the self-linking number by two, $sl(\widehat{\gamma}) = sl(\widehat{\alpha}) - 2 a_{-} = sl(\widehat{\beta})-2b_{-}$ hence $ sl(\widehat{\alpha}) -sl(\widehat{\beta}) = 2(a_{-}-b_{-})$.

On the other hand, one positive (resp. negative) stabilization of $\widehat{\alpha}$ removes one negative elliptic point and one negative (resp. positive) hyperbolic point. Similarly, one positive (resp. negative) stabilization of $\widehat{\beta}$ removes one positive elliptic point and one positive (resp. negative) hyperbolic point.
This implies 
\[ e_{-}=a_{+} + a_{-}, \  e_{+}=b_{+} + b_{-} ,\  h_{+}=a_{-} + b_{+}, \ h_{-}=a_{+} + b_{-}, \]
which show
\[ a_{-}-b_{-} = -e_{+}+h_{+} = e_{-}-h_{-}.\]

\end{proof}

\section{Proof of generalization of Jones-Kawamuro conjecture }
\label{sec:proof}

In this section we prove a generalization of Jones-Kawamuro conjecture. 
We prove the following theorem.

\begin{theorem}
\label{theorem:commondestab}
Let $\widehat{\alpha}$ and $\widehat{\beta}$ be closed braids in an open book $(S,\phi)$ that cobound pairwise disjoint embedded annuli $A$. Assume that the cobounding annuli $A$ and the open book $(S,\phi)$ satisfies the following three conditions.
\begin{description}
\item[C-Top] All intersections between $A$ and the binding $B$ lie on the distinguished binding component $C$. 
\item[Planar] The page $S$ is planar.
\item[FDTC] $|c(\phi,C)| >1$.
\end{description}

Then there exists closed braids $\widehat{\alpha_{0}}$ and  $\widehat{\beta_{0}}$ such that
\begin{enumerate}
\item $\widehat{\alpha_0}$ is obtained from $\widehat{\alpha}$ by braid isotopy, exchange moves and destabilizations along $C$.
\item $\widehat{\beta_0}$ is obtained from $\widehat{\beta}$ by braid isotopy, exchange moves and destabilizations along $C$.
\item $n(\widehat{\alpha_0}) = n(\widehat{\beta_0})$ and $sl(\widehat{\alpha_0})= sl(\widehat{\beta_0})$.
\end{enumerate}
\end{theorem}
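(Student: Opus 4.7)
The strategy is to simplify the open book foliation $\F(A)$ on the cobounding annuli, using the operations summarized in Section \ref{sec:operation}, until $\F(A)$ consists entirely of s-arcs. Once $\F(A)$ has no singular points, its two boundary components are braid isotopic, so their braid indices and self-linking numbers agree (this also follows directly from Proposition \ref{prop:sldiff} with $e_\pm = h_\pm = 0$). The condition {\bf [C-Top]} guarantees that every elliptic point of $\F(A)$ lies on $C$, so every destabilization and every exchange move occurring in the reduction automatically takes place along $C$, which is exactly what the conclusion of the theorem allows.

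The reduction proceeds in stages. First I would apply Proposition \ref{prop:withoutc} to isotope $A$ rel boundary so that $\F(A)$ contains no c-circles and no elliptic points outside $C$; the two lemmas of Section \ref{sec:c-circles} are needed here to control the interaction of c-circles with the remaining operations. After this step every region is an aa-, ab-, bb-, as- or abs-tile. Next I would apply the following local reductions whenever possible: if an elliptic point $v$ is contained in two opposite-sign tiles sharing a boundary-parallel b-arc, an interior exchange move (operation (b)) removes four singular points; if $v$ lies in two opposite-sign ab-tiles, a boundary-shrinking exchange move (operation (c)) removes two; and a b-arc foliation change (operation (a)) decreases the valence of $v$ whenever two same-sign ab- or bb-tiles meet at a separating b-arc. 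Any degenerated aa- or as-tile yields a destabilization along $C$ (operation (d)). Each such move strictly decreases a suitable complexity, for instance the lexicographic pair consisting of the total number of singular points and, secondarily, the number of non-strongly-essential elliptic points, so the process terminates.

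When the process terminates $\F(A)$ is in what I will call \emph{normal form}: every elliptic point is strongly essential, no degenerated aa- or as-tile exists, and no exchange move or foliation change applies. I claim that a normal form carrying at least one singular point cannot exist under {\bf [Planar]} and {\bf [FDTC]}. Pick an elliptic point $v$ on $C$. If only b-arcs are incident to $v$, Lemma \ref{lemma:FDTC} applies and gives, depending on $\sgn(v)$, either $-n \leq c(\phi,C) \leq p$ or $-p \leq c(\phi,C) \leq n$, where $p,n$ count positive and negative hyperbolic points around $v$. Since $|c(\phi,C)|>1$, at least two hyperbolic points of some sign occur around $v$. The planarity of $S$ then forces the b-arcs at $v$ to be linearly ordered on each page, so an innermost argument produces two adjacent tiles around $v$ that either share the same sign across a separating b-arc (contradicting normality via operation (a)) or have opposite signs and share a boundary-parallel b-arc (contradicting normality via operation (b) or (c)). If an a-arc or s-arc is incident to $v$, one propagates the analysis along the relevant boundary arc on $A$ using planarity, again producing a reducing configuration and contradicting normality.

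The main obstacle is this last step: proving that no normal-form foliation can carry a singular point. The delicate part is reconciling the FDTC lower bound on the signed valence at $v$ with the planarity-driven rigidity that forbids precisely the configurations $|c(\phi,C)|>1$ would demand, while also handling residual c-circles and the behavior of b-arcs that straddle the annulus. I expect the c-circle lemmas of Section \ref{sec:c-circles} to carry most of the weight in this step, supplying the combinatorial control that makes the planarity argument work and ensuring that no reducing move we perform reintroduces the configurations we have just eliminated.
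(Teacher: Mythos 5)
Your overall framework (induction on singular points, local reductions via the operations of Section \ref{sec:operation}, with {\bf [C-Top]} guaranteeing all moves happen along $C$) matches the paper, and your treatment of valence-two elliptic points via Lemma \ref{lemma:FDTC} is essentially the paper's {\bf Case (iii)}. But there are two genuine gaps.

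The decisive one is your claim that ``a normal form carrying at least one singular point cannot exist.'' This is false, and the paper's proof turns on exactly this point. After the Euler characteristic equality (\ref{eqn:euler}) forces all elliptic points to be of type $(1,2)$ or $(0,4)$, there remains an irreducible configuration: the \emph{tiling with alternate signs} (Figure \ref{fig:standannuli}), in which the signs of hyperbolic points alternate around every $(0,4)$ point and $\sgn(R_{ab}^{1})=\sgn(R_{ab}^{2})\neq\sgn(R_{bb})$ around every $(1,2)$ point. No exchange move, foliation change, or destabilization applies to it, yet it carries many singular points. The theorem is nevertheless true for it, but by a different argument: the symmetry of the tiling shows that stabilizing $\widehat{\alpha}$ and $\widehat{\beta}$ the \emph{same} number of times with the \emph{same} sign makes them braid isotopic, whence $n(\widehat{\alpha})=n(\widehat{\beta})$ and $sl(\widehat{\alpha})=sl(\widehat{\beta})$ already hold (one takes $\widehat{\alpha_0}=\widehat{\alpha}$, $\widehat{\beta_0}=\widehat{\beta}$; note the theorem does not permit stabilizations in producing $\widehat{\alpha_0}$, so this terminal case cannot be absorbed into the reduction). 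Your proof cannot succeed without supplying this case, and no innermost/planarity argument will eliminate it.

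The second gap is your handling of c-circles. Proposition \ref{prop:withoutc} lets you start without c-circles, but as Remark \ref{rem:c-circle} explains, interior exchange moves can reintroduce them, and removing them again would increase the singular point count and break the induction. The paper instead shows (Lemma \ref{lemma:essentialc}) that any c-circle must be homotopic to the core of $A$, locates a sub-annulus $A'$ made of two degenerated ac-annuli, applies Lemma \ref{lemma:degac} to conclude $n=1$ and equal $sl$ for $\partial A'$, and then runs the induction separately on the two complementary sub-annuli $A_\alpha$ and $A_\beta$. Your proposal gestures at the c-circle lemmas ``carrying most of the weight'' but does not contain this splitting argument, which is where they are actually used.
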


The assumptions {\bf [C-Top]} and {\bf [Planar]} leads to the following properties of open book foliations of cobounding annuli, which allow us to perform b-arc foliation change freely.

\begin{lemma}
\label{lemma:key}
Under the assumptions of {\bf [C-Top]} and {\bf [Planar]}, 
\begin{enumerate}
\item All b-arcs of $A$ are separating.
\item If $v$ is an elliptic point such that all leaves that end at $v$ are b-arcs,  then around $v$ there must be both positive and negative hyperbolic points \cite[Lemma 7.6]{ik3}.
\end{enumerate}
\end{lemma}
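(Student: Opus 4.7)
The plan is to handle the two parts separately. Part (2) is essentially a direct citation: the hypotheses involve only the local foliation picture around the elliptic point $v$, which is unchanged in our setting, so \cite[Lemma 7.6]{ik3} applies verbatim. I would simply invoke it.

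For part (1), the strategy is a short cut-and-paste argument in the page. Let $b \subset S_t$ be a b-arc, with endpoints the two elliptic points $v_+$ and $v_-$ on the binding. By \textbf{[C-Top]} every elliptic point of $\F(A)$ sits on the distinguished component $C$, so $v_+$ and $v_-$ both lie on the single boundary circle $C \subset \partial S_t$. I can therefore close $b$ up into a simple closed curve $\gamma = b \cup c$ in $S_t$ by attaching a subarc $c \subset C$ joining $v_+$ to $v_-$.

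Now I invoke planarity: by \textbf{[Planar]} the page $S_t$ embeds in $S^2$, and the Jordan curve theorem partitions $S^2 \setminus \gamma$ into two open disks $U_1$ and $U_2$. Since $b$ is a properly embedded interior arc of $S_t$, a small two-sided collar of any interior point of $b$ meets both $U_1$ and $U_2$, so $S_t \cap U_1$ and $S_t \cap U_2$ are both nonempty; that is, $\gamma$ separates $S_t$. To upgrade this to the statement that $b$ alone separates $S_t$, I observe that the interior of $c$ lies in $\partial S_t$, so the only collar of $c$ inside $S_t$ is one-sided and therefore lies entirely on one side of $\gamma$. Consequently, removing only $b$ (and leaving the interior of $c$ in place) does not reconnect $S_t \cap U_1$ and $S_t \cap U_2$, so $S_t \setminus b$ is disconnected.

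I do not foresee any real obstacle: the content of (1) is pure planar topology once the translation $b \leadsto \gamma$ is made, and the only genuine use of \textbf{[C-Top]} is to ensure both endpoints of $b$ lie on the same boundary component, without which the closing-up step would fail to produce a simple closed curve on $S_t$ and the Jordan argument would not apply.
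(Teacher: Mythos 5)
Your proof of (1) is correct and is exactly the paper's argument: the paper disposes of (1) with the remark that a properly embedded arc in a planar surface whose two endpoints lie on the same boundary component is separating, and your Jordan-curve argument simply writes that fact out in detail (with \textbf{[C-Top]} used, as you say, only to put both endpoints of the b-arc on $C$). For (2) your conclusion is right but your justification is slightly off: the hypothesis of \cite[Lemma 7.6]{ik3} is not purely local around $v$ --- it requires the b-arcs ending at $v$ to be separating in the page, a global condition on $S_t$, and this is precisely what part (1) supplies (the paper itself notes that (2) ``essentially follows from (1)''). Since you have proven (1), this is only an expository slip, but the logical dependence of (2) on (1) should be made explicit rather than claiming the cited lemma applies verbatim on local grounds.
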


Note that the statement (1) is nothing but a simple fact that if two endpoints of a properly embedded arc $b$ in a planar surface lie on the same component, then $b$ is separating. Also, the assertion (2) essentially follows from (1).

In the proof of Theorem \ref{theorem:commondestab}, we need to treat cobounding annuli with c-circles (see Remark \ref{rem:c-circle}), and we use the following two results, which will be proven in Section \ref{sec:c-circles}. 

First, we observe c-circles should be essential in the cobounding annuli $A$.

\begin{lemma}
\label{lemma:essentialc}
Under the assumptions of Theorem \ref{theorem:commondestab}, the cobounding annuli $A$ does not contain a c-circle which is null homotopic in $A$.
\end{lemma}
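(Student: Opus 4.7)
The plan is to argue by contradiction. Suppose $A$ contains a null-homotopic c-circle, and pick one $c$ that is \emph{innermost} in $A$, so that $c$ bounds a sub-disc $D \subset A$ with no c-circles in its interior. Since $D$ sits in the interior of $A$, the foliation $\F(D)$ contains no a-arcs or s-arcs, and by the innermost choice no c-circle of $\F(D)$ lies in the interior of $D$. Hence every regular leaf of $\F(D)$ is either the boundary c-circle $c$ or a b-arc, and by {\bf [C-Top]} every elliptic point of $\F(D)$ lies on $C$. Poincar\'e--Hopf applied to $D$ (whose boundary is a regular leaf) gives $e - h = \chi(D) = 1$, where $e$ and $h$ denote the total numbers of elliptic and hyperbolic points of $\F(D)$.

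Next I would pin down the tile count inside $D$. Moving transversally off $c$ into the interior of $D$, the foliation cannot consist of c-circles parallel to $c$ (they would violate innermost-ness), so the first tangency encountered must be a bc-tile that consumes $c$. Since any subsequent bc-tile in $D$ would require an additional c-circle in $D$, the region decomposition of $\F(D)$ consists of exactly one bc-tile and $h-1$ bb-tiles. Lemma \ref{lemma:key}(1) ensures that all b-arcs in $\F(A)$ are separating in their respective pages, so b-arc foliation changes (Operation (a) in Section \ref{sec:operation}) can be applied freely inside $D$; using this I would arrange that every elliptic point of $D$ is strongly essential.

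With every elliptic point of $D$ strongly essential, Lemma \ref{lemma:FDTC} combined with $|c(\phi, C)| > 1$ gives asymmetric bounds on the positive and negative hyperbolic points around each elliptic point $v \in D$: at least two of one sign (depending on $\sgn(v)$ and the sign of $c(\phi,C)$) and, via Lemma \ref{lemma:key}(2), at least one of the other sign, giving valence at least $3$ at every $v$. Counting elliptic--hyperbolic incidences from the tile structure gives $\sum_v k_v = 4(h-1) + 2 = 4h - 2$, which combined with $\sum_v k_v \geq 3e = 3(h+1)$ forces $h \geq 5$. To refine this into a strict contradiction I would split the incidence count by sign (say $\sum_v p(v) \geq 2 e_+ + e_-$ and $\sum_v n(v) \geq e_+ + 2 e_-$ when $c(\phi,C) > 1$) and match them against $4 h_+^{bb} + 2 h_+^{bc}$ and $4 h_-^{bb} + 2 h_-^{bc}$ respectively. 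The unique bc-tile contributes to only one of these two sides, and this asymmetry, together with the FDTC-driven local constraints at the two elliptic corners $v_1, v_2$ of the bc-tile, is intended to rule out the configuration.

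The main obstacle is closing out this final counting step: the bare inequality $h \geq 5$ is not itself a contradiction, so one must genuinely exploit the rigidity forced by the solitary bc-tile together with the sign-distinguished FDTC inequality at every elliptic point, perhaps iterating exchange moves to reduce to a small enough configuration where the contradiction becomes visible. A secondary concern is that the preliminary simplification by b-arc foliation change must be performed without disturbing the innermost status of $c$ or the {\bf [C-Top]} condition on the surrounding annulus $A$.
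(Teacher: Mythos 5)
Your setup matches the paper's: an innermost null-homotopic c-circle bounds a disc $D$ whose region decomposition is one bc-annulus plus bb-tiles, and an Euler-characteristic count over $D$ is the right combinatorial tool. But the argument breaks at the step where you propose to ``arrange that every elliptic point of $D$ is strongly essential'' by b-arc foliation changes. That operation rearranges the region decomposition and reduces valences; it does not convert boundary-parallel b-arcs into strongly essential ones, and nothing in the machinery lets you force strong essentiality. Worse, the logic you want runs backwards relative to what the hypotheses actually give: under {\bf [FDTC]}, Lemma \ref{lemma:FDTC} shows that an elliptic point of valence two with one positive and one negative hyperbolic point around it (the configuration guaranteed by Lemma \ref{lemma:key}(2)) \emph{cannot} be strongly essential, since otherwise $-1\leq c(\phi,C)\leq 1$. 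That non-essentiality is precisely what licenses an \emph{interior exchange move} to delete the elliptic point. The paper's proof is an induction: the Euler count $2V(2)+V(3)=8+\sum_{i\geq 4}(i-4)V(i)$ on the sphere obtained from $D$ minus the bc-annulus produces a vertex of valence $\leq 3$ away from the bc-annulus's corners, b-arc foliation change (legal by Lemma \ref{lemma:key}(1)) lowers it to valence two, the exchange move removes it, and one terminates at a degenerate bc-annulus, which is excluded by a lemma from the operations paper forcing $|c(\phi,C)|\leq 1$.

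Your fallback counting argument also does not close, as you acknowledge. Even granting strong essentiality everywhere, the incidence count $4(h-1)+2\geq 3(h+1)$ only yields $h\geq 5$, and the sign-split refinement $4h_{+}^{bb}+2h_{+}^{bc}\geq 2e_{+}+e_{-}$, $4h_{-}^{bb}+2h_{-}^{bc}\geq e_{+}+2e_{-}$ sums back to the same inequality; configurations with all inequalities tight (e.g.\ $h=5$, $e=6$, every vertex of valence exactly three) are not ruled out by counting alone. So the proposal is missing the actual contradiction mechanism: it is not a global count that kills the disc, but the iterated removal of low-valence elliptic points via exchange moves, driven by the fact that {\bf [FDTC]} forbids such points from being strongly essential.
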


Second, we observe that for a planar open book, if cobounding annuli with c-circles is the simplest, then Theorem \ref{theorem:commondestab} is true.

\begin{lemma}
\label{lemma:degac}
Let $(S,\phi)$ be a planar open book.
Assume that closed braids $\widehat{\alpha}$ and $\widehat{\beta}$ representing a knot cobound an annulus $A$ consisting of two degenerated ac-annuli (see Figure \ref{fig:twodegac}). Then $n(\widehat{\alpha})=n(\widehat{\beta})=1$ and $sl(\widehat{\alpha})=sl(\widehat{\beta})$.

\begin{figure}[htbp]
\begin{center}
\includegraphics*[bb= 196 598 401 708,width=55mm]{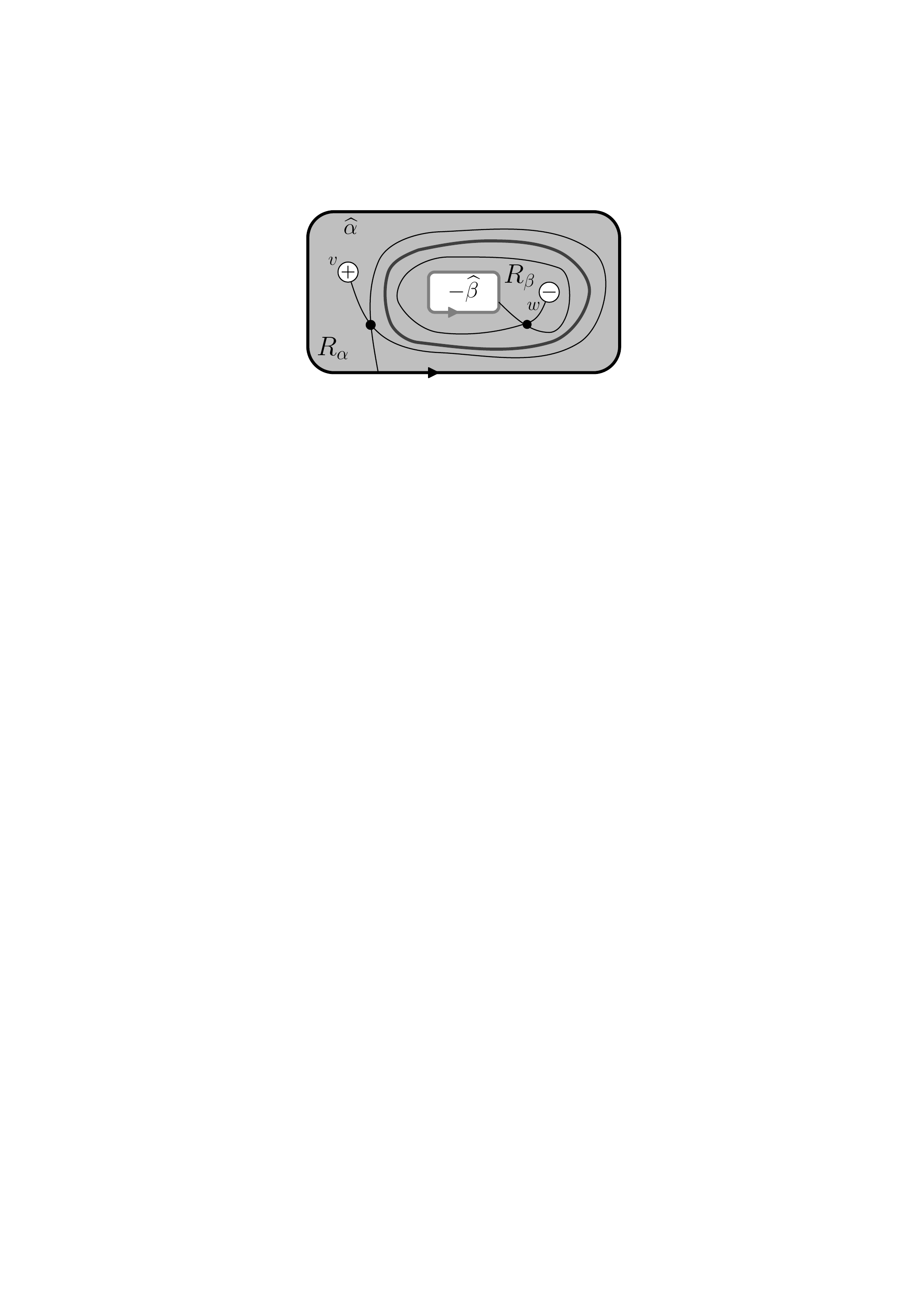}
\caption{Special case: cobounding annulus consisting of two degenerated ac-annuli}
\label{fig:twodegac}
\end{center}
\end{figure}
\end{lemma}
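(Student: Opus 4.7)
The plan is to read off both conclusions directly from the structure of $\F(A)$ shown in Figure~\ref{fig:twodegac}. For the braid indices, each degenerated ac-annulus contains exactly one elliptic point on $C$---the positive one $v_{+}$ on the $\widehat{\alpha}$-side and the negative one $v_{-}$ on the $\widehat{\beta}$-side---together with a single hyperbolic point. Each piece is an annulus cobounded by its braid component and the interior c-circle $\gamma$ shared with the other piece; it is foliated by a-arcs running from its elliptic point out to the braid, plus c-circles parallel to $\gamma$. Since $\widehat{\alpha}$ is connected and a generic page $S_{t}$ intersects the $\widehat{\alpha}$-side in a single a-arc with one endpoint on $\widehat{\alpha}$ (while the c-circles contribute nothing to $S_{t}\cap\widehat{\alpha}$), we obtain $n(\widehat{\alpha})=1$, and symmetrically $n(\widehat{\beta})=1$.

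For the equality of self-linking numbers I would apply Proposition~\ref{prop:sldiff}. Since $e_{+}=e_{-}=1$, the formula collapses to $sl(\widehat{\alpha})-sl(\widehat{\beta}) = h_{+}-h_{-}$. As $\F(A)$ has exactly two hyperbolic points, one in each ac-piece, it suffices to verify that the two carry opposite signs.

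To do so, I would use that the c-circle leaves of $\F(A)$ form a single parallel family of isotopic copies of $\gamma$, present only for $t$ in a single open arc of $S^{1}$, whose two endpoints are precisely the two hyperbolic $t$-values. At one endpoint this family is born and at the other it dies, so the describing arcs of the two hyperbolic points lie on opposite sides of the c-circle family. Since the positive normal $\vec n_{A}$ is globally consistent on the orientable annulus $A$ while the relationship between $\vec n_{A}$ and the direction of the fibration is reversed at a birth versus a death of the same c-circle family, the two hyperbolic points carry opposite signs. The main obstacle is making this sign-opposition step fully rigorous; I would handle it by writing down an explicit local model of a degenerated ac-annulus and directly comparing $\vec n_{A}$ with the fibration direction at each hyperbolic point, from which the lemma follows via Proposition~\ref{prop:sldiff}.
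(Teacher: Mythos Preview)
Your braid-index argument is fine and agrees with the paper. The gap is in the sign-opposition step, and it is a genuine one: your birth/death heuristic never invokes planarity of $S$, yet planarity is essential here. Indeed, Lemma~\ref{lemma:cexam0} constructs, for \emph{any} non-planar page $S$ and \emph{any} monodromy $\phi$, a cobounding annulus consisting of two degenerated ac-annuli with $\sgn(R_{\alpha})=\sgn(R_{\beta})$. So the claim ``birth and death of the same c-circle family forces opposite signs'' is simply false in general; writing down a local model of a degenerated ac-annulus cannot rescue it, because locally there is no difference between the planar and non-planar situations.

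What actually pins down the signs is how the a-arcs $a_{v}$ and $a_{w}$ sit relative to the c-circle $\mathcal{C}$ \emph{inside the page} $S_{t}$. The paper's proof works via the movie presentation: choosing $S_{0}$ to minimise the number of c-circles, one sees that $h_{\alpha}$ splits $a_{v}$ into an a-arc and a new c-circle, while $h_{\beta}$ merges $a_{w}$ with an existing c-circle. The crucial use of planarity is that every simple closed curve in $S$ is separating; this forces (after ruling out extraneous c-circles) the configuration to be exactly as in Figure~\ref{fig:movieacac}, and in particular the positions of the describing arcs relative to $\vec n_{A}$ then give $\sgn(R_{\alpha})\neq\sgn(R_{\beta})$. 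Without the separating property, $a_{w}$ can approach $\mathcal{C}$ from the ``wrong'' side (as in Figure~\ref{fig:movieacac2}), producing equal signs. Your argument needs to incorporate this global, page-level input.
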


Using these results, we now prove Theorem \ref{theorem:commondestab}.

\begin{proof}[Proof of Theorem \ref{theorem:commondestab}]

In the following proof, we prove the theorem for the case $\widehat{\alpha}$ and $\widehat{\beta}$ are knots, namely, $A$ is an annulus, since the general link case follows from the component-wise argument.

Let us put $A$ so that it admits an open book foliation. We prove theorem by induction on the number of singular points in $A$. If $A$ contains no singular points, then $\widehat{\alpha}$ and $\widehat{\beta}$ are braid isotopic so the result is trivial.

First assume that $A$ contains c-circles. By Lemma \ref{lemma:essentialc}, c-circles are homotopic to the core of $A$. In particular, $A$ has no cc-pants or cs-annuli, and an ac- or bc- annulus always appears in pairs sharing their c-circle boundaries. 

Then, in a neighborhood of a c-circle, there is a sub-annulus $A' \subset A$ which consists of two degenerated ac-annuli (see Figure \ref{fig:thinannuli}).
Let $\partial A' = \widehat{\alpha}' \cup (-\widehat{\beta}')$. Since c-circle is homotopic to the core of $A$, the sub-annulus $A'$ splits the annulus $A$ into three cobounding annuli $A= A_{\alpha} \cup A' \cup A_{\beta}$, with $\partial A_{\alpha} = \widehat{\alpha} \cup (-\widehat{\alpha}')$, $\partial A_{\beta} = \widehat{\beta}' \cup (-\widehat{\beta})$. By Lemma \ref{lemma:degac}, $n(\widehat{\alpha}')= n(\widehat{\beta}')=1$ and $sl(\widehat{\alpha}')= sl(\widehat{\beta}')$. In particular, $\widehat{\alpha}'$ and $\widehat{\beta}'$ never admits destabilizations.

\begin{figure}[htbp]
\begin{center}
\includegraphics*[bb= 196 639 384 728,width=60mm]{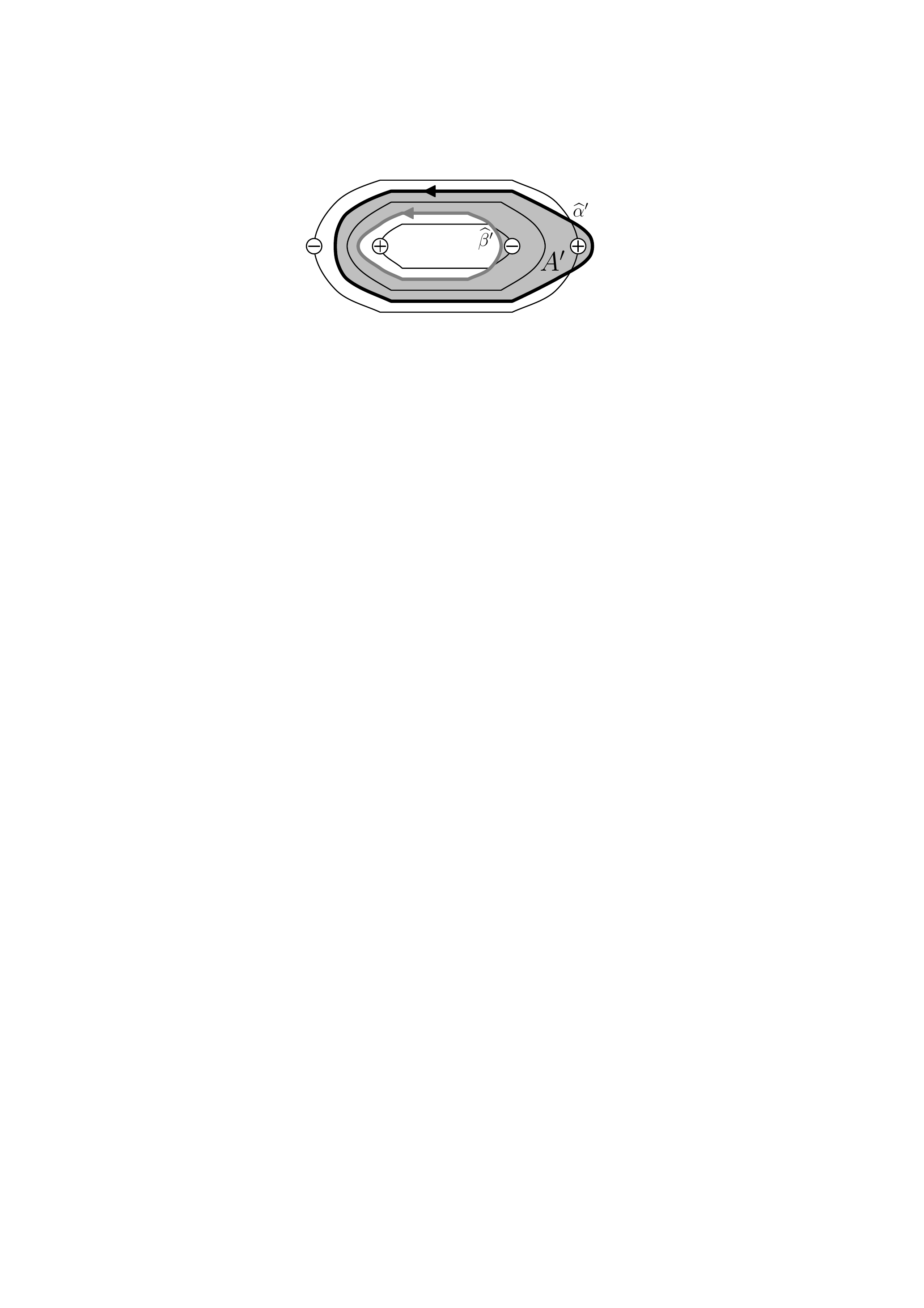}
\caption{If $\F(A)$ contains c-circles, we may find a sub-annulus $A'$ consisting of two degenerated ac-tiles.}
\label{fig:thinannuli}
\end{center}
\end{figure}

Since $A_{\alpha}$ and $A_{\beta}$ are sub-annuli of $A$, the number of singular points of $\F(A_{\alpha})$ and $\F(A_{\beta})$ is strictly smaller than that of $\F(A)$.
Therefore by induction, there exists a closed braid $\widehat{\alpha_{0}}$ (resp. $\widehat{\beta_{0}}$) which is obtained from $\widehat{\alpha}$ (resp. $\widehat{\beta}$) by braid isotopy, destabilizations and exchange moves along $C$, such that
\[ n(\widehat{\alpha_{0}}) = n(\widehat{\alpha}') = 1 =  n(\widehat{\beta}') = n(\widehat{\beta_{0}}), \ \  sl(\widehat{\alpha_{0}}) = sl(\widehat{\alpha}')= sl (\widehat{\beta}') = sl(\widehat{\beta_{0}}). \]
This completes the proof for the case $A$ contains c-circles.

Therefore we will always assume that $\F(A)$ has no c-circles. Then the region decomposition of $A$ only consists of regions which are homeomorphic to 2-cells. By collapsing the boundaries $\widehat{\alpha}$ and $\widehat{\beta}$ to a point $v_{\alpha}$ and $v_{\beta}$ respectively, we get a sphere $\mathcal{S}$, and the region decomposition of $A$ induces a cellular decomposition of $\mathcal{S}$: the 0-cell (vertices) are elliptic points and $v_{\alpha}$ and $v_{\beta}$, and the 1-cells are either a-arc, b-arc or s-arc that are the boundaries of regions, and the 2-cells are aa-, ab-, as-, abs-, or bb-tiles.

Let $V,E$ and $R$ be the number of 0-, 1- and 2-cells. We say an elliptic point $v$  is of \emph{type $(a,b)$} if, in the cellular decomposition, $v$ is the boundary of $a$ 1-cells which are a-arcs and $b$ 1-cells which are b-arcs. Let $V(a,b)$ be the number of elliptic points of $\F(A)$ which are of type $(a,b)$. Then
\begin{equation}
\label{eqn:eul0}
 V= 2 + \sum_{v=1}^{\infty}\sum_{a=0}^{v} V(a,v-a). 
\end{equation}
where the first $2$ comes from 0-cells $v_{\alpha}$ and $v_{\beta}$.

In the cellular decomposition, every 2-cell has four 1-cells as its boundary, and every 1-cell is adjacent to exactly two 2-cells, so $2R=E$ holds. Thus, by combining the equality $V-E+R = \chi(\mathcal{S}) = 2$ we get
\begin{equation} 
\label{eqn:eul1}
4 = 2V - E.
\end{equation}

Next let $E_a$, $E_b$ and $E_s$ be the number of 1-cells which are a-arcs, b-arcs, and s-arcs, respectively. Each a-arc has exactly one elliptic point  as its boundary, and each b-arc has exactly two elliptic points as its boundary, so
\begin{equation} 
\label{eqn:eul2}
E_a = \sum_{v=1}^{\infty}\sum_{a=0}^{v}a V(a,v-a), \  2E_b = \sum_{v=1}^{\infty}\sum_{a=0}^{v}(v-a)V(a,v-a).
\end{equation}

Combining (\ref{eqn:eul0}), (\ref{eqn:eul1}), (\ref{eqn:eul2}) altogether, we get
\[ 0= 2 E_{s} + \sum_{v=1}^{\infty}\sum_{a=0}^{v}(v+a-4)V(a,v-a)\]
By rewriting this equality, we get the \emph{Euler characteristic equality}
\begin{equation}
\label{eqn:euler}
  2V(1,0) + V(1,1) + 2V(0,2) + V(0,3) = 2E_s + V(2,1) + 2V(3,0) + \sum_{v=4}^{\infty} \sum_{a=0}^{v} (v+a-4)V(a,v-a).
\end{equation}

Assume that the right-hand side of (\ref{eqn:euler}) is non-zero, so one of $V(1,0)$, $V(1,1)$, $V(0,2)$ and $V(0,3)$ is non-zero.\\

{\bf Case (i): $V(1,0) \neq 0$.}\\

An elliptic point of type $(1,0)$ is contained in a degenerated aa-tile. Such an elliptic point is removed by destabilization.\\

{\bf Case (ii): $V(1,1) \neq 0 $.}\\

Let $v$ be an elliptic point of type $(1,1)$, and $\varepsilon$ and $\delta$ be the signs of the hyperbolic points around $v$. If $\varepsilon \neq \delta$, then we can remove $v$ by applying the boundary-shrinking exchange move. If $\varepsilon = \delta$, then we can apply b-arc foliation change to reduce the number of hyperbolic points around $v$. As a result, we get an elliptic point of type $(1,0)$ which can be removed by destabilization, as discussed in {\bf Case (i)}.\\

{\bf Case (iii):  $V(0,2) \neq 0 $.}\\

Let $v$ be an elliptic point of type $(0,2)$, and $\varepsilon$ and $\delta$ be the signs of the hyperbolic points around $v$. By Lemma \ref{lemma:key}(2), $\varepsilon \neq \delta$. 
Moreover, $v$ cannot be strongly essential, because otherwise by Lemma \ref{lemma:FDTC} we get $-1 \leq c(\phi,C) \leq 1$, which contradicts {\bf [FDTC]}. Hence we can remove such an elliptic point by an interior exchange move.\\

{\bf Case (iv): $V(0,3) \neq 0$. }\\

Let $v$ be an elliptic point of type $(0,3)$. By Lemma \ref{lemma:key}, around $v$ there must be both positive and negative hyperbolic points. Around $v$ there are three hyperbolic points, so we can find two hyperbolic points of the same sign which are adjacent, so the b-arc foliation change can be applied. After the b-arc foliation change, we get an elliptic point of type $(0,2)$ which can be removed as discussed in {\bf Case (iii)}.\\

By {\bf Case (i)--(iv)} above, if the right-hand side of (\ref{eqn:euler}) is non-zero, then we can reduce the number of singular points of $\F(A)$ hence by induction, we find the desired closed braids.

Therefore we now assume that the right hand side of (\ref{eqn:euler}) is zero.
Thus, $V(1,0)=V(1,1)= V(0,2) =V(0,3) = E_s=0$ and all elliptic points are either of type $V(1,2)$ or $V(0,4)$.

Assume that around some elliptic point $v$ of type $(0,4)$, the sign of hyperbolic points are not alternate. This means that there is a situation where the b-arc foliation change can be applied, and applying the b-arc foliation change, $v$ is changed to be of type $(0,3)$, which can be removed by {\bf Case (iv)}.

Next we look at an elliptic point $v$ of type $(1,2)$. Such $v$ lies in one bb-tile $R_{bb}$ and two ab-tiles $R_{ab}^{1}$, $R_{ab}^{2}$. 
Assume that $\sgn(R_{ab}^{1}) \neq \sgn(R_{ab}^{2})$, or $\sgn(R_{ab}^{1})= \sgn(R_{ab}^{2}) = \sgn(R_{bb})$. Then by b-arc foliation change we get an elliptic point of type $(1,1)$, which can be removed by {\bf Case (ii)}.

Therefore unless the open book foliation $\F(A)$ is in a particular form,  a  tiling with alternate signs (see Figure \ref{fig:standannuli}(i) -- around each elliptic point of type $(0,4)$ the sign of hyperbolic points are alternate and around each elliptic point of type $(1,2)$, $\sgn(R_{ab}^{1})= \sgn(R_{ab}^{2}) \neq \sgn(R_{bb})$), we can reduce the number of singular point of $\F(A)$. 

Thus, we eventually reduced the proof for the case that the cobounding annulus $A$ is tiled with alternate signs.
Let $\varepsilon$ be the sign of ab-tile containing $\widehat{\alpha}$. If $\varepsilon = +$ (resp. $-$), then by negatively (resp. positively) stabilizing $\widehat{\alpha_0}$ we eliminate all negative elliptic points and by negatively (positively) stabilizing $\widehat{\beta}$, we eliminate all positive elliptic points (see Figure \ref{fig:standannuli} (ii)) to get isotopic closed  braids.
The observation that $A$ is tiled with alternate signs implies that the number of necessary stabilizations are the same, so $sl(\widehat{\alpha})= sl(\widehat{\beta})$ and $n(\widehat{\beta}) = n(\widehat{\alpha})$.

\begin{figure}[htbp]
\begin{center}
\includegraphics*[bb= 85 566 514 740,width=140mm]{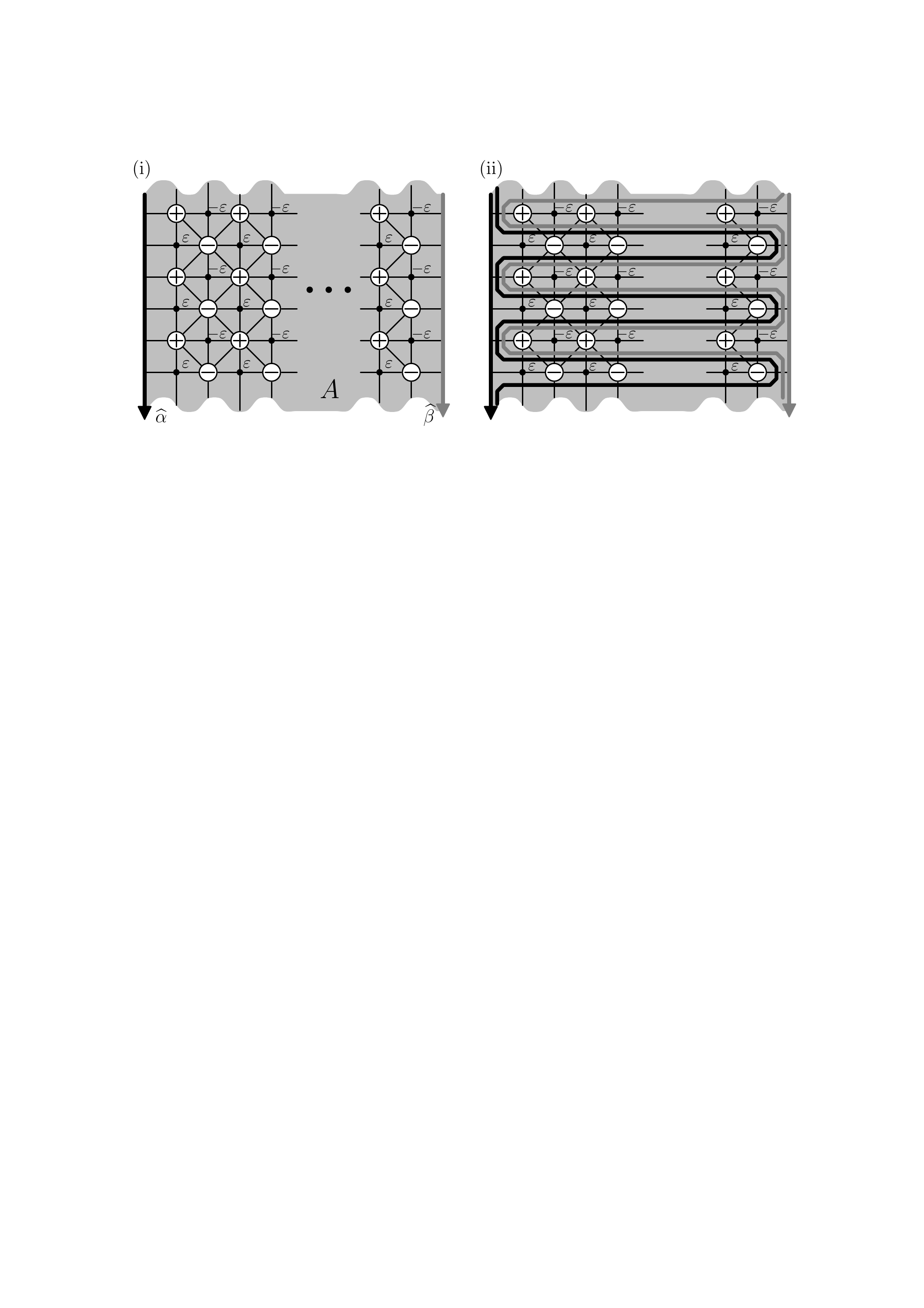}
\caption{(i) Cobounding annuli $A$ with special open book foliation, a tiling with alternate signs. (ii) The boundaries $\widehat{\alpha}$ and $\widehat{\beta}$ become braid isotopic by performing the stabilization of sign $-\varepsilon$ same times.}
\label{fig:standannuli}
\end{center}
\end{figure}
\end{proof}

\begin{remark}
\label{rem:c-circle}
By Proposition \ref{prop:withoutc}, we may always assume that the first cobounding annulus $A$ has no c-circles. However, when we apply the interior exchange move ({\bf Case (iii)}) one may encounter cobounding annulus with c-circles. Such c-circles cannot be eliminated without increasing the number of singular points. This is a reason why we need to treat open book foliation with c-circles. In the braid foliation case, this problem does not occur since one can always remove c-circles without increasing the number of singular points.
\end{remark}

The Jones-Kawamuro conjecture is a direct consequence of Theorem \ref{theorem:stablecobound} and Theorem \ref{theorem:commondestab}.

\begin{proof}[Proof of Theorem \ref{theorem:main}]

Assume to the contrary that there exist closed braids $\widehat{\alpha}$ and $\widehat{\beta}$ which are $C$-topologically isotopic to the same link $L$ violating the inequality (\ref{eqn:themain}):
\[
|sl(\widehat{\alpha})-sl(\widehat{\beta})| > 2( \max\{ n(\widehat{\alpha}), n(\widehat{\beta})\} - b_{C}(L)). \]
 
With no loss of generality, we assume that $sl(\widehat{\alpha}) \geq sl(\widehat{\beta})$.
By Theorem \ref{theorem:stablecobound}, there exists closed braids $\widehat{\alpha_{+}}$ and $\widehat{\beta_{-}}$ that cobound annuli $A$, where $\widehat{\alpha_{+}}$ is a positive stabilizations of $\widehat{\alpha}$, and $\widehat{\beta_{-}}$ is a negative stabilizations of $\widehat{\beta}$ along the distinguished binding component $C$. By taking further negative stabilizations of $\widehat{\beta}$ if necessary, we may assume that $n(\widehat{\beta_{-}}) \geq n(\widehat{\alpha_{+}})$.

Since a positive stabilization preserves the self-linking number whereas one negative stabilization decreases the self-linking number by two, we have
\[ sl(\widehat{\alpha_{+}})-sl(\widehat{\beta_{-}}) = sl(\widehat{\alpha}) - sl(\widehat{\beta}) + 2(n(\widehat{\beta_{-}}) - n(\widehat{\beta})). \]
This shows that $\widehat{\alpha_{+}}$ and $\widehat{\beta_{-}}$ also violate the inequality (\ref{eqn:themain}), namely,
\begin{equation}
\label{eqn:contradict}
|sl(\widehat{\alpha_+})-sl(\widehat{\beta_-})| = sl(\widehat{\alpha_+})-sl(\widehat{\beta_-}) > 2( \max\{n(\widehat{\alpha_+}),n(\widehat{\beta_{-}})\}) - b_{C}(K)=2n(\widehat{\beta_-}) -2b_{C}(L).
\end{equation}

Since $\widehat{\alpha}$ and $\widehat{\beta}$ are $C$-topologically isotopic, the cobounding annuli $A$ between $\widehat{\alpha_{+}}$ and $\widehat{\beta_{-}}$ can be chosen so that the assumption {\bf [C-Top]} in Theorem \ref{theorem:commondestab} is satisfied. Hence by Theorem \ref{theorem:commondestab}, there are closed braids $\widehat{\alpha_{0}}$ and $\widehat{\beta_{0}}$ with $n(\widehat{\alpha_0})=n(\widehat{\beta_0})$ and $sl(\widehat{\alpha_0})=sl(\widehat{\beta_0})$, obtained from $\widehat{\alpha_+}$ and $\widehat{\beta_-}$ by destabilizations and exchange moves along $C$.
Since exchange move preserves the self-linking number we have
\[ 
-2( n(\widehat{\alpha_+}) - n(\widehat{\alpha_0}) ) \leq sl(\widehat{\alpha_+}) -sl(\widehat{\alpha_0}) \leq 0, \ -2( n(\widehat{\beta_+}) - n(\widehat{\beta_0}) ) \leq sl(\widehat{\beta_+}) - sl(\widehat{\beta_0}) \leq 0
\]
Hence
\[ -2( n(\widehat{\alpha_+}) - n(\widehat{\alpha_0}) ) \leq sl(\widehat{\alpha_+}) -sl(\widehat{\beta_-}) \leq 2( n(\widehat{\beta_+}) - n(\widehat{\beta_0})).
\]
This contradicts with (\ref{eqn:contradict}).
\end{proof}

\section{Cobounding annuli with c-circles}
\label{sec:c-circles}
In this section we prove results on cobounding annuli with c-circles used in the previous section.

\begin{proof}[Proof of Lemma \ref{lemma:essentialc}]

The proof is essentially the same as an argument already appeared in \cite[pp. 3016 Case II]{ik3}, the proof of split closed braid theorem for the case that a splitting sphere contains c-circles.

Assume that the cobounding annuli $A$ contain a c-circle which is null-homotopic. 
Take an innermost bc-annulus $R$. Here by `innermost' we mean that the c-circle boundary of $R$ bounds a disc $D \subset A$ with $R \subset D$ so that $D-R$ contains no c-circles. Then either $R$ is degenerate bc-annulus (see Figure \ref{fig:degenerated}), or, the region decomposition of $D-R$ consists only of bb-tiles. 
We prove the lemma by induction of the number of bb-tiles in $D-R$.

First assume that $D-R$ contains no bb-tiles, namely, $R$ is degenerated. Take a binding component $C$ so that one of the elliptic points in $R$ lies on $C$. Then by \cite[Lemma 7.7]{ik3}, $|c(\phi,C)| \leq 1$ which is a contradiction.

Assume that $D-R$ contains $k>0$ elliptic points, and let $v_{\pm}$ be the elliptic points which lie on $R$. Let us consider the 2-sphere $\mathcal{S}$ obtained by gluing two b-arc boundaries of $D-R$. Then the region decomposition of $D-R$ induces a cellular decomposition of $\mathcal{S}$. 

For $i>0$, let $V(i)$ be the number of 0-cells of valence $i$ in the cellular decomposition of $\mathcal{S}$. Then by a similar argument as the equation (\ref{eqn:euler}) in the proof of Theorem \ref{theorem:commondestab}, we have the Euler characteristic equality
\[ 2V(2) + V(3) = 8 + \sum_{i\geq 4}(i-4)V(i). \]
This shows that $\mathcal{S}$ has a 0-cell $v$ (elliptic point) of valence $\leq 3$ which is not $v_{\pm}$.

By applying b-arc foliation change if necessary (thanks to Lemma \ref{lemma:key}, this is always possible) we may assume that $v$ is of valence two (cf. {\bf Case (iv)} in the proof of Theorem \ref{theorem:commondestab}). Then as we have discussed, by interior exchange move we can remove elliptic point $v$. Hence we can reduce the number of elliptic points in $D-R$, so by induction we conclude that a null-homotopic c-circle never exists.
\end{proof}

Next we prove Lemma \ref{lemma:degac}. As we will see in Lemma \ref{lemma:cexam0} and Example \ref{example:c}, Lemma \ref{lemma:degac} does not hold for non-planar open books.

\begin{proof}[Proof of Lemma \ref{lemma:degac}]
Let $A=R_{\alpha} \cup R_{\beta}$, where $R_{\alpha}$ and $R_{\beta}$ are degenerated ac-annuli containing $\widehat{\alpha}$ and $\widehat{\beta}$, respectively, and let $v$ and $w$ be the positive and negative elliptic points in $\F(A)$. (See Figure \ref{fig:twodegac} again). 

Since there are no b- and s-arcs in $\F(A)$, $n(\widehat{\alpha})$ and $n(\widehat{\beta})$ are equal to the number of positive and negative elliptic points so $n(\widehat{\alpha})=n(\widehat{\beta})=1$. We look at the movie presentation of $A$ to determine the closed braids $\widehat{\alpha}$ and $\widehat{\beta}$.

We denote the a-arcs in a page $S_t$ whose endpoints are $v$ and $w$ by $a_v=a_v(t)$ and $a_w =a_{w}(t)$, respectively. 
Take $S_{0}$ so that the number of c-circles in $S_{0}$ is minimal among all $S_{t}$ $(t\in [0,1])$. Then $S_{0} \cap A$ consists of two a-arcs $a_{w}(0), a_v(0)$ and c-circles $c_1,\ldots,c_k$. We denote the c-circle in $S_t$ that corresponds to $c_i$ by $c_i(t)$, or simply by $c_i$.

Take $t_{\alpha},t_{\beta} \in [0,1]$ so that $S_{t_{\alpha}}$ and $S_{t_{\beta}}$ are the singular pages that contain the hyperbolic point $h_{\alpha}$ in $R_{\alpha}$ and $h_{\beta}$ in $R_{\beta}$, respectively. We treat the case $0<t_{\alpha} < t_{\beta} <1$. The case $0< t_{\beta} < t_{\alpha} < 1$ is similar. With no loss of generality, we may assume that $0 < t_{\alpha} < \frac{1}{2} < t_{\beta} <1$.

Let us look at what will happen as $t$ moves from $0$ to $1$.
Since we have assumed that the number of c-circles in $S_0$ is minimum, the first ac-singular point $h_{\alpha}$ splits the a-arc $a_{v}$ into an a-arc and a new c-circle, say $c_{k+1}$. Similarly, the second ac-singular point in $h_{\beta}$ merges the a-arc $a_w$ and one of c-circles, say $c_{i}$.
Finally, $S_1\cap A$ is identified with $S_0 \cap A$ by the monodromy $\phi:S_1 \rightarrow S_0$.

Recall that every simple closed curve in a planar surface is separating. Take $j \in \{1,\ldots,k\}$ so that $j \neq i$. Since the monodromy $\phi$ preserves $\partial S$, $c_{j}(1)$ is separating implies that $\phi(c_j(1)) = c_{j}(0)$, unless $c_j(1)$ is null-homotopic in $S_1$. However, $\phi(c_j(1)) = c_{j}(0)$ means that a family of curves $c_{j}(t)$ $(t \in [0,1])$ yields an embedded torus, which is absurd. Thus, we conclude we have either

\begin{enumerate}
\item $k=0$, that is, $S_0 \cap A$ consists of two a-arcs $a_{v}$ and $a_{w}$. 
\item All the c-circles $c \in S_t$ are null-homotopic in $S_t$.
\end{enumerate}

In the case (1), $A\cap S_{\frac{1}{2}}$ consists of two a-arcs $a_v, a_w$ and the unique c-circle $\mathcal{C}$. The movie presentation of $A$ is described as follows (see Figure \ref{fig:movieacac}).

\begin{enumerate}
\item[(i)] At $t=0$, we have two a-arcs $a_v$ and $a_w$. Here we write the future  position of c-circle $\mathcal{C}$ by gray, dotted line.
\item[(ii)] As $t$ approaches to $t_{\alpha}$, the arc $a_v(t)$ deforms to enclose the position of c-circle $\mathcal{C}$, and at $t=t_\alpha$, $a_{v}$ forms a hyperbolic point $h_{\alpha}$. At $t=t_{\alpha} + \varepsilon$ for small $\varepsilon>0$, we have an a-arc $a_v$ and a new c-circle $\mathcal{C}$.
\item[(iii),(iv)] As $t$ approaches to $\frac{1}{2}$, the point $\widehat{\alpha}\cap S_t$ moves along $a_{v}(t)$ to go back to the position at $t=0$. As a consequence, the 1-braid $\alpha$ turns around $\mathcal{C}$ once.
\item[(v)] At $t=\frac{1}{2}$, we have two a-arcs $a_v$ and $a_w$, and a c-circle $\mathcal{C}$, which is a separating simple closed curve in $S_{\frac{1}{2}}$.
\item[(vi)] As $t$ approaches to $t_{\beta}$, the arc $a_w(t)$ deforms to approaches the c-circle $\mathcal{C}$, and at $t=t_\beta$, $a_{w}$ and $\mathcal{C}$ forms a hyperbolic point $h_{\beta}$. At $t=t_{\alpha} + \varepsilon$ for small $\varepsilon>0$, c-circle $\mathcal{C}$ disappears.
\item[(vii,viii)]  As $t$ approaches to $1$, the point $\widehat{\beta}\cap S_t$ moves along $a_{w}(t)$ to go back to the position at $t=0$. As a consequence, the 1-braid $\beta$ turns around $\mathcal{C}$ once. Finally, two pages $S_1$ and $S_0$ are identified by the monodromy $\phi$.
\end{enumerate}

Thus in particular, $\mathcal{C}$ is separating implies
\begin{equation}
\label{eqn:critical}
\sgn(R_{\alpha}) \neq \sgn(R_{\beta}).
\end{equation}
By Proposition \ref{prop:sldiff} we conclude $sl(\widehat{\alpha}) = sl(\widehat{\beta})$.

In the case (2), a similar argument shows that both $\widehat{\alpha}$ and $\widehat{\beta}$ are closure of the trivial 1-braid so $sl(\widehat{\alpha}) = sl(\widehat{\beta})=-1$.

\begin{figure}[htbp]
\begin{center}
\includegraphics*[bb=126 459 493 710,width=130mm]{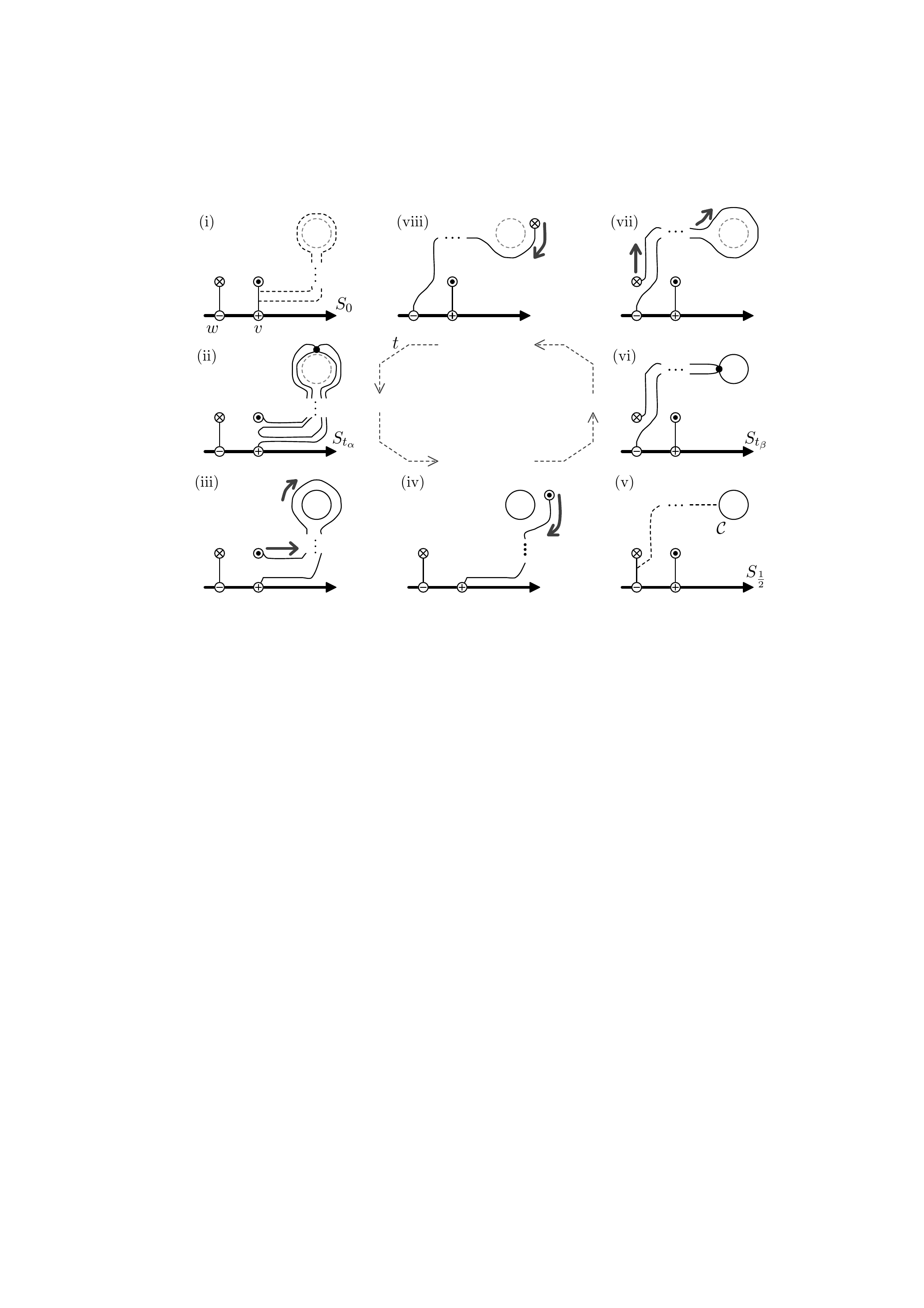}
\caption{Movie presentation of $\F(A)$}
\label{fig:movieacac}
\end{center}
\end{figure}

\end{proof}

As we will see in Example \ref{example:c} in the next section, (\ref{eqn:critical}) does not necessarily hold if a page $S$ is not planar.

\section{Counter examples of Jones-Kawamuro conjecture}
\label{sec:counterexamples}

We close the paper by giving several counter examples that illustrate the necessity of assumptions in Theorem \ref{theorem:main}.

First of all, the following example, coming from our first counterexample Example \ref{exam:counter}, shows that the FDTC assumption {\bf [FDTC]} is necessary and the inequality $>1$ is best-possible: One can not replace the condition $>1$ with $\geq 1$.

\begin{example}[Example \ref{exam:counter}, revisited]
\label{example:a}
Let $A$ be an annulus with boundary $C_1$ and $C_2$, and $T_{A}$ be the right-handed Dehn twist along the core of $A$.

Let us recall the counter example in Example \ref{exam:counter}: A closed 1-braid $\widehat{\alpha}$, the boundary of transverse overtwisted disc, and a closed 1-braid $\widehat{\beta}$, the meridian of $C_1$ in an annulus open book $(A,T_{A}^{-1})$. The binding $\partial A = C_{1} \cup C_2$ forms a negative Hopf link in $S^{3}$. Also note that $c(\phi,C_1) = c(\phi,C_2)=-1$.
 As a link in $S^{3}$, $\widehat{\alpha}$ and $\widehat{\beta}$ are depicted in Figure \ref{fig:counterexam1} (i).

Let $\widehat{\alpha}'$ (resp. $\widehat{\beta}'$) be the positive (resp. negative) stabilization of $\widehat{\alpha}$ (resp. $\widehat{\beta}$) along $C_2$. Then $\widehat{\alpha}'$ and $\widehat{\beta}'$ are $C_1$-topologically isotopic (see Figure \ref{fig:counterexam1} (ii)). On the other hand,
\[ |sl(\widehat{\alpha}') - sl(\widehat{\beta}')| = |1-(-3)| = 4 > 2 = 2( \max \{n(\widehat{\alpha}'), n(\widehat{\beta}') \} -b_{C}(K) ) \]
hence they violate the inequality (\ref{eqn:themain}). 

\begin{figure}[htbp]
\begin{center}
\includegraphics*[bb=168 572 425 736,width=80mm]{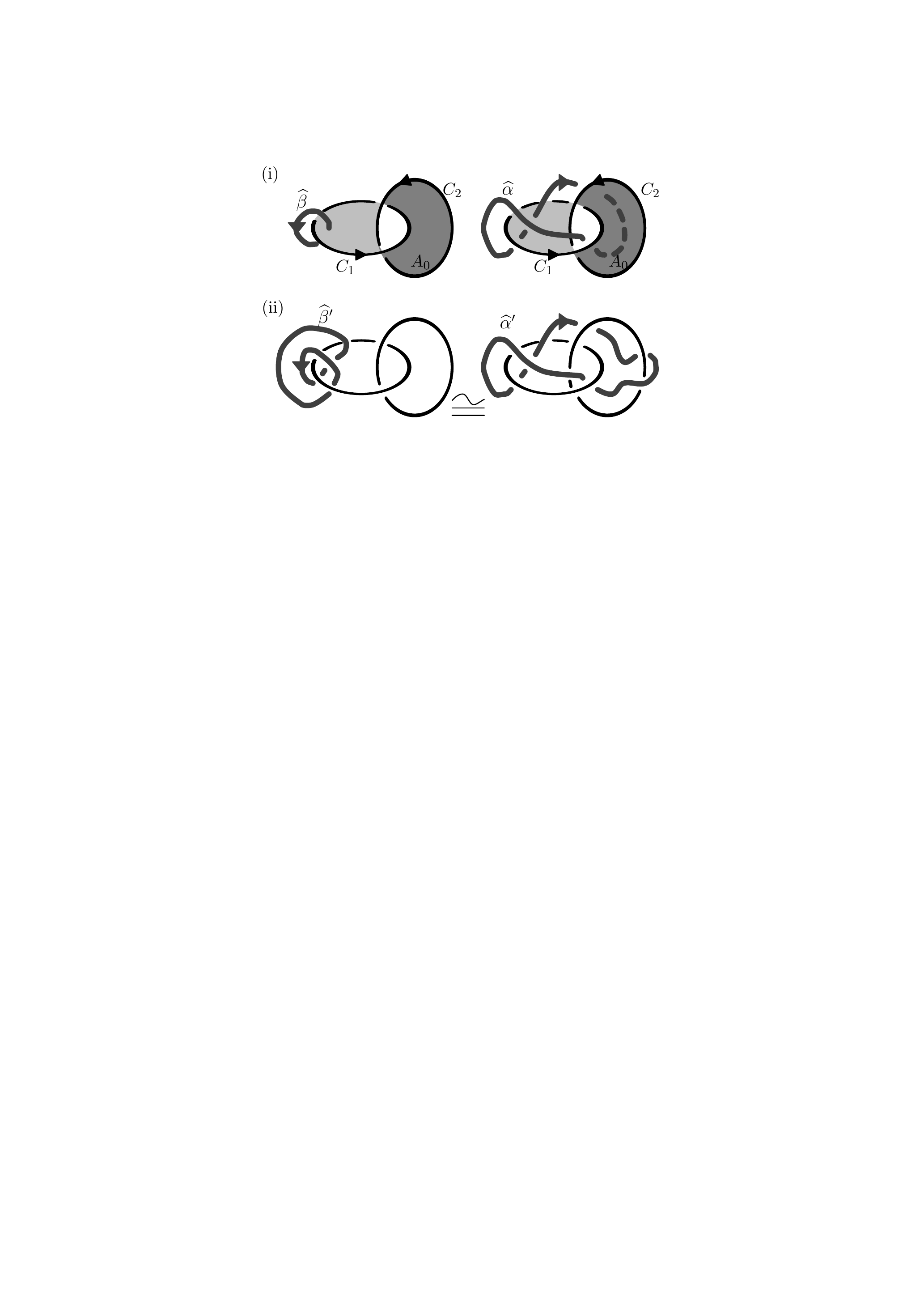}
\caption{Counter example for Jones-Kawamuro conjecture for planar open book with FDTC=1. (i) Closed braids $\widehat{\alpha}$ and $\widehat{\beta}$ in $S^{3}$, (ii) A negative stabilization of $\widehat{\alpha}$ along $C_2$ is $C_1$-topologically isotopic to $\widehat{\beta}$.}
\label{fig:counterexam1}
\end{center}
\end{figure}

\end{example}

The next example shows that the notion of $C$-topologically isotopic is also necessary.

\begin{example}
\label{example:b}
 Let us consider the open book $(A,T_{A}^{2})$, which is an open book decomposition of the unique tight (indeed, Stein fillable) contact structure of $\R P^{3}=L(2,1)$. The FDTCs are $c(T_{A}^{2},C_1) = c(T_{A}^{2},C_2)=2$, so the open book $(A,T_{A}^{2})$ satisfies two assumptions {\bf [Planar]} and {\bf [FDTC]} in Theorem \ref{theorem:main} for both $C_1$ and $C_2$.

Let $\widehat{\alpha} = \partial D$ be a closed braid which is a boundary of a disc $D$, given by the movie presentation in Figure \ref{fig:countermovie}.
From the movie presentation we read that $sl(\widehat{\alpha}) = -5$ and $n(\widehat{\alpha})=2$. On the other hand, let $\widehat{\beta}$ be a closed braid which is a meridian of $C_1$, so $sl(\widehat{\beta})= -1$ and $n(\widehat{\beta})=1$.

\begin{figure}[htbp]
\begin{center}
\includegraphics*[bb=128 540 467 734,width=120mm]{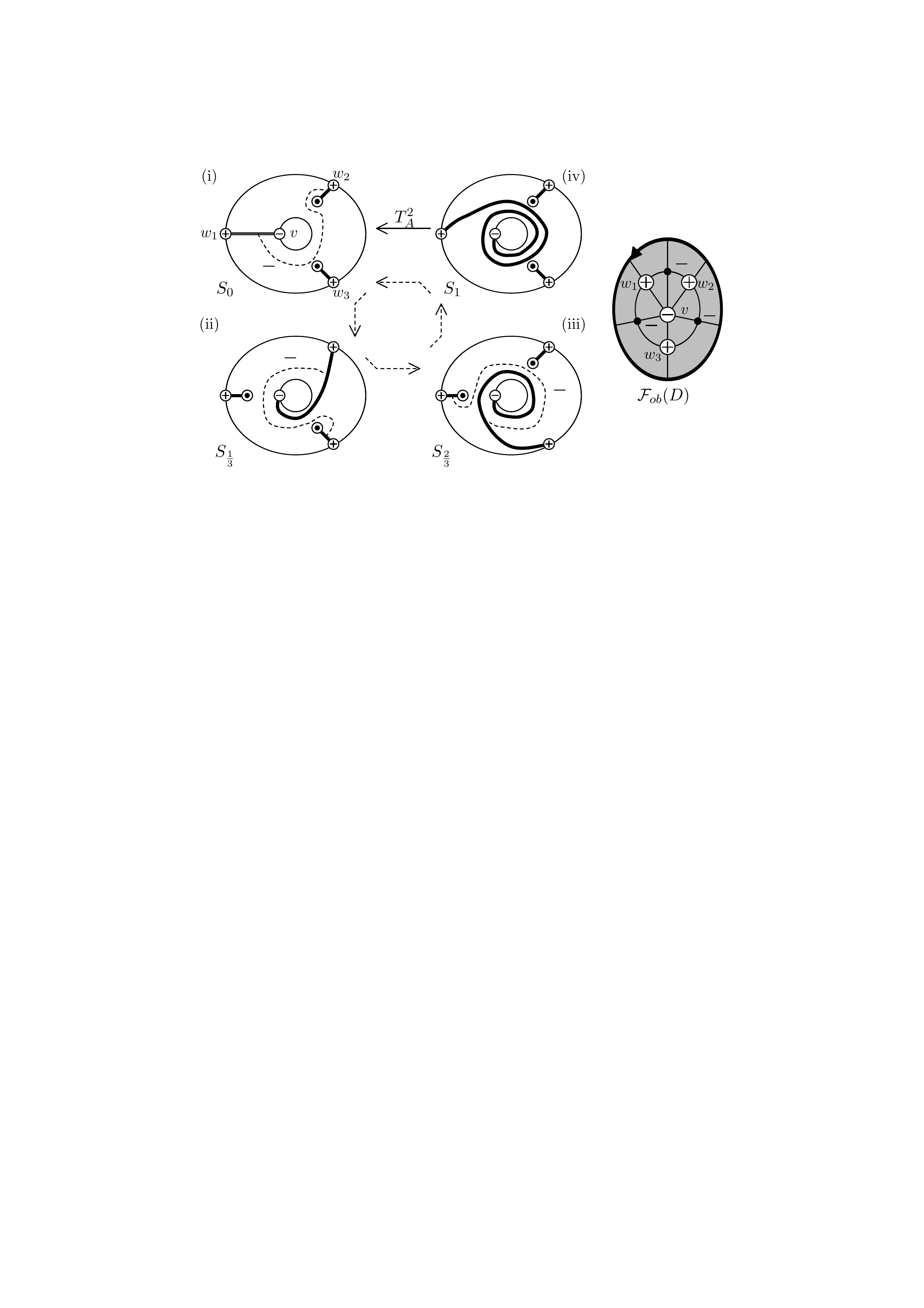}
\caption{A movie presentation of a disc $D$: The last slice (iv) at $t=1$ is identified with the first slice (i) at $t=0$ by the monodromy $T_{A}^{2}$.}
\label{fig:countermovie}
\end{center}
\end{figure}

Both $\widehat{\alpha}$ and $\widehat{\beta}$ are unknots hence they are topologically isotopic. However, 
\[ 4 = |sl(\widehat{\alpha})-sl(\widehat{\beta})| > 2 ( \max\{n(\widehat{\alpha}) ,n(\widehat{\beta})\} -1)  = 2 \]
so they violate the inequality (\ref{eqn:themain}). 

Note that if $\widehat{\alpha}$ and $\widehat{\beta}$ are $C_{1}$-topologically isotopic, then the links $\widehat{\alpha} \cup C_{2}$ and $\widehat{\beta} \cup C_2$ must be isotopic in $M_{(A,T_{A}^{2})}= \R P^{3}$, hence their linking number must be the same.
However, 
\[ 3= lk(\widehat{\alpha},C_1) \neq lk(\widehat{\alpha},C_1) = 1, \ \ -1= lk(\widehat{\alpha},C_2) \neq lk(\widehat{\beta},C_2) = 0.\]
hence $\widehat{\alpha}$ and $\widehat{\beta}$ are neither $C_1$-topologically isotopic nor $C_{2}$-topologically isotopic,

\end{example}

Actually as the next proposition shows, similar counter examples are quite ubiquitous. This shows that in Theorem \ref{theorem:main} the minimal $C$-braid index $b_{C}(K)$ cannot be replaced with the usual minimal braid index $b(K)$, the minimum number of strands needed to represent $K$ as a closed braid in $M_{(S,\phi)}$.

\begin{proposition}
\label{prop:counter}
Let $S$ be a (not necessarily planar) surface with more than one boundary components. For arbitrary open book $(S,\phi)$ with $\phi \neq \textsf{Id}$, there are two closed braids $\widehat{\alpha}$ and $\widehat{\beta}$ in $M_{(S,\phi)}$ which represents the unknot (hence they are topologically isotopic) but they violate the inequality (\ref{eqn:themain}). 
\end{proposition}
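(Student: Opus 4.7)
The strategy is to mimic Examples \ref{example:b} and \ref{example:movie}. Fix $\widehat{\beta}$ to be the meridian of a chosen binding component $C_{1}$: this is a closed $1$-braid, topologically the unknot, with $n(\widehat{\beta})=1$ and $sl(\widehat{\beta})=-1$. Since $b(\text{unknot})=1$, to violate (\ref{eqn:themain}) it suffices to produce a closed braid $\widehat{\alpha}$, topologically the unknot, with some bounded braid index $n_{0}$ and $sl(\widehat{\alpha})<-(2n_{0}-1)$; in particular if $\widehat{\alpha}$ is a closed $1$-braid, any $sl(\widehat{\alpha})\neq -1$ is enough.

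I would construct $\widehat{\alpha}$ as the boundary of an embedded topological disc $D\subset M_{(S,\phi)}$ via an explicit movie presentation in the spirit of Example \ref{example:movie}. The non-triviality of $\phi$ splits into two cases. \emph{Case (i):} some binding component $C$ carries $c(\phi,C)\neq 0$. Then, inside an annular collar of $C$ on which $\phi$ is conjugate to a non-zero power of the boundary Dehn twist, the movie of Example \ref{example:movie} applies almost verbatim, producing a closed $1$-braid $\widehat{\alpha}$ whose disc has several negative hyperbolic tangencies; by Proposition \ref{prop:slform} one obtains $sl(\widehat{\alpha})\leq -3$, and $|sl(\widehat{\alpha})-sl(\widehat{\beta})|\geq 2>0=2(\max\{n(\widehat{\alpha}),n(\widehat{\beta})\}-1)$. \emph{Case (ii):} all FDTCs of $\phi$ vanish but $\phi\neq \textsf{Id}$ in the mapping class group. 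Then, using $|\partial S|\geq 2$, one can choose a properly embedded arc $a\subset S$ with $\phi(a)$ not properly isotopic to $a$ rel $\partial$. A spiralling movie built from the sequence $a,\phi(a),\phi^{2}(a),\ldots,\phi^{N}(a)$ yields an embedded disc of bounded braid index with at least $N$ negative hyperbolic tangencies, and Proposition \ref{prop:slform} makes $sl(\widehat{\alpha})$ as negative as required by choosing $N$ large.

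The main technical obstacle is Case (ii): when $\phi$ acts trivially in every boundary collar, the disc cannot be localized near $\partial S$, and one must verify that the describing arcs in the movie can be arranged so that the resulting surface is an embedded (not merely immersed) disc and the signs of its hyperbolic tangencies are predominantly negative. The hypothesis $|\partial S|\geq 2$ enters essentially here, through the existence of inter-boundary arcs moved non-trivially by $\phi$. Once $\widehat{\alpha}$ is produced, both $\widehat{\alpha}$ and $\widehat{\beta}$ bound embedded discs in $M_{(S,\phi)}$ and are therefore topologically isotopic unknots, so the strict inequality $|sl(\widehat{\alpha})-sl(\widehat{\beta})|>2(\max\{n(\widehat{\alpha}),n(\widehat{\beta})\}-b(\text{unknot}))$ completes the violation of (\ref{eqn:themain}).
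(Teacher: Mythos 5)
Your overall strategy---bound the unknot by an embedded disc described by a movie presentation, read off $sl$ from Proposition \ref{prop:slform}, and compare with a meridian of the binding---is the same as the paper's at a high level, but there is a genuine gap at exactly the point you flag: the existence, for \emph{every} $\phi \neq \textsf{Id}$, of an embedded disc with controlled foliation is not established by your argument. The paper does not build this disc from scratch either; it invokes the construction of \cite[Theorem 2.4]{ik1-2}, which produces an embedded disc $D$ with one negative elliptic point on $C_1$, $n\geq 2$ positive elliptic points on $C_2$, and $n$ ab-tiles all of the same sign $\varepsilon$, where $\varepsilon=+$ or $-$ according as $\phi$ is not right-veering or right-veering at $C_1$. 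Your Case (i) does not work as stated: $c(\phi,C)\neq 0$ does not mean $\phi$ restricts to a power of a boundary Dehn twist on a collar of $C$, and the disc of Example \ref{example:movie} cannot be localized to such a collar in any case, because its b-arcs must end on binding components while the inner boundary of a collar is not part of the binding. Your Case (ii) is, as you admit, only a sketch of the hard step.

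A second, more structural problem is the sign of the hyperbolic points. You reduce the task to producing $\widehat{\alpha}$ with $sl(\widehat{\alpha})<1-2n(\widehat{\alpha})$, i.e.\ you assume the tangencies can be made predominantly negative. But when $\phi$ is not right-veering at $C_1$, the disc supplied by the construction has all \emph{positive} hyperbolic points, so $sl(\widehat{\alpha})=-(n-1)+n=+1$, and negatively stabilizing $\widehat{\alpha}$ cannot help: each such stabilization changes $sl$ by $-2$ and $n$ by $+1$, leaving both sides of (\ref{eqn:themain}) unchanged. The paper handles this case by instead negatively stabilizing the meridian $\widehat{\beta}$ exactly $(n-2)$ times along $C_2$, which drives $sl(\widehat{\beta})$ down to $3-2n$ while keeping $\max\{n(\widehat{\alpha}),n(\widehat{\beta})\}=n-1$, giving $2n-2>2n-4$. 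Your reduction covers only the right-veering case; to complete the proof you need both the cited disc construction (with its sign dichotomy) and this complementary trick of stabilizing $\widehat{\beta}$ when the disc's hyperbolic points are positive.
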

\begin{proof}
Take two different boundary components $C_1$ and $C_2$ of $S$.
By applying a construction in \cite[Theorem 2.4]{ik1-2}, if $\phi \neq \textsf{Id}$, one gets an embedded disc $D$ admitting open book foliation with the following properties (see Figure \ref{fig:cexamD}).
\begin{enumerate}
\item $\F(D)$ has unique negative elliptic point $v$ which lies on $C_1$ and $n(>1)$ positive elliptic points $w_1,\ldots,w_n$ $(n\geq 2)$ which lie on $C_2$.
\item The region decomposition of $\F(D)$ consists of $n$ ab-tiles of the same sign $\varepsilon$. $\varepsilon = +$ (resp. $\varepsilon = -$) if $\phi$ is not right-veering (resp. right-veering) at $C_1$.  
\end{enumerate}

\begin{figure}[htbp]
\begin{center}
\includegraphics*[bb=216 628 380 727,width=50mm]{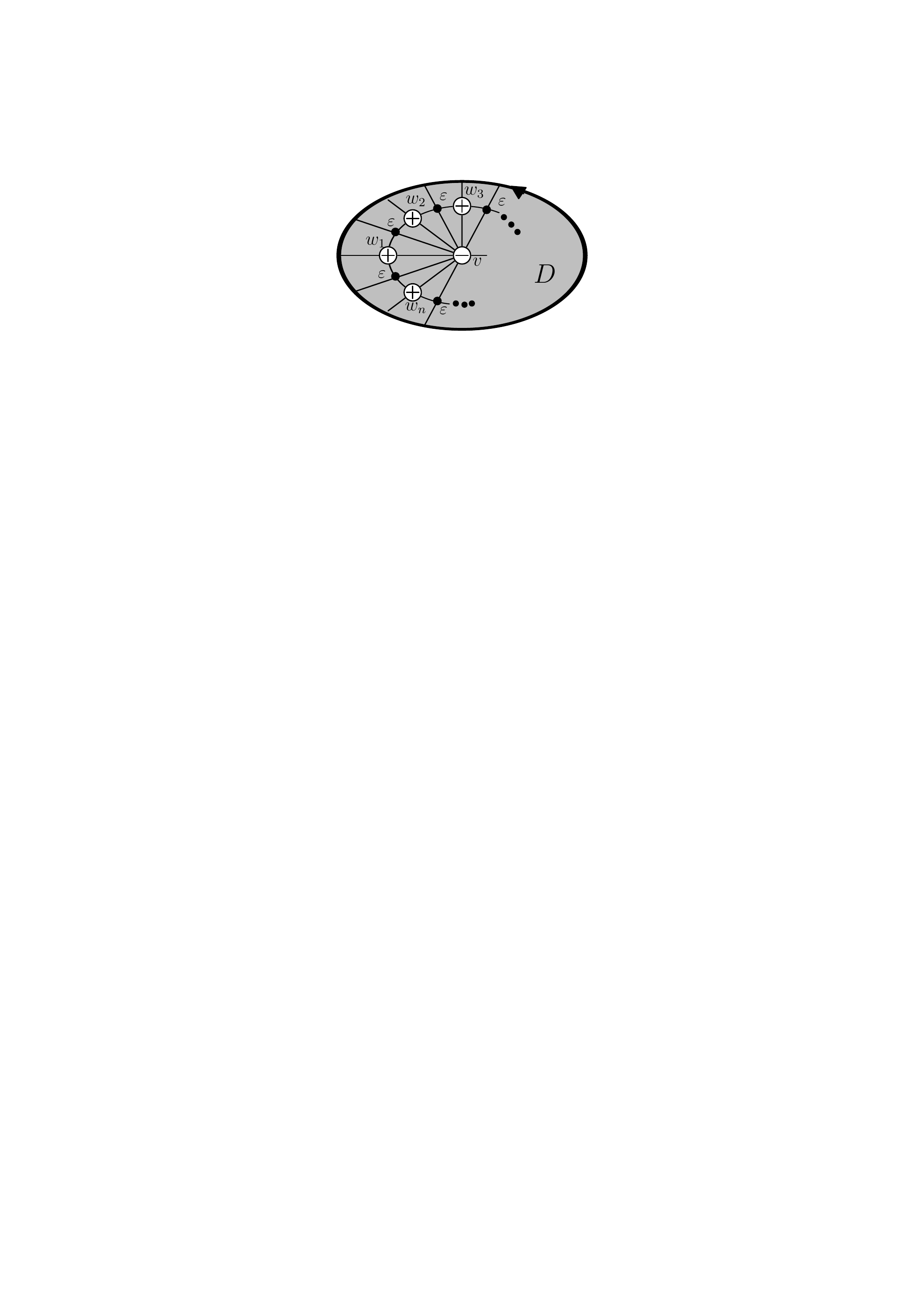}
\caption{The disc $D$ and its open book foliation. In the case $\varepsilon = +$, $D$ is a transverse overtwisted disc.}
\label{fig:cexamD}
\end{center}
\end{figure}
Let $\widehat{\alpha}= \partial D$. Then $n(\widehat{\alpha})=n-1$ and $sl(\widehat{\alpha}) = -(n-1) + \varepsilon n$.

In the case $\varepsilon = +$, let $\widehat{\beta}$ be a closed $(n-1)$-braid which is obtained from the meridian of $C_1$ by negatively stabilizing $(n-2)$ times along $C_2$.
Then $sl(\widehat{\beta})= 3-2n$ and $n(\widehat{\beta})= n-1$, hence they violate the inequality (\ref{eqn:themain}),
\[ 2n-2= |sl(\widehat{\alpha})-sl(\widehat{\beta})| > 2 ( \max\{n(\widehat{\alpha}) ,n(\widehat{\beta})\} -1)  = 2n-4. \]

In the case $\varepsilon = -$, let $\widehat{\beta}$ be a closed $1$-braid which is a meridian of $C_1$. Then $sl(\widehat{\beta})= -1$ and $n(\widehat{\beta})= 1$, hence they violate the inequality (\ref{eqn:themain}),
\[ 2n-2= |sl(\widehat{\alpha})-sl(\widehat{\beta})| > 2 ( \max\{n(\widehat{\alpha}) ,n(\widehat{\beta})\} -1)  = 2n-4. \]

As in Example \ref{example:b}, one can check that $\widehat{\alpha}$ and $\widehat{\beta}$ are not $C$-isotopic for any every boundary component $C$ of $S$, by looking at the linking number with $C_1$ and $C_2$.

\end{proof}

To illustrate the necessity of planarity, we give a counter example of the property (\ref{eqn:critical}) appeared in the proof Lemma \ref{lemma:degac}.

\begin{lemma}
\label{lemma:cexam0}
Let $S$ be non-planar surface. Then for arbitrary monodromy $\phi$, there exist closed 1-braids $\widehat{\alpha}$ and $\widehat{\beta}$ and a cobounding annulus $A$ between them in $M_{(S,\phi)}$, such that
\begin{enumerate}
\item The region decomposition of $A$ consists of two degenerated ac-annuli $R_{\alpha}$ and $R_{\beta}$ (see Figure \ref{fig:twodegac} again).
\item $\sgn(R_{\alpha}) = \sgn(R_{\beta})$.
\end{enumerate}
\end{lemma}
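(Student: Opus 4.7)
The plan is to build an explicit example. Since $S$ is non-planar, fix a non-separating simple closed curve $\mathcal{C}\subset S$; this will play the role of the c-circle in the middle slice of the cobounding annulus, mimicking the construction in the proof of Lemma \ref{lemma:degac}. The crucial point is that the sign constraint (\ref{eqn:critical}) derived in the planar case relied on $\mathcal{C}$ being \emph{separating}, so when $\mathcal{C}$ is non-separating one expects the extra freedom to achieve $\sgn(R_{\alpha})=\sgn(R_{\beta})$ rather than the opposite-signs conclusion.

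Concretely, I would first choose two points $v,w\in\partial S$ (to serve as the positive and negative elliptic points, possibly on different binding components) together with disjoint arcs $a_v,a_w$ in $S$ ending near $\mathcal{C}$, arranged so that \emph{both} arcs approach $\mathcal{C}$ from the same local side. Because $S\setminus\mathcal{C}$ is connected (as $\mathcal{C}$ is non-separating), such a configuration is available regardless of where $v,w$ lie, whereas in the planar setting of Figure \ref{fig:movieacac} the separating $\mathcal{C}$ forces $a_v,a_w$ to approach $\mathcal{C}$ from opposite sides. I would then assemble the movie of $A$ in the style of Figure \ref{fig:movieacac}: set $A\cap S_{1/2}=a_v\cup a_w\cup \mathcal{C}$, introduce a saddle $h_\alpha$ at some $t_\alpha<1/2$ merging $a_v$ with $\mathcal{C}$ into a single a-arc, and symmetrically a saddle $h_\beta$ at some $t_\beta>1/2$ merging $a_w$ with $\mathcal{C}$. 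The closed 1-braids $\widehat{\alpha},\widehat{\beta}$ are obtained by tracking the endpoints of $a_v,a_w$ through the movie and closing up via $\phi$. Since the isotopies of the slices on $t\in[0,t_\alpha]$ and $t\in[t_\beta,1]$ are otherwise unconstrained, the slice at $t=1$ can be matched to $\phi^{-1}(A\cap S_0)$ for any monodromy $\phi$, giving a well-defined embedded annulus in $M_{(S,\phi)}$.

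The main obstacle is verifying the sign equality $\sgn(h_\alpha)=\sgn(h_\beta)$. Each sign is determined by whether the positive normal to $A$ at that saddle agrees with the fibration direction, and the positive normal is fixed by the orientation of $A$ coming from $\partial A=\widehat{\alpha}\cup(-\widehat{\beta})$. With $a_v,a_w$ on the same local side of $\mathcal{C}$, the describing arcs of the two saddles lie on the same side of $\mathcal{C}$ and induce normals pointing in the same $t$-direction, forcing equal signs; in the planar case the arcs on opposite sides of $\mathcal{C}$ produce oppositely pointing normals, which is exactly what underlies (\ref{eqn:critical}). Making this local orientation bookkeeping precise — in particular, tracking the relationship between the chosen local side of $\mathcal{C}$, the direction of the describing arc, and the induced normal $\vec n_A$ at each of $h_\alpha,h_\beta$ — is the technical heart of the argument.
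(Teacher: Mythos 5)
Your construction is essentially the paper's: the proof there also exhibits an explicit movie presentation modeled on the one from Lemma \ref{lemma:degac}, with the c-circle taken non-separating so that the second describing arc can be placed on the side of $\mathcal{C}$ that yields $\sgn(R_{\beta})=\sgn(R_{\alpha})$ (Figure \ref{fig:movieacac2}). The orientation bookkeeping you flag as the technical heart is handled in the paper simply by drawing the describing arc in the relevant slice, so your proposal matches the intended argument.
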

\begin{proof}
We give such a cobounding annulus $A$ by movie presentation. Here we give an example $\sgn(R_{\alpha}) = \sgn(R_{\beta})=+$. An example of $\sgn(R_{\alpha}) = \sgn(R_{\beta})=-$ is obtained similarly. See Figure \ref{fig:movieacac2}. Note that the movie is quite similar to Figure \ref{fig:movieacac}, and the main difference is the slice (v), where the description arc of the hyperbolic point shows $R_{\beta} = +$.

\begin{figure}[htbp]
\begin{center}
\includegraphics*[bb= 116 481 486 730,width=130mm]{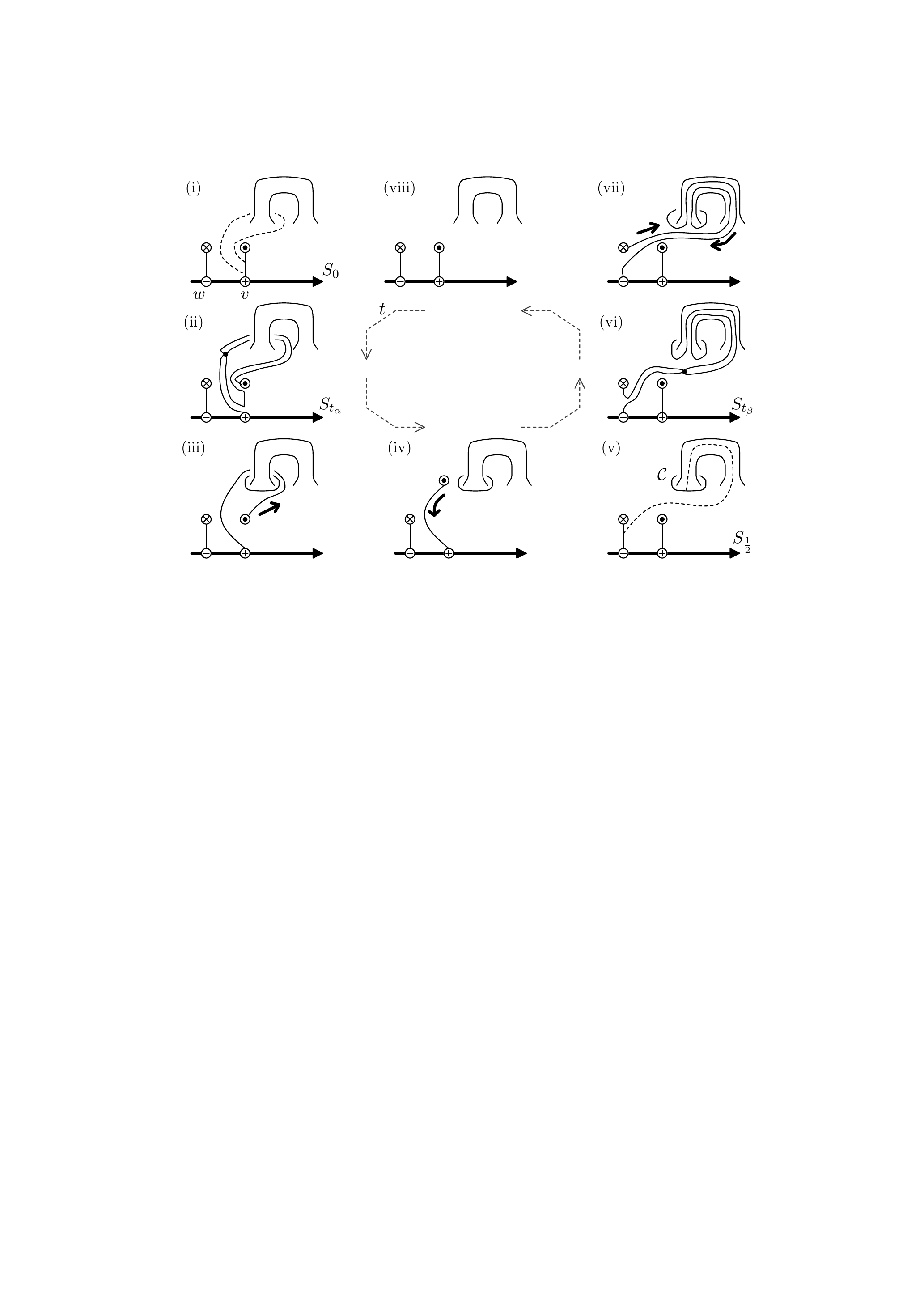}
\caption{Movie presentation of a cobounding annulus $A$ consisting of two degenerated ac-annuli with the \emph{same} sign (compare with Figure \ref{fig:movieacac}) }
\label{fig:movieacac2}
\end{center}
\end{figure}

\end{proof}

A cobounding annulus $A$ in Lemma \ref{lemma:cexam0} gives a counter example of the  Jones-Kawamuro conjecture for non-planar open books.

\begin{example}
\label{example:c}
Let $S$ be the once-hold surface of genus $>0$ and take a monodromy $\phi$ so that $M_{(S,\phi)}$ is an integral homology sphere, with $|c(\phi,\partial S)| >1$.

Let $\widehat{\alpha}$ and $\widehat{\beta}$ be closed 1-braidsgiven by the movie presentation Figure \ref{fig:movieacac2} in Lemma \ref{lemma:cexam0}.
Then $\widehat{\alpha}$ and $\widehat{\beta}$ are $\partial S$-topologically isotopic and null-homologous in $M_{(S,\phi)}$, but by Proposition \ref{prop:sldiff}
 \[ sl(\widehat{\alpha})-sl(\widehat{\beta}) = 2 > 2(\max \{n(\widehat{\alpha},n(\widehat{\beta})\} -1) = 0. \]

\end{example}

\section*{Acknowledgements}
The author gratefully thanks Keiko Kawamuro. Without a series of collaborations with her, he cannot write the present paper, and this work is philosophically a continuation of a joint work with her. He also thanks Inanc Baykur, John Etnyre, Jeremy Van Horn-Morris for sharing Example \ref{exam:counterBEHK}.
The author was partially supported by JSPS KAKENHI
Grant Numbers 25887030 and 15K17540.

\end{document}